\theoremstyle{plain}
\newtheorem{thm}{\protect\theoremname}
\theoremstyle{plain}
\newtheorem{defn}{\protect\definitionname}
\theoremstyle{plain}
\newtheorem{lem}{\protect\lemmaname}
\theoremstyle{plain}
\newtheorem{cor}{\protect\corollaryname}
\theoremstyle{remark}
\theoremstyle{remark}
\newtheorem{rem}{\protect\remarkname}
\newtheorem{property}{Property}
\newcommand{\bone}{\mathbbm{1}}
\providecommand{\claimname}{Claim}
\providecommand{\corollaryname}{Corollary}
\providecommand{\definitionname}{Definition}
\providecommand{\lemmaname}{Lemma}
\providecommand{\remarkname}{Remark}
\providecommand{\theoremname}{Theorem}
\providecommand{\claimname}{Claim}
\providecommand{\corollaryname}{Corollary}
\providecommand{\definitionname}{Definition}
\providecommand{\lemmaname}{Lemma}
\providecommand{\remarkname}{Remark}
\providecommand{\theoremname}{Theorem}
\begin{document}
\begin{frontmatter} 
\title{Asymptotics for Strassen's Optimal Transport Problem}

\maketitle
\runtitle{Asymptotics for Strassen's OT Problem}  

\begin{aug} 
\author{\snm{Lei} \fnms{Yu}\thanks{The author was supported by the NSFC grant 62101286 and the Fundamental
Research Funds for the Central Universities of China (Nankai University).} \ead[label=e1]{leiyu@nankai.edu.cn}}

\affiliation{Nankai University}

\address{School of Statistics and Data Science, LPMC \& KLMDASR, 
Nankai University, Tianjin 300071,  
China, 
\printead{e1}}
 
\end{aug}

\begin{abstract}
In this paper, we consider Strassen's version of optimal transport
(OT) problem, which concerns minimizing the excess-cost probability
(i.e., the probability that the cost is larger than a given value)
over all couplings of two given distributions. We derive large deviation,
moderate deviation, and central limit theorems for this problem. Our
proof is based on Strassen's dual formulation of the OT problem, Sanov's
theorem on the large deviation principle (LDP) of empirical measures,
as well as the moderate deviation principle (MDP) and central limit
theorems (CLT) of empirical measures. In order to apply the LDP, MDP,
and CLT to Strassen's OT problem, nested formulas for Strassen's OT
problem are derived. Based on these nested formulas and using a splitting
technique, we construct asymptotically optimal solutions to Strassen's
OT problem and its dual formulation. 
\end{abstract}

\begin{abstract}
Dans cet article, nous considérons la version de Strassen du problème
de transport optimal (OT), qui concerne la minimisation de la probabilité
de surcoût (c'est-à-dire la probabilité que le coût soit supérieur
à une valeur donnée) sur tous les couplages de deux distributions
données. Nous obtenons  des théorèmes de grande déviation, de déviation
modérée et de limite centrale pour ce problème. Notre preuve est basée
sur la formulation duale de Strassen 
du problème OT, le théorème
de Sanov sur le principe de grande déviation (LDP) des mesures empiriques,
ainsi que le principe de déviation modérée (MDP) et les théorèmes
centraux limites (CLT) des mesures empiriques. Afin d'appliquer les
LDP, MDP et CLT au problème OT de Strassen, des formules imbriquées
pour le problème OT de Strassen sont établies. 
Sur la base de ces
formules imbriquées et en utilisant une technique de division, nous
construisons des solutions asymptotiquement optimales au problème
OT de Strassen et à sa formulation duale.  
\end{abstract}

\begin{keyword}[class=MSC] \kwd[Primary ]{60F10} \kwd{60F05}
\kwd[; secondary ]{49Q22} \end{keyword}

\begin{keyword} Optimal Transport, Large Deviation, Moderate Deviation,
Central Limit Theorem \end{keyword}

\end{frontmatter} \tableofcontents{}

\section{Introduction}

\label{sec:Introduction}

The theory of optimal transport (OT) has been studied for a long history
due to its importance to related problems in physics, mathematics,
economics and other areas; see e.g. \cite{monge1781memoire,kantorovich1942translocation,rachev1998mass,villani2003topics,villani2008optimal}.
Recently, OT theory has been applied increasingly in computer science,
mathematical imaging, machine learning, and information theory \cite{rubner2000earth,sommerfeld2018inference,yu2018asymptotic,schmitz2018wasserstein,del1999central,lopez2018generalization}.
The OT problem was introduced by Monge \cite{monge1781memoire} and
Kantorovich \cite{kantorovich1942translocation}, who defined it as
the problem of minimizing the \emph{expectation of a cost function}
over all couplings of two given distributions. Let $(\mathcal{X},\tau_{1})$
and $(\mathcal{Y},\tau_{2})$ be Polish spaces. Let $\Sigma(\mathcal{X})$
and $\Sigma(\mathcal{Y})$ be respectively the Borel $\sigma$-algebras
on $\mathcal{X}$ and $\mathcal{Y}$ that are generated by the topologies
$\tau_{1}$ and $\tau_{2}$. Let $\mathcal{P}(\mathcal{X})$ and $\mathcal{P}(\mathcal{Y})$
denote the sets of probability measures (or \emph{distributions})
on $\mathcal{X}$ and $\mathcal{Y}$ respectively. Let $P_{X}\in\mathcal{P}(\mathcal{X})$
and $P_{Y}\in\mathcal{P}(\mathcal{Y})$ be two distributions where
the subscripts $X$ and $Y$ indicate which spaces the distributions
are defined on. 
The \emph{coupling set} of $(P_{X},P_{Y})$ is defined as 
\[
\Pi(P_{X},P_{Y}):=\Biggl\{\begin{array}{l}
P_{XY}\in\mathcal{P}(\mathcal{X}\times\mathcal{Y}):\,P_{XY}(A\times\mathcal{Y})=P_{X}(A),\forall A\in\Sigma(\mathcal{X}),\\
\qquad\qquad\qquad\qquad\,P_{XY}(\mathcal{X}\times B)=P_{Y}(B),\forall B\in\Sigma(\mathcal{Y})\:
\end{array}\Biggr\}.
\]
Distributions in $\Pi(P_{X},P_{Y})$ are termed \emph{couplings} of
$(P_{X},P_{Y})$. Let $c:\mathcal{X}\times\mathcal{Y}\to[0,+\infty]$
be lower semi-continuous, which is called \emph{cost function}.

\begin{defn}
The \emph{OT cost }between $P_{X}$ and $P_{Y}$ is defined as\footnote{The existence of the minimizers are well-known; see, e.g., \cite[Theorem 1.3]{villani2003topics}.
Furthermore, when the (joint) distribution of the random variables
involved in an expectation is clear from context, we will omit the
subscript ``$(X,Y)\sim P_{XY}$''. } 
\begin{equation}
\mathcal{E}(P_{X},P_{Y}):=\min_{P_{XY}\in\Pi(P_{X},P_{Y})}\mathbb{E}_{(X,Y)\sim P_{XY}}[c(X,Y)].\label{eq:OT}
\end{equation}
Any $P_{XY}\in\Pi(P_{X},P_{Y})$ attaining $\mathcal{E}(P_{X},P_{Y})$
is called an \emph{OT plan}. 
\end{defn}
The minimization problem in \eqref{eq:OT} is called \emph{Monge--Kantorovich's
OT problem }\cite{villani2003topics}. The functional $(P_{X},P_{Y})\in\mathcal{P}(\mathcal{X})\times\mathcal{P}(\mathcal{Y})\mapsto\mathcal{E}(P_{X},P_{Y})\in[0,+\infty]$
is called the \emph{OT (cost) functional. }If $(\mathcal{X},\tau_{1})=(\mathcal{Y},\tau_{2})$
and $c=d^{p}$ where $p\ge1$ and $d$ is a metric on the Polish space
$(\mathcal{X},\tau_{1})$, then $W_{p}(P_{X},P_{Y}):=(\mathcal{E}(P_{X},P_{Y}))^{1/p}$
is the\emph{ }so-called\emph{ $p$-th Wasserstein distance} between
$P_{X}$ and $P_{Y}$. In \cite{kantorovich1942translocation}, Kantorovich
provided a dual formulation for Monge--Kantorovich's OT problem,
which is known as the \emph{Kantorovich duality theorem} in the literature.
Define the $c$-transform of a function $\phi:\mathcal{X}\to\mathbb{R}$
as $\phi^{c}(y)=\inf_{x\in\mathcal{X}}\phi(x)+c(x,y)$ for all $y\in\mathcal{Y}$.
\begin{thm}[Kantorovich Duality]
\cite[Theorem 5.10]{villani2008optimal} \label{thm:Kantorovich}
It holds that 
\begin{align}
\mathcal{E}(P_{X},P_{Y}) & =\sup_{(\phi,\psi)\in C_{\mathrm{b}}(\mathcal{X})\times C_{\mathrm{b}}(\mathcal{Y}):\phi+\psi\leq c}\int_{\mathcal{X}}\phi\ \mathrm{d}P_{X}+\int_{\mathcal{Y}}\psi\ \mathrm{d}P_{Y}\label{eq:-111}\\
 & =\sup_{\phi\in L^{1}(P_{X})}\int_{\mathcal{Y}}\phi^{c}\ \mathrm{d}P_{Y}-\int_{\mathcal{X}}\phi\ \mathrm{d}P_{X}\label{eq:dual}
\end{align}
where $C_{\mathrm{b}}(\mathcal{X})$ denotes the collection of bounded
continuous functions $\phi:\mathcal{X}\to\mathbb{R}$, and $L^{1}(P_{X})$
denotes the collection of integrable functions $\phi:\mathcal{X}\to\mathbb{R}$
with respect to the distribution $P_{X}$. 
\end{thm}
In 1965, Strassen \cite{strassen1965existence} considered an excess-cost
probability version of the OT problem, in which the \emph{excess-cost
probability}, instead of the expectation, is to be minimized, as shown
in the following definition. Here the excess-cost probability refers
to the probability that the cost function is larger than a given value. 
\begin{defn}
For $\alpha\ge0$, the \emph{optimal excess-cost probability (ECP)
}between $P_{X}$ and $P_{Y}$ with respect to the cost function $c$
is defined as\footnote{Similarly to, and also as a specific case of the last footnote, for
brevity, when the (joint) distribution of the random variables involved
in a probability is clear from context, we will also omit the subscript
``$(X,Y)\sim P_{XY}$''.} 
\begin{equation}
\mathcal{G}_{\alpha}(P_{X},P_{Y}):=\min_{P_{XY}\in\Pi(P_{X},P_{Y})}\mathbb{P}_{(X,Y)\sim P_{XY}}\{c(X,Y)>\alpha\}.\label{eq:strassen}
\end{equation}
Any $P_{XY}\in\Pi(P_{X},P_{Y})$ attaining $\mathcal{G}_{\alpha}(P_{X},P_{Y})$
is called an \emph{optimal ECP plan}. 
\end{defn}
We term the minimization problem in \eqref{eq:strassen} as \emph{Strassen's
OT problem}. In fact, Strassen's OT problem is a $\{0,1\}$-valued
cost version of Monge--Kantorovich's OT problem in which the cost
function is set to the indicator function $(x,y)\mapsto\bone_{c(x,y)>\alpha}$
(rather than $c$ itself). Moreover, $(x,y)\mapsto\bone_{c(x,y)>\alpha}$
is lower semi-continuous since $c$ is lower semi-continuous. Hence,
we write ``minimization'', instead of ``infimization'', in \eqref{eq:strassen}.
Furthermore, in Strassen's OT problem, the optimal ECP reduces to
the total variation (TV) distance if we set $(\mathcal{X},\tau_{1})=(\mathcal{Y},\tau_{2})$,
$\alpha=0$, and set the cost function $c$ to the discrete metric
$(x,y)\mapsto\bone_{x\neq y}$ \cite{villani2003topics}. That is,
for this case, 
\begin{equation}
\mathcal{G}_{0}(P_{X},P_{Y})=\min_{P_{XY}\in\Pi(P_{X},P_{Y})}\mathbb{P}\{X\neq Y\}=\Vert P_{X}-P_{Y}\Vert_{\mathrm{TV}}.\label{eqn:maxcoupling}
\end{equation}
Here $\Vert P-Q\Vert_{\mathrm{TV}}:=\sup_{A}P(A)-Q(A)$ denotes the
TV distance between two distributions $P$ and $Q$ defined on the
same measurable space, where the supremum is taken with respect to
all possible measurable sets $A$. Similarly to Monge--Kantorovich's
OT problem, Strassen's OT problem also admits a dual representation,
which was first given by Strassen \cite{strassen1965existence}. Such
a dual representation can be seen as a particular form (i.e., the
$\{0,1\}$-valued cost version) of the Kantorovich duality theorem.
If $\{(x,y):c(x,y)>\alpha\}=\emptyset$ for given $c$ and $\alpha$,
then, obviously $\mathcal{G}_{\alpha}(P_{X},P_{Y})=0$. To exclude
this trivial case, we make the following assumption.

\assumption[Nonempty] \label{ass:Nonempty-Condition-We} { \emph{We
assume that $\alpha$ is a number such that $\{(x,y):c(x,y)>\alpha\}$
is nonempty.}} 
\begin{thm}[Strassen Duality]
\cite[Theorem 5.4.1]{rachev1998mass}\cite[Corollary 1.28]{villani2003topics}\label{thm:strassen}
Under Assumption \ref{ass:Nonempty-Condition-We}, it holds that 
\begin{align}
\mathcal{G}_{\alpha}(P_{X},P_{Y}) & =\sup_{\textrm{closed }E,F:c(x,y)>\alpha,\forall x\in E,y\in F}P_{X}(E)+P_{Y}(F)-1\label{eq:-19}\\
 & =\sup_{\textrm{compact }E}P_{X}(E)-P_{Y}(\Gamma_{c\le\alpha}(E)),\label{eq:-20}
\end{align}
where for any set $A\subseteq\mathcal{X}$, denote the $\alpha$-enlargement
of $A$ under the cost function $c$ as 
\begin{equation}
\Gamma_{c\le\alpha}(A):=\bigcup_{x\in A}\{y\in\mathcal{Y}:c(x,y)\leq\alpha\}.\label{eq:Gamma}
\end{equation}
\end{thm}
In \eqref{eq:-19}, ``closed $E,F$'' can be replaced by ``measurable
$E,F$'', ``compact $E,F$'' (by inner regularity of probability
measures on Polish spaces), or ``open $E,F$'' (by the  tube lemma
\cite[Lemma 26.8]{munkres2000topology}: if $\mathcal{X}$ is any
topological space and $\mathcal{Y}$ a compact space, then the projection
map ${\displaystyle \mathcal{X}\times\mathcal{Y}\to\mathcal{X}}$
is closed). By the tube lemma, $\Gamma_{c\le\alpha}(E)$ is closed
in $\mathcal{Y}$ for any compact $E\subseteq\mathcal{X}$.  From
\eqref{eq:-20}, given $(P_{X},P_{Y})$, $\mathcal{G}_{\alpha}(P_{X},P_{Y})$
is right-continuous (and obviously non-increasing) in $\alpha$. Furthermore,
by symmetry, it also holds that $\mathcal{G}_{\alpha}(P_{X},P_{Y})=\sup_{\textrm{compact }F}P_{Y}(F)-P_{X}(\Gamma_{c\le\alpha}(F)).$

\subsection{\label{subsec:Main-Result-1}Main Result 1: Large Deviations Principle}

Studying the asymptotic behavior of a sequence of random variables
or probability distributions are central topics in probability theory.
Although the OT theory has been widely studied in the literature,
the asymptotic behaviors of OT problems have been rarely investigated.
This is the major motivation for us to write this paper. In this paper,
we investigate the asymptotic behavior of Strassen's OT problem. To
this end, we need first define the $n$-dimensional Strassen's OT
problem. Denote $\mathcal{X}^{n}$ as the $n$-fold product space
of $\mathcal{X}$. For the product space $\mathcal{X}^{n}\times\mathcal{Y}^{n}$,
we consider an additive cost function $c_{n}$ on $\mathcal{X}^{n}\times\mathcal{Y}^{n}$,
which is given by 
\[
c_{n}(x^{n},y^{n}):=\frac{1}{n}\sum_{i=1}^{n}c(x_{i},y_{i})\quad\mbox{for }(x^{n},y^{n})\in\mathcal{X}^{n}\times\mathcal{Y}^{n},
\]
where $c$ is the cost function given above which is independent of
$n$. 
Obviously, $c_{n}$ is lower semi-continuous since $c$ is lower semi-continuous.
For $\alpha\ge0$, the optimal ECP between the $n$-fold products
of $P_{X}$ and $P_{Y}$ with respect to the cost function $c_{n}$
is 
\begin{equation}
\mathcal{G}_{\alpha}^{(n)}(P_{X},P_{Y}):=\min_{P_{X^{n}Y^{n}}\in\Pi(P_{X}^{\otimes n},P_{Y}^{\otimes n})}\mathbb{P}_{(X^{n},Y^{n})\sim P_{X^{n}Y^{n}}}\{c_{n}(X^{n},Y^{n})>\alpha\}\label{eq:nStrassen}
\end{equation}
where $P_{X}^{\otimes n}$ and $P_{Y}^{\otimes n}$ denote the $n$-fold
products of $P_{X}$ and $P_{Y}$ respectively. The minimization problem
in \eqref{eq:nStrassen} is termed the \emph{$n$-dimensional Strassen's
OT problem}. 
It is easily seen that when $n=1$, $\mathcal{G}_{\alpha}^{(1)}(P_{X},P_{Y})$
reduces to the one-dimensional version $\mathcal{G}_{\alpha}(P_{X},P_{Y})$
in \eqref{eq:strassen}. In this paper, we aim at characterizing the
convergence rate of $\mathcal{G}_{\alpha}^{(n)}(P_{X},P_{Y})$ as
the dimension $n\to\infty$ for given $P_{X},P_{Y}$, $c$, and $\alpha$.
To analyze the asymptotic behavior of Strassen's OT problem, we plan
to leverage existing limit theorems in probability theory. However,
obviously an optimization is involved in Strassen's OT problem and
solving this optimization 
is very difficult in general \cite{villani2003topics}. Hence, it
seems unfeasible to apply limit theorems \emph{directly} to Strassen's
OT problem. To overcome this obstacle, we establish a formula, termed
the \emph{nested formula}, which forms a bridge between Strassen's
OT problem and existing limit theorems. 
Specifically, we observe that the minimization problem in \eqref{eq:nStrassen}
can be decoupled into two nested subproblems: an outer subproblem
and an inner subproblem.

Given $n\geq1$, the \emph{empirical measure} (also known as
\emph{type} for the finite alphabet case) for a sequence $x^{n}\in\mathcal{X}^{n}$
is 
\[
T_{x^{n}}:=\frac{1}{n}\sum_{i=1}^{n}\delta_{x_{i}}
\]
where $\delta_{x}$ is Dirac mass at the point $x\in\mathcal{X}$.
The \emph{empirical joint measure}, denoted by $T_{x^{n},y^{n}}$,
for a pair of sequences $(x^{n},y^{n})\in\mathcal{X}^{n}\times\mathcal{Y}^{n}$
is defined similarly. 
Obviously, empirical measures (or empirical joint measures) for $n$-length
sequences are discrete distributions whose probability values are
multiples of $1/n$. Denote $\mu_{n},\nu_{n}$ as the laws\footnote{Note that $\mu_{n},\nu_{n}$ are the \emph{laws} of empirical
measures, rather than empirical measures themselves.} (distributions) of the empirical measures of $X^{n}\sim P_{X}^{\otimes n}$
and $Y^{n}\sim P_{Y}^{\otimes n}$ respectively. Denote $\Pi(\mu_{n},\nu_{n})$
as the set of couplings of $\mu_{n},\nu_{n}$. Here $\mu_{n},\nu_{n}$
and their couplings are respectively defined on Borel measurable spaces
$\mathcal{P}(\mathcal{X}),\mathcal{P}(\mathcal{Y}),\mathcal{P}(\mathcal{X}\times\mathcal{Y})$
induced by the weak topologies. 
\begin{thm}[Nested Formula for Strassen's OT]
\label{thm:nestedOT} Given $P_{X},P_{Y}$, $c$, and $\alpha$,
under Assumption \ref{ass:Nonempty-Condition-We}, we have 
\begin{align}
\mathcal{G}_{\alpha}^{(n)}(P_{X},P_{Y}) & =\min_{\pi\in\Pi(\mu_{n},\nu_{n})}\pi\{(Q_{X},Q_{Y})\in\mathcal{P}(\mathcal{X})\times\mathcal{P}(\mathcal{Y}):\mathcal{E}(Q_{X},Q_{Y})>\alpha\}\label{eq:nestedOT}\\
 & =\sup_{\textrm{closed }A\subseteq\mathcal{P}(\mathcal{X}),B\subseteq\mathcal{P}(\mathcal{Y}):\mathcal{E}(Q_{X},Q_{Y})>\alpha,\forall Q_{X}\in A,Q_{Y}\in B}\mu_{n}(A)+\nu_{n}(B)-1\label{eq:nesteddual}\\
 & =\sup_{\textrm{compact }A\subseteq\mathcal{P}(\mathcal{X})}\mu_{n}(A)-\nu_{n}(\Gamma_{\mathcal{E}\le\alpha}(A)),\label{eq:nesteddual2}
\end{align}
where $(Q_{X},Q_{Y})\mapsto\mathcal{E}(Q_{X},Q_{Y})$ is the OT functional
given in \eqref{eq:OT} and 
\[
\Gamma_{\mathcal{E}\le\alpha}(A):=\bigcup_{Q_{X}\in A}\{Q_{Y}\in\mathcal{P}(\mathcal{Y}):\mathcal{E}(Q_{X},Q_{Y})\leq\alpha\}.
\]
\end{thm}
The inner subproblem in \eqref{eq:nestedOT} (i.e., the optimization
in the definition of $\mathcal{E}(Q_{X},Q_{Y})$) is nothing but (one-dimensional)
Monge--Kantorovich's OT problem defined in \eqref{eq:OT}, while
the outer subproblem corresponds to a new Strassen's OT problem in
which the marginal distributions $\mu_{n},\nu_{n}$ (respectively
defined on Borel measurable spaces $\mathcal{P}(\mathcal{X}),\mathcal{P}(\mathcal{Y})$
induced by the weak topologies) are the laws of the empirical measures
and the cost function is the OT functional $(Q_{X},Q_{Y})\in\mathcal{P}(\mathcal{X})\times\mathcal{P}(\mathcal{Y})\mapsto\mathcal{E}(Q_{X},Q_{Y})$.

Since $\mu_{n},\nu_{n}$ in the nested formula in \eqref{eq:nestedOT}
are the laws of the empirical measures, given the dimension $n$,
they are concentrated on the set of the empirical measures of $n$-length
sequences. This in turn implies that the cost function in the nested
formula, i.e., the OT functional $(Q_{X},Q_{Y})\mapsto\mathcal{E}(Q_{X},Q_{Y})$,
can be restricted to the set of the empirical joint distributions
of pairs of $n$-length sequences. The \emph{set of empirical measures}
of sequences in $\mathcal{X}^{n}$ is denoted as $\mathcal{P}_{n}(\mathcal{X}):=\{T_{x^{n}}:x^{n}\in\mathcal{X}^{n}\}$
and the \emph{set of empirical joint measures} of pairs of sequences
in $\mathcal{X}^{n}\times\mathcal{Y}^{n}$ is denoted as $\mathcal{P}_{n}(\mathcal{X}\times\mathcal{Y}):=\{T_{x^{n},y^{n}}:(x^{n},y^{n})\in\mathcal{X}^{n}\times\mathcal{Y}^{n}\}$.
We denote $\ell_{1}:x^{n}\in\mathcal{X}^{n}\mapsto T_{x^{n}}$
and $\ell_{2}:y^{n}\in\mathcal{Y}^{n}\mapsto T_{y^{n}}$ as the empirical
measure  functions, and  denote $\ell :(x^{n},y^n)\in\mathcal{X}^{n} \times \mathcal{Y}^{n} \mapsto T_{x^{n},y^{n}}$
 as the joint empirical
measure  function. 
By definition, it is easily verified that  $\ell_{1},\ell$ are continuous with respect to   weak topologies, and hence measurable functions with respect to the
Borel $\sigma$-algebras (induced by weak topologies). Throughout
this paper, we use $T_{X},T_{Y},T_{XY}$ to respectively denote elements in $\mathcal{P}_{n}(\mathcal{X}),\mathcal{P}_{n}(\mathcal{X}),\mathcal{P}_{n}(\mathcal{X}\times \mathcal{Y})$, i.e., empirical
measures on $\mathcal{X},\mathcal{Y},\mathcal{X}\times\mathcal{Y}$
of $n$-length sequences. 
Based on the notations   above, the nested formula in \eqref{eq:nestedOT}
can be rewritten as 
\begin{align}
\mathcal{G}_{\alpha}^{(n)}(P_{X},P_{Y}) & =\min_{\pi\in\Pi(\mu_{n},\nu_{n})}\pi\{(T_{X},T_{Y})\in\mathcal{P}_{n}(\mathcal{X})\times\mathcal{P}_{n}(\mathcal{Y}):\mathcal{E}(T_{X},T_{Y})>\alpha\}.\label{eq:nestedOT2}
\end{align}

The intuition behind Theorem \ref{thm:nestedOT} is as follows. 
 Here we assume $c_{n}$ to be continuous. (Any lower semi-continuous
function can be approximated by a nondecreasing sequence of continuous
functions.) On one hand, the cost function $c_{n}(x^{n},y^{n})$ is
permutation-invariant in the sense that it remains the same if the
coordinate pairs of $(x^{n},y^{n})$ are arbitrarily rearranged. In
other words, $c_{n}(x^{n},y^{n})$ depends on $(x^{n},y^{n})$ via
their empirical joint measure $T_{x^{n},y^{n}}$. On the other hand,
any product distribution $P_{X}^{\otimes n}$ can be also rewritten
as a mixture in the following form\footnote{It can be shown that for any measurable $B$ in $\mathcal{X}^{n}$,
$T_{X}\in\mathcal{P}_{n}(\mathcal{X})\mapsto\mathrm{Unif}(\ell_{1}^{-1}(T_{X}))(B)\in [0,1]$
is measurable, which implies that $(T_{X},B)\mapsto\mathrm{Unif}(\ell_{1}^{-1}(T_{X}))(B)$
is a Markov kernel (or transition probability). This Markov kernel is a   regular conditional distribution of  $P_{X}^{\otimes n}$   since $P_{X}^{\otimes n}$ is exchangeable (or permutation-invariant) and if the regular conditional distribution is not this Markov kernel then taking average of the permutation versions of this regular conditional distribution, we will get this Markov kernel (they are not equal up to a $\mu_n$-null set).}: 
\begin{equation}
P_{X}^{\otimes n}=\varint\mathrm{Unif}(\ell_{1}^{-1}(T_{X}))\mathrm{d}\mu_{n}(T_{X}),
\end{equation}
where $\mathrm{Unif}(\ell_{1}^{-1}(T_{X}))$ denotes the discrete uniform  
distribution on $\ell_{1}^{-1}(T_{X})$, the set of sequences
$x^{n}$ having empirical measure $T_{X}$. These two properties imply
that the minimization in \eqref{eq:nStrassen} can be decomposed into
two sub-minimizations: The inner one is over the couplings (empirical
joint measures) of two marginal empirical measures $T_{X^{n}},T_{Y^{n}}$,
and the outer one is over the couplings of the laws $\mu_{n},\nu_{n}$.
The optimal coupling attaining the minimum in \eqref{eq:nStrassen}
can be expressed as 
\begin{equation}
P_{X^{n}Y^{n}}=\varint\mathrm{Unif}(\ell^{-1}(T_{XY}^{*}(T_{X},T_{Y})))\mathrm{d}\pi^{*}(T_{X},T_{Y}),
\end{equation}
where $T_{XY}^{*}(T_{X},T_{Y})$ is an optimal empirical joint measure
attaining $\mathcal{E}(T_{X},T_{Y})$ such that\footnote{The existence of such a map for a continuous cost function follows
from the measurable selection of optimal plans \cite[Corollary 5.22]{villani2008optimal}.
} $(T_{X},T_{Y})\mapsto T_{XY}^{*}(T_{X},T_{Y})$ is measurable under
the $\sigma$-algebra induced by the weak topology, and $\pi^{*}$
is an optimal coupling attaining the minimum in \eqref{eq:nestedOT}
or \eqref{eq:nestedOT2}. In fact, $\mathrm{Unif}(\ell^{-1}(T_{XY}))$
automatically forms a coupling of $\mathrm{Unif}(\ell^{-1}_1(T_{X}))$
and $\mathrm{Unif}(\ell^{-1}_2(T_{Y}))$ if the empirical joint
measure $T_{XY}$ is a coupling of $T_{X},T_{Y}$.


Combining the nested formulas above with the large deviations principle
(LDP) on empirical distributions (specifically, Sanov's theorem \cite[Theorem 6.2.10]{Dembo}),
we show that $\alpha=\mathcal{E}(P_{X},P_{Y})$ is a phase transition
point: For $\alpha<\mathcal{E}(P_{X},P_{Y})$, we have that $\mathcal{G}_{\alpha}^{(n)}(P_{X},P_{Y})$
converges to one exponentially fast as $n\to\infty$, and for $\alpha>\mathcal{E}(P_{X},P_{Y})$,
$\mathcal{G}_{\alpha}^{(n)}(P_{X},P_{Y})$ converges to zero exponentially
fast. The exponents of these convergences, called \emph{large deviations
(LD) exponents}, are characterized by us in terms of variational formulas
(similarly to the large deviations theory for the empirical mean of
i.i.d. random variables \cite{Dembo}). In order to derive our results,
we require an assumption stronger than Assumption \ref{ass:Nonempty-Condition-We}.
\assumption[Interior-Point] \label{ass:Interior} { \emph{We assume
that $c$ is non-constant and $\alpha$ satisfies that $c_{\inf}<\alpha<c_{\sup}$,
where $c_{\inf}:=\inf_{x,y}c(x,y)$ and $c_{\sup}:=\sup_{x,y}c(x,y)$.}}

Besides Assumption \ref{ass:Interior}, we also need another mild
assumption on the uniform continuity of the OT functional. Since $\mathcal{X}$
is a Polish space, $\mathcal{P}(\mathcal{X})$ equipped with the weak
topology is also Polish \cite[Theorem 6.2 and Theorem 6.5]{parthasarathy2005probability}.
Let $\mathtt{L}_{1}$ be the Lévy--Prokhorov metric on $\mathcal{P}(\mathcal{X})$
given by $\mathtt{L}_{1}(Q_{X}',Q_{X})=\inf\{\delta:Q_{X}'(A)\le Q_{X}(A_{\delta})+\delta,\forall\textrm{ closed }A\subseteq\mathcal{X}\}$,
where given a metric $d$ on $\mathcal{X}$, 
\begin{equation}
A_{\delta}:=\bigcup_{x\in A}\{x'\in\mathcal{X}:d(x,x')<\delta\}\label{eq:enlarge}
\end{equation}
denotes the $\delta$-enlargement of $A$ under the metric $d$. Here
$A_{\delta}$ corresponds to a variant of $\Gamma_{c\le\delta}(A)$
defined in \eqref{eq:Gamma}, in which the inequality sign ``$\le$''
is replaced by the strict one ``$<$'' and the cost function $c$
is set to the metric $d$. It is well known that the Lévy--Prokhorov
metric is compatible with the weak topology. Similarly, let $\mathtt{L}_{2}$
be the Lévy--Prokhorov metric on $\mathcal{P}(\mathcal{Y})$. We
additionally assume that the OT functional is uniformly continuous.

\assumption[Uniform Continuity of OT Functional (UCOTF)] \label{ass:Uniform-Continuity-of}
{ \emph{We assume that the optimal transport functional $(Q_{X},Q_{Y})\in\mathcal{P}(\mathcal{X})\times\mathcal{P}(\mathcal{Y})\mapsto\mathcal{E}(Q_{X},Q_{Y})\in[0,+\infty]$
is uniformly continuous, i.e., 
\[
\lim_{\epsilon\downarrow0}\sup_{Q_{X}',Q_{Y}',Q_{X},Q_{Y}:\mathtt{L}_{1}(Q_{X}',Q_{X}),\mathtt{L}_{2}(Q_{Y}',Q_{Y})\le\epsilon}|\mathcal{E}(Q_{X}',Q_{Y}')-\mathcal{E}(Q_{X},Q_{Y})|=0.
\]
}} 
Given $\mathcal{X}$ and $\mathcal{Y}$, the uniform continuity of
$\mathcal{E}$ is only determined by $c$. The UCOTF is not necessarily
satisfied in general, however, it indeed is satisfied for the following
two cases.\footnote{By Lemma \ref{lem:marginalbound} in Appendix \ref{sec:Basic-Lemmas},
it is easy to show that UCOTF is satisfied for the first case. By
\cite[Corollary 6.13]{villani2008optimal}, UCOTF is satisfied for
the second case. A special instance  of Case 2 is $(\mathbb{R}^{k},d)$
with $d(x,y)=\min\{\Vert x-y\Vert_{q},C\}$ for $q\ge1$ and a constant
$C>0$.} 
\begin{enumerate}
\item (Countable Alphabet and Bounded Cost) $\mathcal{X}$ and $\mathcal{Y}$
are countable sets and $c$ is bounded (i.e., $\sup_{x,y}c(x,y)<\infty$). 
\item (Wasserstein Distance Induced by a Bounded Metric) $\mathcal{X}=\mathcal{Y}$
is a Polish space equipped with a bounded metric $d$, i.e., $\sup_{x,y}d(x,y)<\infty$.
The cost function $c=d^{p}$ for $p\ge1$. For this case, $\mathcal{E}=W_{p}^{p}$. 
\end{enumerate}
For two distributions $P,Q$ defined on the same space, we denote\footnote{Throughout this paper, the base of $\log$ is $e$. }
$D(Q\|P):=\int\log(\frac{\mathrm{d}Q}{\mathrm{d}P})\mathrm{d}Q$ as
the Kullback-Leibler (KL) divergence or relative entropy of $Q$ from
$P$. We now state one of our main results in this paper, namely a
\emph{LDP for Strassen's OT problem}. 
\begin{thm}[LDP for Strassen's OT]
\label{thm:LDP} Under Assumptions \ref{ass:Interior} and \ref{ass:Uniform-Continuity-of},
the following hold. 
\begin{enumerate}
\item For $\alpha<\mathcal{E}(P_{X},P_{Y})$, we have 
\begin{equation}
\lim_{n\to\infty}-\frac{1}{n}\log(1-\mathcal{G}_{\alpha}^{(n)}(P_{X},P_{Y}))=f(\alpha),\label{eq:LD1}
\end{equation}
where 
\begin{align}
f(\alpha) & :=\inf_{Q_{X}\in\mathcal{P}(\mathcal{X}),Q_{Y}\in\mathcal{P}(\mathcal{Y}):\mathcal{E}(Q_{X},Q_{Y})\le\alpha}\max\{D(Q_{X}\|P_{X}),D(Q_{Y}\|P_{Y})\}.\label{eq:f}
\end{align}
\item For $\alpha>\mathcal{E}(P_{X},P_{Y})$, we have 
\begin{equation}
\lim_{\alpha'\uparrow\alpha}g(\alpha')\le\liminf_{n\to\infty}-\frac{1}{n}\log\mathcal{G}_{\alpha}^{(n)}(P_{X},P_{Y})\le\limsup_{n\to\infty}-\frac{1}{n}\log\mathcal{G}_{\alpha}^{(n)}(P_{X},P_{Y})\le g(\alpha),\label{eq:LD2}
\end{equation}
where $g(\alpha):=\min\{g_{P_{X},P_{Y}}(\alpha),g_{P_{Y},P_{X}}(\alpha)\}$
with $g_{P_{X},P_{Y}}(\alpha)$ defined as the infimum of $D(Q_{X}\|P_{X})$
over all $Q_{X}\in\mathcal{P}(\mathcal{X})$ such that 
\begin{equation}
\inf_{Q_{Y}\in\mathcal{P}(\mathcal{Y}):D(Q_{Y}\|P_{Y})\leq D(Q_{X}\|P_{X})}\mathcal{E}(Q_{X},Q_{Y})>\alpha,\label{eq:LD_condition}
\end{equation}
and $g_{P_{Y},P_{X}}(\alpha)$ defined similarly. 
\end{enumerate}
\end{thm}

It is easily verified that 
 the function $g$ in Theorem \ref{thm:LDP}  is right-continuous. 
Furthermore, it is well known that the set of discontinuous points
for a right-continuous function has Lebesgue measure zero. Hence $g$
is continuous almost everywhere, which means that 
\[
\lim_{n\to\infty}-\frac{1}{n}\log\mathcal{G}_{\alpha}^{(n)}(P_{X},P_{Y})=g(\alpha)\textrm{ for almost every }\alpha\in(\mathcal{E}(P_{X},P_{Y}),c_{\sup}).
\]

Theorem \ref{thm:LDP} bears a semblance to the classic LDP for the
empirical mean of i.i.d. random variables; for the latter, see \cite{Dembo}.
This is the reason why we call Theorem \ref{thm:LDP} as a theorem
on the \emph{LDP for Strassen's OT}. Nevertheless, the exponents in
our setting are different from, and more complicated than, those for
the empirical mean of i.i.d. random variables, which is due to the
additional minimization in \eqref{eq:nStrassen} or the one in \eqref{eq:nestedOT}.
Our proof is based on the nested formula in Theorem \ref{thm:nestedOT},
Strassen's dual formulation, and Sanov's theorem. Besides these, some
other specific techniques are also needed in our proof, for example,
the splitting technique \cite{nummelin1978uniform,athreya1978new}.
Furthermore, the relative entropy $D(\cdot\|P)$ is the rate function
for the LDP on the empirical measure, as stated in Sanov's theorem
\cite[Theorem 6.2.10]{Dembo}, which leads to the fact that relative
entropies are involved in the expressions in Theorem \ref{thm:LDP}.

Theorem \ref{thm:LDP} generalizes a result in \cite{yu2018asymptotic}.
In \cite{yu2018asymptotic}, the present author together with Tan
only considered the finite alphabet case. For this case, by using
the method of types, they derived the same expression for the LD exponent
for the case of $\alpha<\mathcal{E}(P_{X},P_{Y})$, but they only
provided a bound for the case of $\alpha>\mathcal{E}(P_{X},P_{Y})$.

\subsection{Intuition of Main Result 1}

In the following, we reveal some insights into the expressions in
Theorem \ref{thm:LDP}, from the perspective of the \emph{primal}
problem for the case of $\alpha<\mathcal{E}(P_{X},P_{Y})$ and from
the perspective of the \emph{dual} problem for the case of $\alpha>\mathcal{E}(P_{X},P_{Y})$.
For brevity, we focus on the case of finite alphabets. 

We first explain Theorem \ref{thm:LDP} for the case of $\alpha<\mathcal{E}(P_{X},P_{Y})$.
For a finite alphabet $\mathcal{X}$, the number of possible empirical
measures of sequences in $\mathcal{X}^{n}$ is polynomial in $n$
(more precisely, which is no larger than $(n+1)^{|\mathcal{X}|}$)
\cite{Csiszar}. This implies that every set $A\subseteq\mathcal{P}(\mathcal{X})$
which contains at least one empirical measure, has a dominant empirical
measure $T_{X}\in A$ for sufficiently large $n$ in the sense that
\begin{equation}
(n+1)^{-|\mathcal{X}|}\mu_{n}(A)\leq\mu_{n}(T_{X})\leq\mu_{n}(A).\label{eq:-62}
\end{equation}
Furthermore, by Sanov's theorem \cite[Theorem 6.2.10]{Dembo}, the
law $\mu_{n}$ of the empirical measure $T_{X^{n}}$ of the i.i.d.
sequence $X^{n}\sim P_{X}^{\otimes n}$ (or $Y^{n}\sim P_{Y}^{\otimes n}$)
satisfies a LDP with the relative entropy $D(\cdot\|P_{X})$ as the
rate function. Hence for a fixed set $A$ not containing $P_{X}$,
the polynomial term $(n+1)^{-|\mathcal{X}|}$ in the left-hand side
of \eqref{eq:-62} is dominated by the term $\mu_{n}(A)$, since $\mu_{n}(A)$
vanishes exponentially fast.

Observe that 
\[
1-\mathcal{G}_{\alpha}^{(n)}(P_{X},P_{Y})=\sup_{\pi\in\Pi(\mu_{n},\nu_{n})}\pi\{(T_{X},T_{Y}):\mathcal{E}(T_{X},T_{Y})\leq\alpha\}
\]
and for any 
$\pi\in\Pi(\mu_{n},\nu_{n})$, 
\begin{align}
\pi\{(T_{X},T_{Y}):\mathcal{E}(T_{X},T_{Y})\leq\alpha\} & =\sum_{T_{X},T_{Y}:\mathcal{E}(T_{X},T_{Y})\leq\alpha}\pi\{(T_{X},T_{Y})\}\label{eq:-66}\\
 & \leq\sum_{T_{X},T_{Y}:\mathcal{E}(T_{X},T_{Y})\leq\alpha}\min\{\mu_{n}(T_{X}),\nu_{n}(T_{Y})\}\label{eq:-63}\\
 & =e^{no(1)}\max_{T_{X},T_{Y}:\mathcal{E}(T_{X},T_{Y})\leq\alpha}\min\{\mu_{n}(T_{X}),\nu_{n}(T_{Y})\},\label{eq:-65}
\end{align}
where \eqref{eq:-63} follows since $\mu_{n}(T_{X})=\sum_{T_{Y}}\pi\{(T_{X},T_{Y})\}$
and $\nu_{n}(T_{Y})=\sum_{T_{X}}\pi\{(T_{X},T_{Y})\}$. Finally, expressing
the exponents of $\mu_{n}(T_{X})$ and $\nu_{n}(T_{Y})$ by relative
entropies $D(\cdot\|P_{X})$ and $D(\cdot\|P_{Y})$, we obtain $f(\alpha)$.
Hence the exponent in \eqref{eq:LD1} is lower bounded by $f(\alpha)$.
Moreover, the exponent of the upper bound \eqref{eq:-65} is attained
by some coupling $\pi\in\Pi(\mu_{n},\nu_{n})$. Let $(T_{X}^{*},T_{Y}^{*})$
be an optimal pair that attains the maximum in \eqref{eq:-65}. We
construct $\pi\in\Pi(\mu_{n},\nu_{n})$ such that 
\begin{equation}
\pi\{(T_{X}^{*},T_{Y}^{*})\}=\min\{\mu_{n}(T_{X}^{*}),\nu_{n}(T_{Y}^{*})\},\label{eq:}
\end{equation}
which ensures that the exponent of the upper bound in \eqref{eq:-65}
is asymptotically attained by such a coupling $\pi$. Hence, the exponent
in \eqref{eq:LD1} is also upper bounded by $f(\alpha)$. See the
illustration for this case in Fig. \ref{fig:}.

For the case of $\alpha>\mathcal{E}(P_{X},P_{Y})$, the intuition
behind the expression in \eqref{eq:LD2} is less obvious, because
it is difficult to construct an explicit coupling to asymptotically
attain $\mathcal{G}_{\alpha}^{(n)}(P_{X},P_{Y})$. However, since
$\mathcal{G}_{\alpha}^{(n)}(P_{X},P_{Y})$ can be rewritten in the
form of Strassen's duality (given in \eqref{eq:nesteddual2}), it
suffices to construct an explicit (asymptotically) optimal solution
to Strassen's dual problem. 
As mentioned in the above case, the exponent of a set $A\subseteq\mathcal{P}_{n}(\mathcal{X})$
is asymptotically dominated by only one empirical measure $T_{X}$
in it. Hence, it suffices to consider a singleton $A=\{T_{X}\}$ for
the optimization problem in \eqref{eq:nesteddual2}. For such a singleton,
the exponent of $\mu_{n}(A)$ is $D(T_{X}\|P_{X})$, and the exponent
of $\nu_{n}(\Gamma_{\mathcal{E}\le\alpha}(A))=\nu_{n}\{T_{Y}:\mathcal{E}(T_{X},T_{Y})\leq\alpha\}$
is $\min_{T_{Y}:\mathcal{E}(T_{X},T_{Y})\le\alpha}D(T_{Y}\|P_{Y})$.
To maximize $\mu_{n}(A)-\nu_{n}(\Gamma_{\mathcal{E}\le\alpha}(A))$,
it suffices to consider $A$ such that $\mu_{n}(A)>\nu_{n}(\Gamma_{\mathcal{E}\le\alpha}(A))$,
which, roughly speaking, is equivalent to consider $T_{X}$ such that
\begin{equation}
\min_{T_{Y}:\mathcal{E}(T_{X},T_{Y})\le\alpha}D(T_{Y}\|P_{Y})>D(T_{X}\|P_{X}).\label{eq:-69}
\end{equation}
On the other hand, $\mu_{n}(A)$ will be exponentially larger than
$\nu_{n}(\Gamma_{\mathcal{E}\le\alpha}(A))$, if the left-hand side
in \eqref{eq:-69} is upper bounded away from the right-hand side
in \eqref{eq:-69}. Hence, roughly speaking, in this case the exponent
of $\mathcal{G}_{\alpha}^{(n)}(P_{X},P_{Y})$ is the minimum of $D(T_{X}\|P_{X})$
over all $T_{X}$ satisfying \eqref{eq:-69}. Observe that the condition
in \eqref{eq:-69} is equivalent to the condition in \eqref{eq:LD_condition},
which implies that the exponent of $\mathcal{G}_{\alpha}^{(n)}(P_{X},P_{Y})$
is sandwiched between $\lim_{\alpha'\uparrow\alpha}g_{P_{X},P_{Y}}(\alpha')$
and $g_{P_{X},P_{Y}}(\alpha)$. However, in some cases, $\mu_{n}(A)-\nu_{n}(\Gamma_{\mathcal{E}\le\alpha}(A))$
is maximized by a set $A$ such that both $\mu_{n}(A)$ and $\nu_{n}(\Gamma_{\mathcal{E}\le\alpha}(A))$
approach one. For this case, the quantities $\lim_{\alpha'\uparrow\alpha}g_{P_{X},P_{Y}}(\alpha')$
and $g_{P_{X},P_{Y}}(\alpha)$ do not correspond to the exponent of
the difference $\mu_{n}(A)-\nu_{n}(\Gamma_{\mathcal{E}\le\alpha}(A))$
any more. In fact, the exponent for this case is sandwiched between
the counterparts $\lim_{\alpha'\uparrow\alpha}g_{P_{Y},P_{X}}(\alpha')$
and $g_{P_{Y},P_{X}}(\alpha)$. Hence, in a word, the exponent of
$\mathcal{G}_{\alpha}^{(n)}(P_{X},P_{Y})$ is indeed sandwiched between
$\lim_{\alpha'\uparrow\alpha}g(\alpha')$ and $g(\alpha)$. See the
illustration for this case in Fig. \ref{fig:-1}.

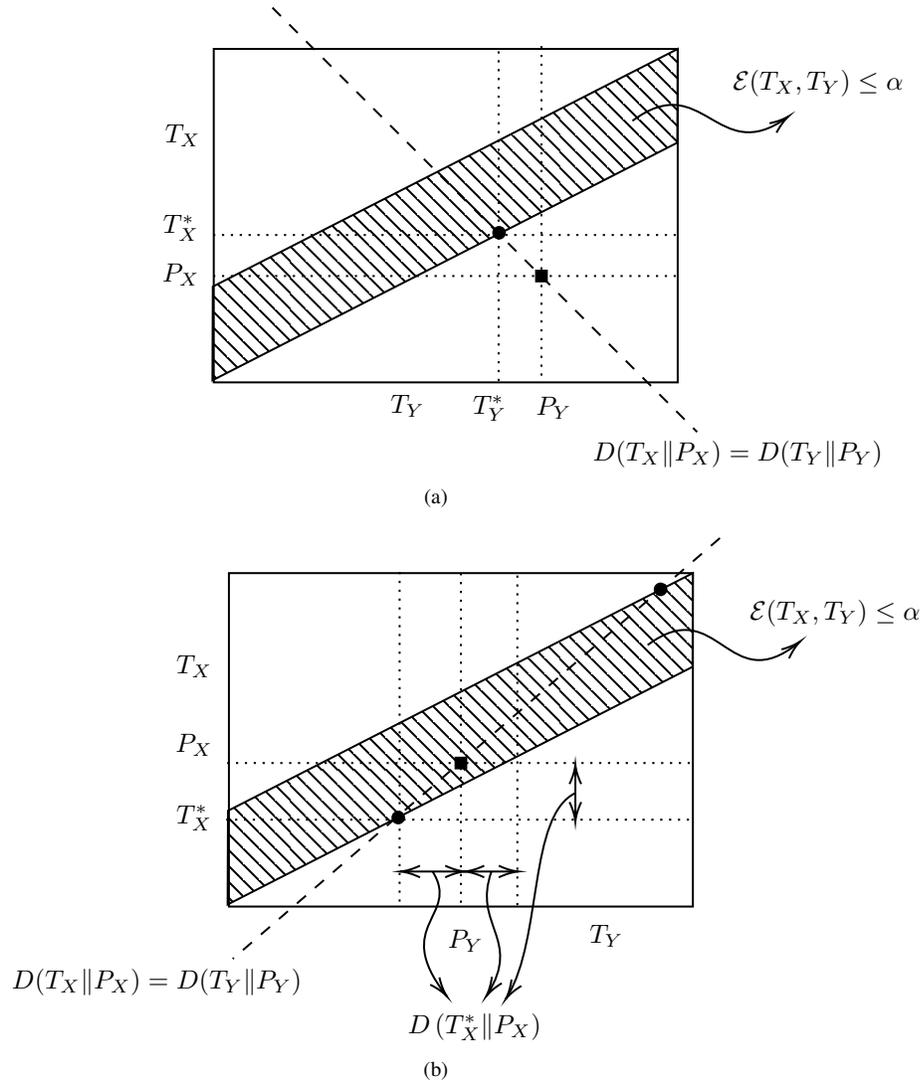
\begin{figure}
\centering

\tikzset{ pattern size/.store in=\mcSize, pattern size = 5pt, pattern
thickness/.store in=\mcThickness, pattern thickness = 0.3pt, pattern
radius/.store in=\mcRadius, pattern radius = 1pt} \makeatletter
\pgfutil@ifundefined{pgf@pattern@name@u1xc0z9lu}{ \pgfdeclarepatternformonly[\mcThickness,\mcSize]{u1xc0z9lu}
{\pgfqpoint{0pt}{-\mcThickness}} {\pgfpoint{\mcSize}{\mcSize}}
{\pgfpoint{\mcSize}{\mcSize}} { \pgfsetcolor{\tikz@pattern@color}
\pgfsetlinewidth{\mcThickness} \pgfpathmoveto{\pgfqpoint{0pt}{\mcSize}}
\pgfpathlineto{\pgfpoint{\mcSize+\mcThickness}{-\mcThickness}}
\pgfusepath{stroke} }} \makeatother \tikzset{every picture/.style={line
width=0.75pt}} 

\subfloat[\label{fig:}]{\begin{tikzpicture}[x=0.75pt,y=0.75pt,yscale=-1,xscale=1,scale=0.9] 
\path (0,100); 
\draw   (147.36,21.65) -- (407.36,21.65) -- (407.36,208.65) -- (147.36,208.65) -- cycle ; 
\draw  [dash pattern={on 0.84pt off 2.51pt}]  (331,20) -- (331,207) ;
\draw  [dash pattern={on 0.84pt off 2.51pt}]  (147,149) -- (409,149) ;
\draw    (382,62) .. controls (421.6,32.3) and (429.35,88.28) .. (468.32,60.29) ; 
\draw [shift={(469.51,59.42)}, rotate = 503.13] [color={rgb, 255:red, 0; green, 0; blue, 0 }  ][line width=0.75]    (10.93,-3.29) .. controls (6.95,-1.4) and (3.31,-0.3) .. (0,0) .. controls (3.31,0.3) and (6.95,1.4) .. (10.93,3.29)   ;
\draw  [pattern=u1xc0z9lu,pattern size=6pt,pattern thickness=0.75pt,pattern radius=0pt, pattern color={rgb, 255:red, 0; green, 0; blue, 0}] (146.84,154.93) -- (146.64,207.5) -- (407.16,74.22) -- (407.36,21.65) -- cycle ; 
\draw  [dash pattern={on 0.84pt off 2.51pt}]  (147,126) -- (409,126) ;
\draw  [dash pattern={on 0.84pt off 2.51pt}]  (307,23) -- (307,210) ;
\draw  [fill={rgb, 255:red, 0; green, 0; blue, 0 }  ,fill opacity=1 ] (304,124.75) .. controls (304,122.96) and (305.46,121.5) .. (307.25,121.5) .. controls (309.04,121.5) and (310.5,122.96) .. (310.5,124.75) .. controls (310.5,126.54) and (309.04,128) .. (307.25,128) .. controls (305.46,128) and (304,126.54) .. (304,124.75) -- cycle ; 
\draw  [dash pattern={on 4.5pt off 4.5pt}]  (180.25,-1.5) -- (418.25,236.5) ;
\draw  [fill={rgb, 255:red, 0; green, 0; blue, 0 }  ,fill opacity=1 ] (328,146) -- (334,146) -- (334,152) -- (328,152) -- cycle ;
\draw (130,70) node   {$T_{X}$}; 
\draw (256,222) node   {$T_{Y}$}; 
\draw (338,223) node   {$P_{Y}$}; 
\draw (129,148) node   {$P_{X}$}; 
\draw (486,41) node   {$\mathcal{E}( T_{X} ,T_{Y}) \leq \alpha $}; 
\draw (302,223) node   {$T^{*}_{Y}$}; 
\draw (129,122) node   {$T^{*}_{X}$}; 
\draw (441,249) node   {$D( T_{X} \| P_{X}) =D( T_{Y} \| P_{Y})$};
\end{tikzpicture}

}\centering

\tikzset{ pattern size/.store in=\mcSize, pattern size = 5pt, pattern
thickness/.store in=\mcThickness, pattern thickness = 0.3pt, pattern
radius/.store in=\mcRadius, pattern radius = 1pt} \makeatletter
\pgfutil@ifundefined{pgf@pattern@name@rdqhqxxav}{ \pgfdeclarepatternformonly[\mcThickness,\mcSize]{rdqhqxxav}
{\pgfqpoint{0pt}{-\mcThickness}} {\pgfpoint{\mcSize}{\mcSize}}
{\pgfpoint{\mcSize}{\mcSize}} { \pgfsetcolor{\tikz@pattern@color}
\pgfsetlinewidth{\mcThickness} \pgfpathmoveto{\pgfqpoint{0pt}{\mcSize}}
\pgfpathlineto{\pgfpoint{\mcSize+\mcThickness}{-\mcThickness}}
\pgfusepath{stroke} }} \makeatother \tikzset{every picture/.style={line
width=0.75pt}} 

\subfloat[\label{fig:-1}]{\begin{tikzpicture}[x=0.75pt,y=0.75pt,yscale=-1,xscale=1,scale=0.9] 
\path (0,225.0170440673828); 
\draw   (165.36,109.65) -- (425.36,109.65) -- (425.36,296.65) -- (165.36,296.65) -- cycle ; 
\draw  [dash pattern={on 0.84pt off 2.51pt}]  (327,111) -- (327,298) ;
\draw  [dash pattern={on 0.84pt off 2.51pt}]  (163.84,247.93) -- (425.84,247.93) ;
\draw    (400,150) .. controls (439.6,120.3) and (444.91,180.64) .. (484.3,148.92) ; 
\draw [shift={(485.51,147.93)}, rotate = 500.04] [color={rgb, 255:red, 0; green, 0; blue, 0 }  ][line width=0.75]    (10.93,-3.29) .. controls (6.95,-1.4) and (3.31,-0.3) .. (0,0) .. controls (3.31,0.3) and (6.95,1.4) .. (10.93,3.29)   ;
\draw  [pattern=rdqhqxxav,pattern size=6pt,pattern thickness=0.75pt,pattern radius=0pt, pattern color={rgb, 255:red, 0; green, 0; blue, 0}] (164.84,242.93) -- (164.64,295.5) -- (425.16,162.22) -- (425.36,109.65) -- cycle ; 
\draw  [dash pattern={on 0.84pt off 2.51pt}]  (164.36,216.15) -- (426.36,216.15) ;
\draw  [dash pattern={on 0.84pt off 2.51pt}]  (295.36,109.65) -- (295.36,296.65) ;
\draw  [fill={rgb, 255:red, 0; green, 0; blue, 0 }  ,fill opacity=1 ] (257,246.7) .. controls (257.03,244.91) and (258.5,243.47) .. (260.3,243.5) .. controls (262.09,243.53) and (263.53,245) .. (263.5,246.8) .. controls (263.47,248.59) and (262,250.03) .. (260.2,250) .. controls (258.41,249.97) and (256.97,248.5) .. (257,246.7) -- cycle ; 
\draw  [dash pattern={on 0.84pt off 2.51pt}]  (261,109) -- (261,296) ;
\draw    (263.5,277) -- (295.5,277) ; 
\draw [shift={(297.5,277)}, rotate = 180] [color={rgb, 255:red, 0; green, 0; blue, 0 }  ][line width=0.75]    (10.93,-3.29) .. controls (6.95,-1.4) and (3.31,-0.3) .. (0,0) .. controls (3.31,0.3) and (6.95,1.4) .. (10.93,3.29)   ; 
\draw [shift={(261.5,277)}, rotate = 0] [color={rgb, 255:red, 0; green, 0; blue, 0 }  ][line width=0.75]    (10.93,-3.29) .. controls (6.95,-1.4) and (3.31,-0.3) .. (0,0) .. controls (3.31,0.3) and (6.95,1.4) .. (10.93,3.29)   ; 
\draw    (299.5,277) -- (325.5,277) ; 
\draw [shift={(327.5,277)}, rotate = 180] [color={rgb, 255:red, 0; green, 0; blue, 0 }  ][line width=0.75]    (10.93,-3.29) .. controls (6.95,-1.4) and (3.31,-0.3) .. (0,0) .. controls (3.31,0.3) and (6.95,1.4) .. (10.93,3.29)   ; 
\draw [shift={(297.5,277)}, rotate = 0] [color={rgb, 255:red, 0; green, 0; blue, 0 }  ][line width=0.75]    (10.93,-3.29) .. controls (6.95,-1.4) and (3.31,-0.3) .. (0,0) .. controls (3.31,0.3) and (6.95,1.4) .. (10.93,3.29)   ; 
\draw    (279.5,277) .. controls (295.34,302.28) and (256.29,306.39) .. (286.56,345.79) ; 
\draw [shift={(287.5,347)}, rotate = 231.71] [color={rgb, 255:red, 0; green, 0; blue, 0 }  ][line width=0.75]    (10.93,-3.29) .. controls (6.95,-1.4) and (3.31,-0.3) .. (0,0) .. controls (3.31,0.3) and (6.95,1.4) .. (10.93,3.29)   ;
\draw    (312.5,277) .. controls (301.66,307.08) and (329.64,314.77) .. (310.41,345.58) ; \draw [shift={(309.5,347)}, rotate = 303.27] [color={rgb, 255:red, 0; green, 0; blue, 0 }  ][line width=0.75]    (10.93,-3.29) .. controls (6.95,-1.4) and (3.31,-0.3) .. (0,0) .. controls (3.31,0.3) and (6.95,1.4) .. (10.93,3.29)   ;
\draw    (359.5,219) -- (359.5,247) ; \draw [shift={(359.5,249)}, rotate = 270] [color={rgb, 255:red, 0; green, 0; blue, 0 }  ][line width=0.75]    (10.93,-3.29) .. controls (6.95,-1.4) and (3.31,-0.3) .. (0,0) .. controls (3.31,0.3) and (6.95,1.4) .. (10.93,3.29)   ; \draw [shift={(359.5,217)}, rotate = 90] [color={rgb, 255:red, 0; green, 0; blue, 0 }  ][line width=0.75]    (10.93,-3.29) .. controls (6.95,-1.4) and (3.31,-0.3) .. (0,0) .. controls (3.31,0.3) and (6.95,1.4) .. (10.93,3.29)   ; 
\draw    (359.5,233) .. controls (332.9,245.8) and (341.24,314.4) .. (321.42,347.03) ; \draw [shift={(320.5,348.5)}, rotate = 303.27] [color={rgb, 255:red, 0; green, 0; blue, 0 }  ][line width=0.75]    (10.93,-3.29) .. controls (6.95,-1.4) and (3.31,-0.3) .. (0,0) .. controls (3.31,0.3) and (6.95,1.4) .. (10.93,3.29)   ;
\draw  [dash pattern={on 4.5pt off 4.5pt}]  (440.5,90) -- (167.5,326) ;
\draw  [fill={rgb, 255:red, 0; green, 0; blue, 0 }  ,fill opacity=1 ] (404,118.7) .. controls (404.03,116.91) and (405.5,115.47) .. (407.3,115.5) .. controls (409.09,115.53) and (410.53,117) .. (410.5,118.8) .. controls (410.47,120.59) and (409,122.03) .. (407.2,122) .. controls (405.41,121.97) and (403.97,120.5) .. (404,118.7) -- cycle ; 
\draw  [fill={rgb, 255:red, 0; green, 0; blue, 0 }  ,fill opacity=1 ] (292.36,213.15) -- (298.36,213.15) -- (298.36,219.15) -- (292.36,219.15) -- cycle ;
\draw (146,162) node   {$T_{X}$}; 
\draw (377,313) node   {$T_{Y}$}; 
\draw (298,316) node   {$P_{Y}$}; 
\draw (146,206) node   {$P_{X}$}; 
\draw (505,132) node   {$\mathcal{E}( T_{X} ,T_{Y}) \leq \alpha $}; 
\draw (146,247) node   {$T^{*}_{X}$}; 
\draw (303,364) node   {$D( T^{*}_{X} \| P_{X})$}; 
\draw (125,339) node   {$D( T_{X} \| P_{X}) =D( T_{Y} \| P_{Y})$};
\end{tikzpicture}

}\caption{Illustrations of the LDP for Strassen's OT problem given in Theorem
\ref{thm:LDP}.}
\end{figure}

To further illustrate our results, the binary example is given in
Section \ref{sec:int_exam}.

\subsection{Main Result 2: Moderate Deviations Principle}

In addition to the large deviations regime, we also consider the moderate
deviations regime and central limit regime in the Strassen's OT problem,
in both of which the parameter $\alpha$ is allowed to vary with $n$
as $n$ going to infinity. For simplicity, in these two regimes, we
only consider the case in which $\mathcal{X}$ and $\mathcal{Y}$
are finite sets. Without loss of generality, we assume $\mathcal{X}=\{1,2,...,M\}$
and $\mathcal{Y}=\{1,2,...,N\}$ for some positive integers $M,N$,
and also assume that $\mathcal{X}$ and $\mathcal{Y}$ are respectively
the supports of $P_{X}$ and $P_{Y}$. We now introduce a \emph{moderate
deviations principle (MDP) for Strassen's OT problem}, in which we
set $\alpha$ to $\alpha_{n}=\mathcal{E}(P_{X},P_{Y})+{\Delta}/{\sqrt{na_{n}}}$
for a positive sequence $\{a_{n}\}$ satisfying $a_{n}\to0$ and $na_{n}\to\infty$
as $n\to\infty$. We characterize the limit of $-a_{n}\log(1-\mathcal{G}_{\alpha_{n}}^{(n)}(P_{X},P_{Y}))$
for $\Delta<0$, and the limit of $-a_{n}\log\mathcal{G}_{\alpha_{n}}^{(n)}(P_{X},P_{Y}))$
for $\Delta>0$. These two limits are called \emph{moderate deviations
(MD) exponents}.

Assume $c$ is finite on the finite set $\mathcal{X}\times\mathcal{Y}$,
i.e., $\max_{x,y}c(x,y)<\infty$. For this case, $\mathcal{P}(\mathcal{X})$
and $\mathcal{P}(\mathcal{Y})$ are probability simplices. 
Let $P_{XY}^{*}$ be an optimal coupling that attain $\mathcal{E}(P_{X},P_{Y})$.
Denote $\mathcal{S}$ as the support of $P_{XY}^{*}$. Define the
hyperplane 
\begin{equation}
\mathbb{S}_{X}:=\Bigl\{\beta_{X}\in\mathbb{R}^{|\mathcal{X}|}:\sum_{x}\beta_{X}(x)=0\Bigr\},\label{eq:-2}
\end{equation}
which corresponds to the set of signed measures\footnote{Here we do not distinguish the signed measure $\beta_{X}$ and the
function $x\mapsto\beta_{X}(\{x\})$, since $\beta_{X}$ is uniquely
determined by the restriction $x\mapsto\beta_{X}(\{x\})$. We also
denote $\beta_{X}(\{x\})$ as $\beta_{X}(x)$ for brevity. } $\beta_{X}$ on $\mathcal{X}$ with total measure zero, i.e., $\beta_{X}(\mathcal{X})=0$.
For $\mathcal{Y}$, define $\mathbb{S}_{Y}$ similarly. For signed
measures $\beta_{X}\in\mathbb{S}_{X},\beta_{Y}\in\mathbb{S}_{Y}$,
we define a functional 
\begin{align}
\theta(\beta_{X},\beta_{Y}) & :=\min_{\begin{subarray}{c}
\beta_{XY}\in\overline{\Pi}(\beta_{X},\beta_{Y}):\\
\{(x,y):\beta_{XY}(x,y)<0\}\subseteq\mathcal{S}
\end{subarray}}\sum_{x,y}\beta_{XY}(x,y)c(x,y),\label{eqn:theta}
\end{align}
where $\overline{\Pi}(\beta_{X},\beta_{Y})$ is the set of all signed
(joint) measures on $\mathcal{X}\times\mathcal{Y}$ such that its
$X$- and $Y$-marginals equal to $\beta_{X}$ and $\beta_{Y}$ respectively.
By the strong duality in linear programming, 
\begin{align}
\theta(\beta_{X},\beta_{Y})=\max_{(\phi,\psi)\in\mathcal{D}}\sum_{x}\phi(x)\beta_{X}(x)+\sum_{y}\psi(y)\beta_{Y}(y),\label{eq:theta_dual}
\end{align}
where 
\begin{align}
\mathcal{D}:= & \{(\phi,\psi)\in\mathbb{R}^{|\mathcal{X}|}\times\mathbb{R}^{|\mathcal{Y}|}:\phi(x)+\psi(y)=c(x,y),\forall(x,y)\in\mathcal{S},\\
 & \qquad\qquad\phi(x)+\psi(y)\leq c(x,y),\forall(x,y)\in\mathcal{S}^{c}\}
\end{align}
is the set of optimal solutions to \eqref{eq:-111}. In fact, $\mathcal{D}$
is independent of the choice of $\mathcal{S}$, as long as $\mathcal{S}$
is the support of an optimal solution to the primal problem \eqref{eq:OT}.
This observation follows from the fact that if the strong duality
holds (as in our case), then any pair of primal optimal solution and
dual optimal solution forms a saddle point of the Lagrangian of the
primal problem \eqref{eq:OT}. Conversely, any saddle point of the
Lagrangian must consist of an primal optimal solution and a dual optimal
solution. In other words, the set of saddle points is the Cartesian
product of the set of primal optimal solutions and the set of dual
optimal solutions. See details on the page 239 of \cite{boyd2004convex}.
The formula in \eqref{eq:theta_dual} coincides with the directional
derivative given on the page 2771 of \cite{tameling2019empirical}.
\begin{thm}[MDP for Strassen's OT]
\label{thm:MDP} Assume $\mathcal{X}$ and $\mathcal{Y}$ are finite,
and $c$ is finite. Assume $\alpha_{0}:=\mathcal{E}(P_{X},P_{Y})\in(c_{\inf},c_{\sup})$,
where $c_{\inf},c_{\sup}$ are defined in Assumption \ref{ass:Interior}.
Let $\{a_{n}\}$ be a positive sequence such that $a_{n}\to0$ and
$na_{n}\to\infty$ as $n\to\infty$. Then, the following hold. 
\begin{enumerate}
\item If $\Delta<0$, we have 
\[
\lim_{n\to\infty}-a_{n}\log(1-\mathcal{G}_{\alpha_{0}+{\Delta}/{\sqrt{na_{n}}}}^{(n)}(P_{X},P_{Y}))=\tilde{f}(\Delta),
\]
where 
\begin{align*}
\tilde{f}(\Delta) & :=\min_{\beta_{X}\in\mathbb{S}_{X},\beta_{Y}\in\mathbb{S}_{Y}:\theta(\beta_{X},\beta_{Y})\le\Delta}\max\Bigl\{\frac{1}{2}\sum_{x}\frac{\beta_{X}(x)^{2}}{P_{X}(x)},\frac{1}{2}\sum_{y}\frac{\beta_{Y}(y)^{2}}{P_{Y}(y)}\Bigr\}.
\end{align*}
\item If $\Delta>0$, we have 
\[
\lim_{\Delta'\uparrow\Delta}\tilde{g}(\Delta')\le\liminf_{n\to\infty}-a_{n}\log\mathcal{G}_{\alpha_{0}+{\Delta}/{\sqrt{na_{n}}}}^{(n)}(P_{X},P_{Y})\le\limsup_{n\to\infty}-a_{n}\log\mathcal{G}_{\alpha_{0}+{\Delta}/{\sqrt{na_{n}}}}^{(n)}(P_{X},P_{Y})\le\tilde{g}(\Delta),
\]
where $\tilde{g}(\Delta):=\min\{\tilde{g}_{P_{X},P_{Y}}(\Delta),\tilde{g}_{P_{Y},P_{X}}(\Delta)\}$
with $\tilde{g}_{P_{X},P_{Y}}(\Delta)$ defined as the minimum of
$\frac{1}{2}\sum_{x}\frac{\beta_{X}(x)^{2}}{P_{X}(x)}$ over all $\beta_{X}\in\mathbb{S}_{X}$
such that 
\[
\min_{\beta_{Y}\in\mathbb{S}_{Y}:\sum_{x}\frac{\beta_{X}(x)^{2}}{P_{X}(x)}\leq\sum_{y}\frac{\beta_{Y}(y)^{2}}{P_{Y}(y)}}\theta(\beta_{X},\beta_{Y})>\Delta,
\]
and $\tilde{g}_{P_{Y},P_{X}}(\Delta)$ defined similarly. 
\end{enumerate}
\end{thm}
Our proof relies on the MDP for the empirical measure, in which the
rate function is $\beta\mapsto\frac{1}{2}\sum_{x}\frac{\beta(x)^{2}}{P(x)}$
(for the finite alphabet case). The characterizations of the MD exponents
for Strassen's OT problem are similar to the ones of the LD exponents,
except that the relative entropies $D(Q_{X}\|P_{X})$ and $D(Q_{Y}\|P_{Y})$
are respectively replaced by $\frac{1}{2}\sum_{x}\frac{\beta_{X}(x)^{2}}{P_{X}(x)}$
and $\frac{1}{2}\sum_{y}\frac{\beta_{Y}(y)^{2}}{P_{Y}(y)}$, and the
OT functional $\mathcal{E}(Q_{X},Q_{Y})$ is replaced by the functional
$\theta(\beta_{X},\beta_{Y})$.

\subsection{Main Result 3: Central Limit Theorem }

\label{subsec:Main-Result-3} For the central limit regime, we set
$\alpha$ to $\alpha_{0}+{\Delta}/{\sqrt{n}}$ with $\alpha_{0}=\mathcal{E}(P_{X},P_{Y})$,
and study the asymptotic behavior of $\mathcal{G}_{\alpha_{0}+{\Delta}/{\sqrt{n}}}^{(n)}(P_{X},P_{Y})$.
Similarly to the moderate deviations regime, $\mathcal{X}$ and $\mathcal{Y}$
are assumed to be finite (i.e., $\mathcal{X}=\{1,2,...,M\}$ and $\mathcal{Y}=\{1,2,...,N\}$)
and also assumed to be respectively the supports of $P_{X}$ and $P_{Y}$.
Assume $c$ is finite. Denote random variables $U_{x}=\bone_{X=x},x\in\mathcal{X}$
with $X\sim P_{X}$. Denote $\mathbf{U}=(U_{x},x\in\mathcal{X})$
as a random vector. The mean and covariance of $\mathbf{U}$ are respectively
\[
\mathbb{E}[\mathbf{U}]=(P_{X}(x))_{x\in\mathcal{X}}\quad\textrm{ and }\quad\mathrm{Cov}(\mathbf{U})=[P_{X}(x)\bone_{x=x'}-P_{X}(x)P_{X}(x')]_{(x,x')\in\mathcal{X}^{2}}.
\]
Define $\Phi_{P_{X}}$ as the Gaussian measure\footnote{In fact, both $\Phi_{P_{X}}$ and $\Phi_{P_{Y}}$ are degenerate,
since the covariance matrices are not invertible. However, this does
not affect our results.} on $\mathbb{R}^{|\mathcal{X}|}$ with zero mean and covariance $\mathrm{Cov}(\mathbf{U})$.
For $P_{Y}$, define $\Phi_{P_{Y}}$ similarly. Define a new Strassen's
OT problem as follows: 
\begin{align}
\Lambda_{\Delta}(P_{X},P_{Y}) & :=\min_{\Psi\in\Pi(\Phi_{P_{X}},\Phi_{P_{Y}})}\Psi\{(\beta_{X},\beta_{Y}):\theta(\beta_{X},\beta_{Y})>\Delta\}\label{eq:-34}\\
 & =\sup_{\substack{\textrm{closed }A\subseteq\mathbb{S}_{X},B\subseteq\mathbb{S}_{Y}:\\
\theta(\beta_{X},\beta_{Y})>\Delta,\forall\beta_{X}\in A,\beta_{Y}\in B
}
}\Phi_{P_{X}}(A)+\Phi_{P_{Y}}(B)-1\label{eq:-19-6}\\
 & =\sup_{\textrm{compact }A\subseteq\mathbb{S}_{X}}\Phi_{P_{X}}(A)-\Phi_{P_{Y}}(\Gamma_{\theta\le\Delta}(A)),\label{eq:-20-6}
\end{align}
where for $A\subseteq\mathbb{S}_{X}$, 
\[
\Gamma_{\theta\le\Delta}(A):=\bigcup_{\beta_{X}\in A}\{\beta_{Y}\in\mathbb{S}_{Y}:\theta(\beta_{X},\beta_{Y})\le\Delta\}.
\]
In Strassen's OT problem in \eqref{eq:-34}, the marginals are two
Gaussian distributions and the cost function is the functional $(\beta_{X},\beta_{Y})\mapsto\theta(\beta_{X},\beta_{Y})$.
Equations \eqref{eq:-19-6} and \eqref{eq:-20-6} follow by Strassen's
duality in Theorem \ref{thm:strassen}. Recall the definition of $\mathbb{S}_{X}$
in \eqref{eq:-2}. In \eqref{eq:-19-6} and \eqref{eq:-20-6}, ``closed
$A$, $B$'' means that $A$ is closed in the space $\mathbb{S}_{X}$
(i.e., under the weak topology, or equivalently, the relative topology)
and $B$ is closed in $\mathbb{S}_{Y}$. We bound the asymptotics
of $\mathcal{G}_{\alpha_{0}+{\Delta}/{\sqrt{n}}}^{(n)}(P_{X},P_{Y})$
in the following theorem which is called the \emph{central limit theorem
(CLT) for Strassen's OT problem}. 
\begin{thm}[CLT for Strassen's OT]
\label{thm:CLT} Assume $\mathcal{X}$ and $\mathcal{Y}$ are finite,
and $c$ is finite. Assume $\alpha_{0}:=\mathcal{E}(P_{X},P_{Y})\in(c_{\inf},c_{\sup})$.
Then, we have 
\begin{align}
\Lambda_{\Delta}(P_{X},P_{Y}) & \leq\liminf_{n\to\infty}\mathcal{G}_{\alpha_{0}+{\Delta}/{\sqrt{n}}}^{(n)}(P_{X},P_{Y})\leq\limsup_{n\to\infty}\mathcal{G}_{\alpha_{0}+{\Delta}/{\sqrt{n}}}^{(n)}(P_{X},P_{Y})\leq\lim_{\Delta'\uparrow\Delta}\Lambda_{\Delta'}(P_{X},P_{Y}).\label{eq:CLTOT}
\end{align}
\end{thm}
Given $(P_{X},P_{Y})$, $\Lambda_{\Delta}(P_{X},P_{Y})$ is right-continuous
in $\Delta$, which means that 
\[
\lim_{n\to\infty}\mathcal{G}_{\alpha_{0}+{\Delta}/{\sqrt{n}}}^{(n)}(P_{X},P_{Y})=\Lambda_{\Delta}(P_{X},P_{Y})\textrm{ for almost every }\Delta\in\mathbb{R}.
\]
Furthermore, different from the CLT for empirical measures \cite[Theorem 14.3]{billingsley2013convergence},
the CLT for Strassen's OT here involves additional OT optimizations
in every term in \eqref{eq:CLTOT}. These optimizations are taken
over couplings of two empirical measures in the definition of $\mathcal{G}_{\alpha_{0}+{\Delta}/{\sqrt{n}}}^{(n)}(P_{X},P_{Y})$,
and over couplings of two Gaussian measures in the definition of $\Lambda_{\Delta}(P_{X},P_{Y})$.



\subsection{Connection to Empirical Optimal Transport }

\label{subsec:connection} It is well known that for a pair of empirical
measures $(T_{X},T_{Y})$, $\mathcal{E}(T_{X},T_{Y})$ is always attained
by an empirical joint measure. In other words, if we define the \emph{empirical
coupling set} for a pair of empirical measures $(T_{X},T_{Y})\in\mathcal{P}_{n}(\mathcal{X})\times\mathcal{P}_{n}(\mathcal{Y})$
as 
\[
\Pi_{n}(T_{X},T_{Y}):=\mathcal{P}_{n}(\mathcal{X}\times\mathcal{Y})\cap\Pi(T_{X},T_{Y})
\]
(i.e., the set of couplings of $(T_{X},T_{Y})$ which is discrete
and whose probability mass at each atom is a multiple of $1/n$),
and define the\emph{ empirical OT cost} for $(T_{X},T_{Y})\in\mathcal{P}_{n}(\mathcal{X})\times\mathcal{P}_{n}(\mathcal{Y})$
as 
\[
\mathcal{E}_{n}(T_{X},T_{Y}):=\min_{T_{XY}\in\Pi_{n}(T_{X},T_{Y})}\mathbb{E}_{(X,Y)\sim T_{XY}}[c(X,Y)],
\]
then $\mathcal{E}_{n}(T_{X},T_{Y})$ remains the same as $\mathcal{E}(T_{X},T_{Y})$. 
\begin{lem}[Empirical OT]
\cite[Page 5]{villani2003topics}\label{lem:empiricalOT} For a pair
of empirical measures $(T_{X},T_{Y})\in\mathcal{P}_{n}(\mathcal{X})\times\mathcal{P}_{n}(\mathcal{Y})$
and for all $n\ge1$, we have 
\begin{align}
\mathcal{E}_{n}(T_{X},T_{Y}) & =\mathcal{E}(T_{X},T_{Y}).\label{eq:empiricalOT}
\end{align}
\end{lem}
Such a result is a consequence of Birkhoff's theorem \cite[Page 5]{villani2003topics}.
Combining this lemma and \eqref{eq:nestedOT2} yields the fact that
characterizing the asymptotics of $\mathcal{G}_{\alpha}^{(n)}(P_{X},P_{Y})$,
as done in Sections \ref{subsec:Main-Result-1}-\ref{subsec:Main-Result-3},
is equivalent to characterizing the asymptotic behavior of the (random)
empirical OT cost $\mathcal{E}_{n}(T_{X^{n}},T_{Y^{n}})$ where $T_{X^{n}},T_{Y^{n}}$
are respectively the empirical measures of a pair of random vectors
$(X^{n},Y^{n})$ that follows the optimal coupling of $(P_{X}^{\otimes n},P_{Y}^{\otimes n})$
attaining the minimum in \eqref{eq:nStrassen}.

By definition, $c_{n}(x^{n},y^{n})=\mathbb{E}_{(X,Y)\sim T_{(x^{n},y^{n})}}[c(X,Y)]$
for all $(x^{n},y^{n})\in\mathcal{X}^{n}\times\mathcal{Y}^{n}$. Hence,
the empirical OT cost can be rewritten in the form of optimization
over sequences, i.e., for two given sequences $x^{n}$ and $y^{n}$
whose empirical measures are respectively $T_{X}$ and $T_{Y}$, we
have 
\begin{equation}
\mathcal{E}_{n}(T_{X},T_{Y})=\min_{\sigma}c_{n}(x_{\sigma}^{n},y^{n}),\label{eq:matching}
\end{equation}
where the minimization is taken over all permutations $\sigma$ on
$\{1,2,...,n\}$, and $x_{\sigma}^{n}$ is the resultant sequence
by permuting $x^{n}$ according to $\sigma$.

The minimization problem at the right-hand side of \eqref{eq:matching}
is known as the \emph{optimal matching problem }\cite{ajtai1984optimal,talagrand1992matching},
the optimal value of which, as shown in \eqref{eq:matching}, coincides
with the empirical OT cost. If $T_{X},T_{Y}$ are set to the empirical
measures of two \emph{independent} random vectors $X^{n},Y^{n}$,
each of which consists of i.i.d. components with a given distribution,
i.e., $(X^{n},Y^{n})\sim P_{X}^{\otimes n}\otimes P_{Y}^{\otimes n}$
for some $P_{X}$ and $P_{Y}$, then the induced empirical OT cost
$\mathcal{E}_{n}(T_{X^{n}},T_{Y^{n}})$ (or $\min_{\sigma}c_{n}(X_{\sigma}^{n},Y^{n})$)
is random as well. The asymptotic behavior of such $\mathcal{E}_{n}(T_{X^{n}},T_{Y^{n}})$
was widely studied in the literature; see for example \cite{ajtai1984optimal,talagrand1992matching,talagrand1993integrability,talagrand1994transportation,dobric1995asymptotics,del1999central,ambrosio2019pde,sommerfeld2018inference,tameling2019empirical,del2019central,dudley1969speed,boissard2014mean,fournier2015rate,weed2019sharp}.
In contrast, in our setting, specifically in \eqref{eq:nStrassen}
or \eqref{eq:nestedOT2}, the random vectors $X^{n},Y^{n}$ are not
necessarily independent. More precisely, their joint distribution
is implicitly specified by the minimization in \eqref{eq:nStrassen}
or \eqref{eq:nestedOT2} which is rather difficult to solve. Hence,
our setting is more complicated.

The LDP and MDP of the empirical OT cost 
were investigated in \cite{ganesh2007large}. In \cite[Theorem 3.1]{ganesh2007large},
Ganesh and O'Connell showed that in the large deviation regime, the
rate function of the empirical Wasserstein distance $W_{1}(T_{X^{n}},T_{Y^{n}})$
with $(X^{n},Y^{n})\sim P_{X}^{\otimes n}\otimes P_{Y}^{\otimes n}$
is $I:t\in\mathbb{R}\mapsto\inf_{Q_{X},Q_{Y}:W_{1}(Q_{X},Q_{Y})=t}D(Q_{X}\|P_{X})+D(Q_{Y}\|P_{Y}).$
This result is intuitive from Sanov's theorem, since $(X^{n},Y^{n})\sim P_{X}^{\otimes n}\otimes P_{Y}^{\otimes n}$
and hence the rate function of the empirical joint measure is the
sum of the one of the empirical measure induced by $P_{X}^{\otimes n}$
and the one induced by $P_{Y}^{\otimes n}$. A similar rate function
for the MDP was also derived in \cite{ganesh2007large}, but with
the relative entropy replaced by the half of the $\chi^{2}$-divergence.
As for the central limit regime, Tameling, Sommerfeld, and Munk \cite{sommerfeld2018inference,tameling2019empirical}
derived the limit law for the $\sqrt{n}$-scaled version of the empirical
Wasserstein distance for $P_{X},P_{Y}$ defined on the same \emph{countable}
metric space $(\mathcal{X},d)$. They showed that the limit law is
not Gaussian in general, but it indeed is if the optimal solution
(also known as the Kantorovich potential) to the Kantorovich dual
problem in \eqref{eq:dual} (or \eqref{eq:-111}) is unique. del Barrio
and Loubes \cite{del2019central} derived a similar central limit
theorem for the quadratic empirical Wasserstein distance which shows
that the limit law is Gaussian as well, if the distributions $P_{X},P_{Y}$
are distinct, absolutely continuous (with respect to the Lebesgue
measure in the Euclidean space), and have moments of order $4+\delta$
for some $\delta>0$ and positive densities on their convex supports.
These assumptions ensure that the optimal solution to the dual problem
in \eqref{eq:dual} is unique, which in turn implies that del Barrio
and Loubes's results are consistent with Tameling, Sommerfeld, and
Munk's. The case when $P_{X}=P_{Y}$ is the uniform distribution on
the unit hypercube was investigated widely in the literature; see
\cite{ajtai1984optimal,talagrand1992matching,talagrand1993integrability,talagrand1994transportation,ambrosio2019pde}.
The case of $P_{X}=P_{Y}$ was extended to other atomless measures
on Euclidean spaces in \cite{dudley1969speed,boissard2014mean,fournier2015rate,weed2019sharp}.
For these cases, the order of the empirical Wasserstein distance is
strictly larger than $\sqrt{n}$, which is hence different from the
countable case in \cite{sommerfeld2018inference,tameling2019empirical}
and the $P_{X}\neq P_{Y}$ case in \cite{del2019central}. In other
words, the asymptotic behavior of the empirical Wasserstein distance
in the central limit regime is sensitive to the factors whether $P_{X}$
and $P_{Y}$ are identical and whether $P_{X}$ and $P_{Y}$ are countably
supported. See relevant discussions in the introduction parts of \cite{del2019central,tameling2019empirical}.
All the LDP, MDP, and CLT results mentioned above for the empirical
Wasserstein distance are different from our results, since in these
results, the (random) empirical measures are independent, while in
our results, they are not. Even so, the convergence orders in our
results remain the same as the ones in these results under the same
settings. 
It is worth noting that in all of the related works mentioned above,
the Kantorovich duality plays a crucial role. In addition, instead
of studying the asymptotic behavior of the OT cost, Gozlan and Léonard
\cite{gozlan2007large} regarded the theory of large deviation as
a tool, and applied it to derive new transportation cost inequalities.

Under the product distribution $P_{X}^{\otimes n}\otimes P_{Y}^{\otimes n}$,
the (random) empirical measures $T_{X^{n}}$ of $X^{n}$ and $T_{Y^{n}}$
of $Y^{n}$ are independent, which means that the joint law of $(T_{X^{n}},T_{Y^{n}})$
for this case is $\mu_{n}\otimes\nu_{n}$. Obviously, $\mu_{n}\otimes\nu_{n}$
is a coupling of $\mu_{n}$ and $\nu_{n}$. On the other hand, in
the nested formula in \eqref{eq:nestedOT}, we minimize the probability
of the event $\{(Q_{X},Q_{Y}):\mathcal{E}(Q_{X},Q_{Y})>\alpha\}$
over all couplings of $\mu_{n}$ and $\nu_{n}$. Hence, 
\begin{align}
\mathcal{G}_{\alpha}^{(n)}(P_{X},P_{Y}) & \leq(\mu_{n}\otimes\nu_{n})\{(Q_{X},Q_{Y})\in\mathcal{P}(\mathcal{X})\times\mathcal{P}(\mathcal{Y}):\mathcal{E}(Q_{X},Q_{Y})>\alpha\}\nonumber \\
 & =\mathbb{P}_{(X^{n},Y^{n})\sim P_{X}^{\otimes n}\otimes P_{Y}^{\otimes n}}\{\mathcal{E}(T_{X^{n}},T_{Y^{n}})>\alpha\}.\label{eq:-16}
\end{align}
Determining the asymptotics of the probability in \eqref{eq:-16}
is just the empirical OT problem mentioned above, which involves only
one OT problem in Monge--Kantorovich's sense. In contrast, besides
Monge--Kantorovich's OT problem which acts as the inner subproblem,
our nested formula also involves Strassen's OT problem which acts
the outer subproblem. By \eqref{eq:-16}, our results in this paper
form lower bounds for the empirical OT problem.

\subsection{\label{subsec:Applications}Applications}

Beyond the theoretical interest of the problem, we would like to emphasize
the potential impact on information-theoretic applications of our
results. Here we provide an application to the covert reconstruction
problem in information-theoretic security \cite{bash2012limits,bash2015quantum,wang2016fundamental,bloch2016covert,yu2018asymptotic}.
Consider two terminals: a (legitimate) user and an eavesdropper. The
user observes a stationary memoryless stochastic process (also known
as a \emph{source}) $\{X_{i}\}$ with each $X_{i}~\sim P_{X}$, and
he/she wants to produce a reconstruction process $\{\hat{X}_{i}\}$,
i.e., a distorted version of the source. However, the reconstruction
device is being overheard by an eavesdropper all the time, no matter
whether the source is being reconstructed or not. When nothing is
being reconstructed, the process overhead by the eavesdropper is assumed
to be another stationary memoryless stochastic process (white noise
or a meaningless signal used to confuse the eavesdropper) $\{Y_{i}\}$
with each $Y_{i}~\sim P_{Y}$. The eavesdropper aims at detecting
whether there is a source being reconstructed at the current time
according to the distribution of the process he/she is observing.
Specifically, if the process that he/she is observing follows a distribution
distinct from the one of $\{Y_{i}\}$, then he/she will claim that
the source is being reconstructed; otherwise, he/she will claim that
the source is not being reconstructed. To avoid the eavesdropper to
detect the reconstruction successfully, 
the reconstruction of $\{X_{i}\}$ produced by the user must follow
the distribution same as $\{Y_{i}\}$. If we consider the cost function
$c$ as a measure of distortion, then the excess-cost probability
is a measure of distortion as well. In fact, the excess-cost probability
is also known as the \emph{excess-distortion probability}, which is
an important measure of distortion in information theory. For the
convert reconstruction problem above, what is the minimum excess-distortion
probability? It is easily checked that the minimum excess-distortion
probability for the first $n$ random variables of the source is $\mathcal{G}_{\alpha}^{(n)}(P_{X},P_{Y})$.
Hence, our results characterize the asymptotic behavior of the excess-distortion
probability for this problem. Furthermore, other applications of the
optimization problems over couplings to information theory can be
found in \cite{yu2018asymptotic}.

\subsection{\label{subsec:Notations}Notations and Organization}

As mentioned at the beginning of the introduction, $(\mathcal{X},\tau_{1})$
and $(\mathcal{Y},\tau_{2})$ are Polish spaces, and $P_{X}$ and
$P_{Y}$ are two probability measures (or distributions) defined respectively
on $\mathcal{X}$ and $\mathcal{Y}$. Here $P_{X}$ and $P_{Y}$ can
be thought of as the distributions of two random variables respectively
taking values in $\mathcal{X}$ and $\mathcal{Y}$. 
 We use $P_{X}\otimes P_{Y}$ to denote the product of $P_{X}$ and
$P_{Y}$, and $P_{X}^{\otimes n}$ (resp. $P_{Y}^{\otimes n}$) to
denote the $n$-fold product of $P_{X}$ (resp. $P_{Y}$). Throughout
this paper, for a topological space $(\mathcal{Z},\tau)$, we use
$\Sigma(\mathcal{Z},\tau)$ or simply $\Sigma(\mathcal{Z})$ to denote
the Borel $\sigma$-algebra on $\mathcal{Z}$ generated by the topology
$\tau$. Hence $(\mathcal{Z},\Sigma(\mathcal{Z}))$ forms a measurable
space. For this measurable space, we denote the set of probability
measures on $(\mathcal{Z},\Sigma(\mathcal{Z}))$ as $\mathcal{P}(\mathcal{Z},\Sigma(\mathcal{Z}))$
or simply $\mathcal{P}(\mathcal{Z})$. If we equip $\mathcal{P}(\mathcal{Z})$
with the weak topology, then the resultant space is a Polish space
as well. For brevity, we also denote it as $(\mathcal{P}(\mathcal{Z}),\Sigma(\mathcal{P}(\mathcal{Z})))$.

We denote $x^{n}=(x_{1},x_{2},...,x_{n})\in\mathcal{X}^{n}$ as a
sequence in $\mathcal{X}^{n}$. We use $T_{X}$ and $T_{Y}$ to respectively
denote empirical measures of sequences in $\mathcal{X}^{n}$ and $\mathcal{Y}^{n}$,
and $T_{XY}$ to denote an empirical joint measure of a pair of sequences
in $\mathcal{X}^{n}\times\mathcal{Y}^{n}$. 
We denote $\ell_{1}:x^{n}\in\mathcal{X}^{n}\mapsto T_{x^{n}}$
and $\ell_{2}:y^{n}\in\mathcal{Y}^{n}\mapsto T_{y^{n}}$ as the empirical
measure  functions, and  denote $\ell :(x^{n},y^n)\in\mathcal{X}^{n} \times \mathcal{Y}^{n} \mapsto T_{x^{n},y^{n}}$
 as the joint empirical
measure  function. 
For $P_{X}\in\mathcal{P}(\mathcal{X})$, denote $\mu_{n}$
as the law of the empirical measure $\ell_{1}(X^n)$ of $X^{n}\sim P_{X}^{\otimes n}$,
which means that $\mu_{n}$ is the push-forward measure 
$
\mu_{n} =P_{X}^{\otimes n} \circ \ell_{1}^{-1}
$. 
Obviously, $\mu_{n}$ is concentrated on $\mathcal{P}_{n}(\mathcal{X})$.
Similarly, for $P_{Y}\in\mathcal{P}(\mathcal{Y})$, denote $\nu_{n}$
as the law of the empirical measure of $Y^{n}\sim P_{Y}^{\otimes n}$.

We use $B_{\delta}(z):=\{z'\in\mathcal{Z}:d(z,z')<\delta\}$ and $B_{\leq\delta}(z):=\{z'\in\mathcal{Z}:d(z,z')\leq\delta\}$
to respectively denote an open ball and a closed ball. We use $\overline{A}$,
$A^{o}$, and $A^{c}:=\mathcal{Z}\backslash A$ to respectively denote
the closure, interior, and complement of the set $A$. Denote the
sublevel set of the relative entropy (or the divergence ``ball'')
as $D_{\leq\epsilon}(P_{X}):=\{Q_{X}:D(Q_{X}\|P_{X})\le\epsilon\}$
for $\epsilon\ge0$. As defined above, the Lévy--Prokhorov metric
on $\mathcal{P}(\mathcal{X})$ is $\mathtt{L}_{1}(Q_{X}',Q_{X})=\inf\{\delta:Q_{X}'(A)\le Q_{X}(A_{\delta})+\delta,\forall\textrm{ closed }A\subseteq\mathcal{X}\}$,
which is compatible with the weak topology. This metric, the TV distance,
and the relative entropy admit the following relation: For any $Q_{X},P_{X}$,
\begin{equation}
\sqrt{2D(Q_{X}\|P_{X})}\ge\|Q_{X}-P_{X}\|\ge\mathtt{L}_{1}(Q_{X},P_{X}),\label{eq:-35}
\end{equation}
which implies for $\epsilon\ge0$, 
\begin{equation}
D_{\leq\sqrt{2\epsilon}}(P_{X})\subseteq B_{\leq\epsilon}(P_{X}).\label{eq:D-B}
\end{equation}
The first inequality in \eqref{eq:-35} is known as Pinsker's inequality,
and the second inequality follows by definition.

We use $f(n,x)=o_{n|x}(1)$ to denote that given each $x$, $f(n,x)\to0$
pointwise as $n\to+\infty$. 
We denote $\inf\emptyset:=+\infty,\;\sup\emptyset:=-\infty$, and
$[k]:=\{1,2,...,k\}$. We denote $\|\cdot\|_{q}$ as the $\ell_{q}$-norm.

This paper is organized as follows. In Section 2, we first provide
the binary example to further illustrate our main results. In Section
3-6, we provide the proofs for the nested formula and the LDP, MDP,
and CLT results, respectively. Besides, some basic lemmas are provided
in Appendix \ref{sec:Basic-Lemmas}, and the proofs of some other
useful lemmas are provided in Appendices \ref{sec:Proof-of-Lemma-continuity}
and \ref{sec:Proof-of-Lemma-setA}.

\section{Binary Example}

\label{sec:int_exam}


To further illustrate our main results, we now focus on the binary
alphabet case, i.e., $\mathcal{X}=\mathcal{Y}=\{0,1\}$. We assume
$P_{X}=\mathrm{Bern}(a)$ and $P_{Y}=\mathrm{Bern}(b)$, where $0\le a\le b\le1$.
Consider the Hamming distance as the cost function, i.e., $c(x,y)=\bone_{x\neq y}$.
For this case, by \eqref{eqn:maxcoupling}, $\mathcal{E}(Q_{X},Q_{Y})$
coincides with the TV distance between $Q_{X},Q_{Y}$. For the case
$a=b$, $\mathcal{G}_{\alpha}^{(n)}(P_{X},P_{Y})$ is attained by
the identity coupling $P_{X}^{\otimes n}(x^{n})\bone_{y^{n}=x^{n}}$,
and for this case, 
$\mathcal{G}_{\alpha}^{(n)}(P_{X},P_{Y})=\bone_{\alpha<0}$ holds
for all $n\ge1$. In the following, we focus on the case $0\le a<b\le1$,
and apply Theorems \ref{thm:LDP}, \ref{thm:MDP}, and \ref{thm:CLT}
to this case. We obtain explicit expressions or bounds for the asymptotics
of $\mathcal{G}_{\alpha}^{(n)}(P_{X},P_{Y})$ in large deviations,
moderate deviations, and central limit regimes.

\subsection{Large Deviations Principle}

For distributions $Q_{X}=\mathrm{Bern}(a')$ and $Q_{Y}=\mathrm{Bern}(b')$,
we have 
\begin{equation}
\mathcal{E}(Q_{X},Q_{Y})=\min_{P_{XY}\in\Pi(Q_{X},Q_{Y})}\mathbb{E}[c(X,Y)]=|b'-a'|.\label{eq:-85}
\end{equation}
The minimum in $\mathcal{\mathcal{E}}(Q_{X},Q_{Y})$ is uniquely attained
by 
\[
Q_{XY}=\begin{cases}
\Big[\begin{array}{cc}
\overline{b'} & b'-a'\\
0 & a'
\end{array}\Big], & a'\leq b',\\
\Big[\begin{array}{cc}
\overline{a'} & 0\\
a'-b' & b'
\end{array}\Big], & a'>b'.
\end{cases}
\]
Here for a number $t\in[0,1]$, we define $\overline{t}:=1-t$.

Setting $a'\leftarrow a,b'\leftarrow b$ in \eqref{eq:-85}, we obtain
$\mathcal{\mathcal{E}}(P_{X},P_{Y})=b-a$ and the minimum in $\mathcal{E}(P_{X},P_{Y})$
is uniquely attained by $P_{XY}=\Big[\begin{array}{cc}
\overline{b} & b-a\\
0 & a
\end{array}\Big].$ For $a',a\in[0,1]$, denote $D(a'\|a):=D(\mathrm{Bern}(a')\|\mathrm{Bern}(a))$.
We have the following corollary to Theorem \ref{thm:LDP}. 
\begin{cor}[LDP for Binary OT]
\label{cor:binaryLDP} Given two Bernoulli distributions $P_{X}=\mathrm{Bern}(a)$
and $P_{Y}=\mathrm{Bern}(b)$ with $0\le a<b\le1$, we have: 
\begin{enumerate}
\item If $0<\alpha<b-a$, then 
\begin{align*}
\lim_{n\to\infty}-\frac{1}{n}\log(1-\mathcal{G}_{\alpha}^{(n)}(P_{X},P_{Y})) & =D(a^{*}\|a),
\end{align*}
where $a^{*}$ denotes the unique solution to the equation $D(a'+\alpha\|b)=D(a'\|a)$
in $[a,b-\alpha]$ with $a'$ unknown. 
\item If $b-a<\alpha<1$, then \eqref{eq:LD2} with $g(\alpha)=\min\{D(a^{*}\|a),D(b^{*}\|b)\}$
holds, 
where $a^{*}$ denotes the maximum of the two solutions to the equation
$D(a'+\alpha\|b)=D(a'\|a)$ (with $a'$ unknown) such that $0<a'\le b-\alpha$
(if there is only one or no such solution, then $D(a^{*}\|a):=+\infty$),
and $b^{*}$ denotes the minimum of the two solutions to the equation
$D(b'\|b)=D(b'-\alpha\|a)$ (with $b'$ unknown) such that $a+\alpha\le b'<1$
(if there is only one or no such solution, then $D(b^{*}\|b):=+\infty$). 
\end{enumerate}
\end{cor}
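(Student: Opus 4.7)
The plan is to deduce the corollary directly from Theorem \ref{thm:LDP} by specializing everything to binary alphabets with Hamming cost. First I would verify the two standing assumptions: the cost $c(x,y) = 1\{x \neq y\}$ is (trivially) lower semi-continuous on a finite space, and the set $\{c > \alpha\}$ is non-empty whenever $\alpha < 1$, so Assumptions \ref{ass:Lower-Semi-Continity-of} and \ref{ass:Nonempty-Condition-We} hold; UCOTF (Assumption \ref{ass:Uniform-Continuity-of}) follows from the finite-alphabet case already noted in the paper. The crucial input is the closed-form $\mathcal{E}(\mathrm{Bern}(a'),\mathrm{Bern}(b')) = |b'-a'|$ established in \eqref{eq:-85}, which reduces the variational problems $f(\alpha)$ and $g(\alpha)$ from measure-space optimization to two-variable optimization over $(a',b') \in [0,1]^2$.

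For Part 1 (the regime $\alpha < b-a$), I would show that $f(\alpha) = \inf\{\max(D(a'\|a), D(b'\|b)) : |b'-a'| \le \alpha\}$. Since $(a,b)$ is infeasible (as $b-a > \alpha$) and lies above the boundary line $b' = a' + \alpha$, convexity of $D(\cdot\|a)$ and $D(\cdot\|b)$ together with their monotonicity on $[0,a]$, $[a,1]$, $[0,b]$, $[b,1]$ force the optimum to lie on $b' = a' + \alpha$ with $a' \in [a, b-\alpha]$. On this interval $D(a'\|a)$ is strictly increasing while $D(a'+\alpha\|b)$ is strictly decreasing, so $\max\{D(a'\|a),\,D(a'+\alpha\|b)\}$ is strictly quasi-convex and minimized at the unique crossing point $a^*$ satisfying $D(a'+\alpha\|b) = D(a'\|a)$. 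Uniqueness of $a^*$ follows from the strict opposite monotonicities. This yields $f(\alpha) = D(a^*\|a)$.

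For Part 2 (the regime $\alpha > b-a$), I would write $g(\alpha) = \min\{g_{P_X,P_Y}(\alpha), g_{P_Y,P_X}(\alpha)\}$ and evaluate each term separately. For $g_{P_X,P_Y}(\alpha)$, fix $a'$ and note that the inner sublevel set $\{b' : D(b'\|b) \le D(a'\|a)\}$ is a closed interval $[b_-,b_+] \ni b$; the inner infimum $\min_{b' \in [b_-,b_+]} |b'-a'|$ is the distance from $a'$ to this interval. Requiring this distance to exceed $\alpha$ splits into the two geometric cases $a' < b_- - \alpha$ and $a' > b_+ + \alpha$; using the monotonicity of $D(\cdot\|b)$ on each side of $b$, these translate into $a' \le b-\alpha$ together with $D(a'+\alpha\|b) > D(a'\|a)$ (respectively, the mirror condition). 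Minimizing $D(a'\|a)$ over the open feasible region, continuity places the infimum at the boundary equation $D(a'+\alpha\|b) = D(a'\|a)$; since $D(\cdot\|a)$ is decreasing on $[0,a]$ and the feasible $a'$ satisfy $a' \le b-\alpha < a$, the smallest value is attained at the \emph{largest} admissible root $a^*$, giving $D(a^*\|a)$ (with the convention $+\infty$ if no such root exists). A symmetric analysis for $g_{P_Y,P_X}(\alpha)$, obtained by exchanging the roles of $(a,b,a',b')$, produces $D(b^*\|b)$, where $b^*$ is the smallest root of $D(b'\|b) = D(b'-\alpha\|a)$ with $b' \ge a+\alpha$.

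The main obstacle is the book-keeping in Part 2, because for each of $g_{P_X,P_Y}$ and $g_{P_Y,P_X}$ one must verify that among the two geometric sub-cases only the one singled out in the corollary is operative, and that any contribution from the opposite side is dominated by the other summand in $\min\{D(a^*\|a),D(b^*\|b)\}$. This reduces to a monotonicity comparison between the two boundary equations, exploiting $a \le b \le 1/2$. I would also handle the degenerate endpoint situations ($b-\alpha < 0$ or $a+\alpha > 1$, and the case where the transcendental equation has no admissible root) by the explicit convention $D(a^*\|a):=+\infty$ (resp.\ $D(b^*\|b):=+\infty$) stated in the corollary, which is consistent with $\inf\emptyset = +\infty$ and causes the corresponding branch simply to drop out of the minimum.
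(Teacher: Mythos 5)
Your proposal is correct and follows essentially the same route as the paper's (very terse) proof: specialize Theorem~\ref{thm:LDP} to the binary alphabet, substitute the closed form $\mathcal{E}(\mathrm{Bern}(a'),\mathrm{Bern}(b'))=|b'-a'|$, and solve the resulting two-variable optimizations for $f(\alpha)$ and $g_{P_X,P_Y}(\alpha)$, $g_{P_Y,P_X}(\alpha)$. You supply more of the supporting detail than the paper does (in particular, identifying the two geometric sub-cases for the inner infimum in Part~2 and noting the need to rule one of them out using $a\le b\le\tfrac12$), but the decomposition and the key reduction are the same; in fact, a short derivative comparison shows that the ``opposite-side'' sub-case you worry about is not merely dominated but actually empty, which justifies the paper's direct identifications $g_{P_X,P_Y}(\alpha)=D(a^*\|a)$ and $g_{P_Y,P_X}(\alpha)=D(b^*\|b)$.
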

\begin{proof} The first statement of this corollary follows by observing
that 
\begin{align*}
f(\alpha) & =\min_{a',b':|b'-a'|\le\alpha}\max\{D(b'\|b),D(a'\|a)\}\\
 & =\min_{a'\in[a,b-\alpha]}\max\{D(a'+\alpha\|b),D(a'\|a)\}\\
 & =D(a^{*}\|a).
\end{align*}

We next prove the second statement. Observe that 
\begin{align}
g_{P_{X},P_{Y}}(\alpha) & =\inf_{a'}D(a'\|a)\label{eq:-31}
\end{align}
where the infimum is taken over all $a'$ such that $\min_{b':D(b'\|b)\leq D(a'\|a)}|b'-a'|>\alpha$,
or equivalently, $\min_{b':|b'-a'|\le\alpha}D(b'\|b)>D(a'\|a)$. It
is easily seen that the optimal $a'$ attaining the infimum in \eqref{eq:-31}
is no greater than $b-\alpha$, and for this case, $\min_{b':|b'-a'|\le\alpha}D(b'\|b)=D(a'+\alpha\|b)$.
Hence, 
\[
g_{P_{X},P_{Y}}(\alpha)=\inf_{a'\le b-\alpha:D(a'+\alpha\|b)>D(a'\|a)}D(a'\|a).
\]
However, if $\alpha>b$, then $g_{P_{X},P_{Y}}(\alpha)=+\infty$.
If there are two solutions to the equation $D(a'+\alpha\|b)=D(a'\|a)$
(with $a'$ unknown) such that $0<a'\le b-\alpha$, then $g_{P_{X},P_{Y}}(\alpha)=D(a^{*}\|a)$
where $a^{*}$ is the maximum among the two solutions. If there is
only one or no solution, then $g_{P_{X},P_{Y}}(\alpha)=+\infty$.

Similarly, 
\[
g_{P_{Y},P_{X}}(\alpha)=\inf_{b'\ge a+\alpha:D(b'-\alpha\|a)>D(b'\|b)}D(b'\|b).
\]
For the case $\alpha>1-a$, then $g_{P_{Y},P_{X}}(\alpha)=+\infty$.
If there are two solutions to the equation $D(b'\|b)=D(b'-\alpha\|a)$
(with $b'$ unknown) such that $b-\alpha\le b'<1$, then $g_{P_{Y},P_{X}}(\alpha)=D(b^{*}\|b)$
where $b^{*}$ is the minimum among the two solutions. If there is
only one or no solution, then $g_{P_{Y},P_{X}}(\alpha)=+\infty$.
 \end{proof} Corollary \ref{cor:binaryLDP} is illustrated in Fig.
\ref{fig:The-LDP-for}. 
\begin{figure}
\subfloat[\label{fig:The-LDP-for}]{\centering \includegraphics[width=0.5\columnwidth]{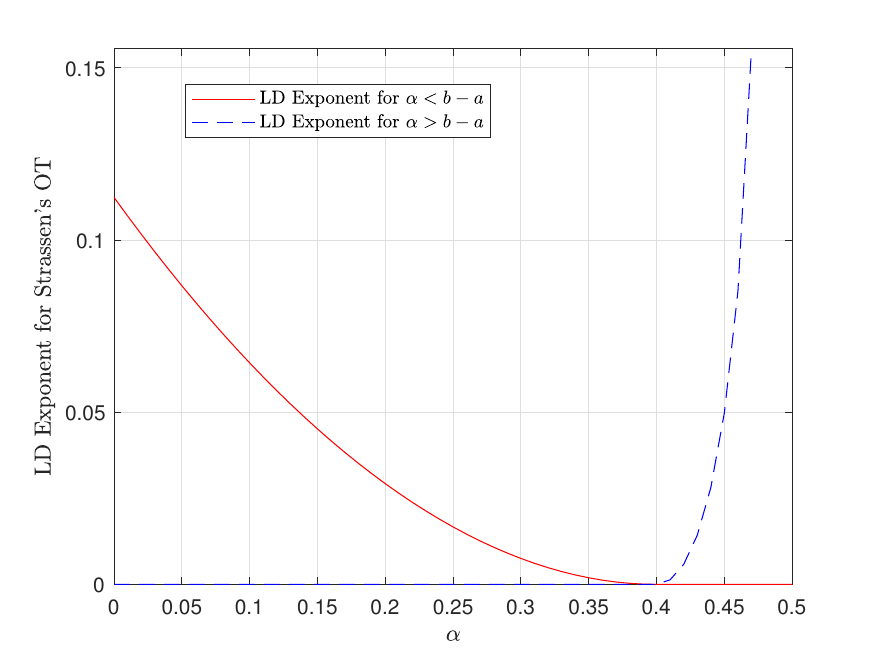}

}\subfloat[\label{fig:The-LDP-for-1}]{\centering \includegraphics[width=0.5\columnwidth]{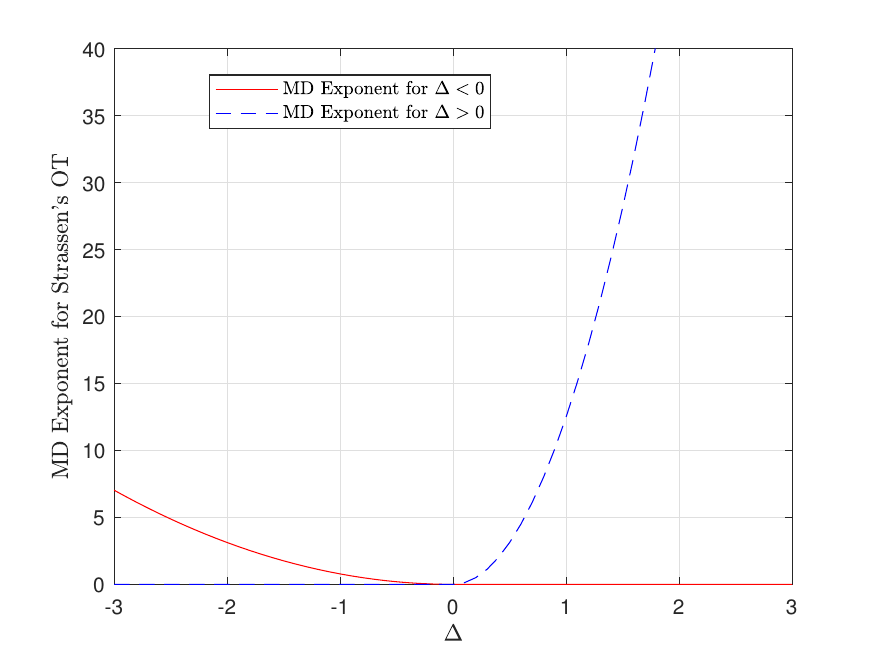}

}

\subfloat[\label{fig:The-LDP-for-1-1}]{\centering \includegraphics[width=0.5\columnwidth]{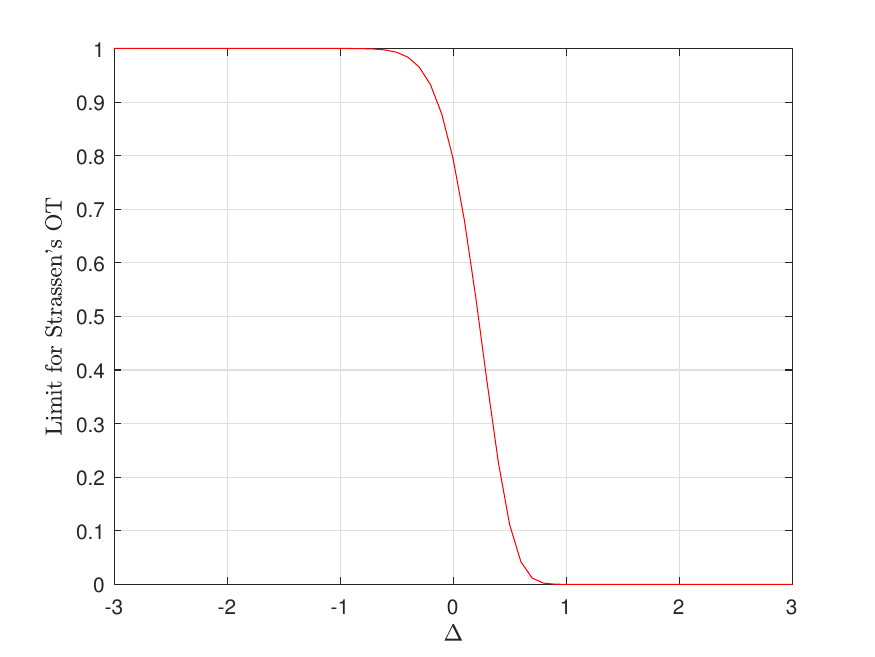}

}\caption{The LDP, MDP, and CLT for $P_{X}=\mathrm{Bern}(a)$ and $P_{Y}=\mathrm{Bern}(b)$
with $a=0.1$ and $b=0.5$.}
\end{figure}

\subsection{Moderate Deviations Principle}

We now focus on the moderate deviation regime. Let $P_{X}=\mathrm{Bern}(a)$
and $P_{Y}=\mathrm{Bern}(b)$ with $0<a<b<1$, and $\beta_{X}=(-a',a')$
and $\beta_{Y}=(-b',b')$. For this case, $\frac{1}{2}\sum_{x}\frac{\beta_{X}(x)^{2}}{P_{X}(x)}=\frac{a'^{2}}{2(a-a^{2})}$,
$\mathcal{S}=\{(0,0),(1,1),(0,1)\}$, and 
\begin{align*}
\theta(\beta_{X},\beta_{Y}) & =\min_{\begin{subarray}{c}
\beta_{XY}\in\overline{\Pi}(\beta_{X},\beta_{Y}),\\
\{(x,y):\beta_{XY}(x,y)<0\}\subseteq\mathcal{S}
\end{subarray}}\sum_{x,y}\beta_{XY}(x,y)c(x,y)=b'-a'.
\end{align*}

We have the following corollary to Theorem \ref{thm:MDP}. The proof
is similar to that of Corollary \ref{cor:binaryLDP}, and hence omitted
here. 
\begin{cor}[MDP for Binary OT]
\label{cor:binaryMDP} Let $P_{X}=\mathrm{Bern}(a)$ and $P_{Y}=\mathrm{Bern}(b)$
with $0<a<b<1$. Let $\alpha_{0}=b-a$. Let $\{a_{n}\}$ be a positive
sequence such that $a_{n}\to0$ and $na_{n}\to\infty$ as $n\to\infty$.
The following hold. 
\begin{enumerate}
\item If $\Delta<0$, then 
\begin{align*}
\lim_{n\to\infty}-a_{n}\log(1-\mathcal{G}_{\alpha_{0}+{\Delta}/{\sqrt{na_{n}}}}^{(n)}(P_{X},P_{Y})) & =\frac{1}{2}\Big(\frac{\Delta}{\sqrt{a-a^{2}}+\sqrt{b-b^{2}}}\Big)^{2}.
\end{align*}
\item If $\Delta>0$, then 
\[
\lim_{n\to\infty}-a_{n}\log\mathcal{G}_{\alpha_{0}+{\Delta}/{\sqrt{na_{n}}}}^{(n)}(P_{X},P_{Y})=\frac{1}{2}\Big(\frac{\Delta}{\sqrt{b-b^{2}}-\sqrt{a-a^{2}}}\Big)^{2}.
\]
\end{enumerate}
\end{cor}
Corollary \ref{cor:binaryMDP} is illustrated in Fig. \ref{fig:The-LDP-for-1}.

\subsection{Central Limit Theorem }

Recall the definition of $\mathbf{U}$ in Subsection \ref{subsec:Main-Result-3}.
For the binary case, $\mathbb{E}[\mathbf{U}]=(1-a,a)$ and $\mathrm{Cov}(\mathbf{U})=(a-a^{2})\begin{bmatrix}1 & -1\\
-1 & 1
\end{bmatrix}.$ Denote $\sigma_{X}^{2}:=\mathrm{Var}(U_{0})=a-a^{2}.$ The probability
density function of $U_{0}\sim\Phi_{U_{0}}:=\mathcal{N}(0,\sigma_{X}^{2})$
is 
\[
\phi_{X}(a')={\displaystyle \frac{1}{\sqrt{2\pi\sigma_{X}^{2}}}e^{-\frac{a'^{2}}{2\sigma_{X}^{2}}}},
\]
and the cumulative distribution function is 
\[
F_{X}(a')=\int_{-\infty}^{a'}\phi_{X}(t)\mathrm{d}t.
\]
For $P_{Y}$, define $\sigma_{Y}^{2}$, $\Phi_{V_{0}}$, $\phi_{Y}$,
and $F_{Y}$ similarly. Observe that $U_{0}+U_{1}=1$. Hence $\Phi_{P_{X}}$
is determined by $F_{X}$. Similarly, $\Phi_{P_{Y}}$ is determined
by $F_{Y}$. Hence by \eqref{eq:-20-6}, we have 
\begin{align}
\Lambda_{\Delta}(P_{X},P_{Y}) & =\sup_{\textrm{closed }A'\subseteq\mathbb{R}}\Phi_{U_{0}}(A')-\Phi_{V_{0}}(\bigcup_{a'\in A'}(-\infty,a'+\Delta])\nonumber \\
 & =\sup_{a'}F_{X}(a')-F_{Y}(a'+\Delta),\label{eq:-86}
\end{align}
where \eqref{eq:-86} follows since the difference $\Phi_{U_{0}}(A')-\Phi_{V_{0}}(\bigcup_{a'\in A'}(-\infty,a'+\Delta])$
is maximized only when $A'=(-\infty,a']$ for some $a'\in\mathbb{R}$.

To compute the optimal value of $a'$ in \eqref{eq:-86}, we need
the following lemma, which is derived by simple algebraic manipulations
and hence whose proof is omitted. 
\begin{lem}
If $\sigma_{X}^{2}=\sigma_{Y}^{2}$, then $a'=-\Delta/2$ is the unique
solution to the equation 
\begin{equation}
\phi_{X}(a')=\phi_{Y}(a'+\Delta).\label{eq:-87}
\end{equation}
If $\sigma_{X}^{2}\neq\sigma_{Y}^{2}$, then the equation \eqref{eq:-87}
has two solutions: 
\begin{align*}
a' & =\frac{-\sigma_{X}^{2}\Delta\pm\sigma_{X}\sigma_{Y}\sqrt{\Delta^{2}+2(\sigma_{X}^{2}-\sigma_{Y}^{2})\log\frac{\sigma_{X}}{\sigma_{Y}}}}{\sigma_{X}^{2}-\sigma_{Y}^{2}}.
\end{align*}
\end{lem}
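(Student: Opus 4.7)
The statement reduces to a direct algebraic manipulation, since both sides of \eqref{eq:-87} are explicit Gaussian densities. My plan is to take logarithms and reduce to a polynomial equation, then handle the equal-variance and unequal-variance cases separately.

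First, I would write out the equation $\varphi_X(a') = \varphi_Y(a'+\Delta)$ explicitly, multiply through by $\sqrt{2\pi}$, and take $\log$ on both sides. This yields
\begin{equation*}
-\tfrac{1}{2}\log\sigma_X^{2} - \frac{a'^{2}}{2\sigma_X^{2}} \;=\; -\tfrac{1}{2}\log\sigma_Y^{2} - \frac{(a'+\Delta)^{2}}{2\sigma_Y^{2}},
\end{equation*}
which rearranges into
\begin{equation*}
\sigma_X^{2}(a'+\Delta)^{2} - \sigma_Y^{2}\,a'^{2} \;=\; 2\sigma_X^{2}\sigma_Y^{2}\log(\sigma_X/\sigma_Y).
\end{equation*}

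Next I would expand $(a'+\Delta)^{2}$ and collect terms in $a'$ to obtain the quadratic
\begin{equation*}
(\sigma_X^{2}-\sigma_Y^{2})\,a'^{2} + 2\sigma_X^{2}\Delta\,a' + \sigma_X^{2}\Delta^{2} - 2\sigma_X^{2}\sigma_Y^{2}\log(\sigma_X/\sigma_Y) \;=\; 0.
\end{equation*}
In the equal-variance case $\sigma_X^{2}=\sigma_Y^{2}$, the leading coefficient and the logarithm both vanish, leaving $2\sigma_X^{2}\Delta a' + \sigma_X^{2}\Delta^{2} = 0$, whose unique root (for $\Delta\neq 0$) is $a' = -\Delta/2$, as claimed. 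In the unequal-variance case $\sigma_X^{2}\neq\sigma_Y^{2}$, I would apply the quadratic formula and simplify the discriminant: a short computation shows
\begin{equation*}
\sigma_X^{4}\Delta^{2} - (\sigma_X^{2}-\sigma_Y^{2})\bigl[\sigma_X^{2}\Delta^{2} - 2\sigma_X^{2}\sigma_Y^{2}\log(\sigma_X/\sigma_Y)\bigr] \;=\; \sigma_X^{2}\sigma_Y^{2}\bigl[\Delta^{2} + 2(\sigma_X^{2}-\sigma_Y^{2})\log(\sigma_X/\sigma_Y)\bigr],
\end{equation*}
which upon taking the square root reproduces exactly the $\pm$ formula in the lemma's statement.

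There is no real obstacle here; the only thing to be mildly careful about is the sign and the collection of the log term. The discriminant is guaranteed to be nonnegative whenever real solutions to \eqref{eq:-87} are claimed to exist (if it is negative the two Gaussian densities never coincide), so I would include a brief comment that the lemma tacitly treats the solvable range of $\Delta$. Since the paper already marks this as a routine computation whose proof is omitted, two short displays covering the two cases suffice.
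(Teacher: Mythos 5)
Your computation is correct and is exactly the routine algebraic manipulation the paper alludes to when it says the lemma ``is derived by simple algebraic manipulations and hence whose proof is omitted'': take logs, clear denominators, collect into a quadratic in $a'$, and apply the quadratic formula. Your reduction of the discriminant to $4\sigma_X^2\sigma_Y^2\bigl[\Delta^2+2(\sigma_X^2-\sigma_Y^2)\log(\sigma_X/\sigma_Y)\bigr]$ is right.

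One small remark: the hedging about a ``solvable range of $\Delta$'' is unnecessary. Since $\sigma_X^2-\sigma_Y^2$ and $\log(\sigma_X/\sigma_Y)$ always have the same sign, the term $2(\sigma_X^2-\sigma_Y^2)\log(\sigma_X/\sigma_Y)$ is nonnegative, and in the case $\sigma_X^2\neq\sigma_Y^2$ it is strictly positive; hence the discriminant is strictly positive for every $\Delta$ and the two real solutions always exist. In the equal-variance case you correctly flag that uniqueness requires $\Delta\neq 0$.
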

If $a=b$ or $a+b=1$, then $\sigma_{X}^{2}=\sigma_{Y}^{2}$. By the
lemma above, for this case, $a'=-\Delta/2$ is the unique solution
of \eqref{eq:-87}. Hence for this case, 
\begin{align*}
\Lambda_{\Delta}(P_{X},P_{Y}) & =\begin{cases}
F_{X}(-\Delta/2)-F_{Y}(\Delta/2) & \Delta\leq0\\
0 & \Delta>0
\end{cases}.
\end{align*}

If $a\neq b$ and $a+b\neq1$, then the equation \eqref{eq:-87} has
two solutions. Denote them respectively as $a_{1}'(\Delta)$ and $a_{2}'(\Delta)$
such that $a_{1}'(\Delta)\le a_{2}'(\Delta)$. If additionally $a(1-a)\le b(1-b)$,
then $a_{2}'(\Delta)$ is the maximizer for the supremum in \eqref{eq:-86},
which implies that $\Lambda_{\Delta}(P_{X},P_{Y})=F_{X}(a_{2}'(\Delta))-F_{Y}(a_{2}'(\Delta)+\Delta).$
Similarly, if $a(1-a)>b(1-b)$, then $\Lambda_{\Delta}(P_{X},P_{Y})=F_{X}(a_{1}'(\Delta))-F_{Y}(a_{1}'(\Delta)+\Delta).$
Hence, we have the following corollary to Theorem \ref{thm:CLT}. 
\begin{cor}[CLT for Binary OT]
\label{cor:binaryCLT} Let $P_{X}=\mathrm{Bern}(a)$ and $P_{Y}=\mathrm{Bern}(b)$
with $0<a<b<1$. Let $\alpha_{0}=b-a$. Then, we have 
\begin{align*}
\lim_{n\to\infty}\mathcal{G}_{\alpha_{0}+{\Delta}/{\sqrt{n}}}^{(n)}(P_{X},P_{Y}) & =\begin{cases}
(F_{X}(-\Delta/2)-F_{Y}(\Delta/2))\bone_{\Delta\leq0} & a=1-b\\
F_{X}(a_{2}'(\Delta))-F_{Y}(a_{2}'(\Delta)+\Delta) & a<1-b\\
F_{X}(a_{1}'(\Delta))-F_{Y}(a_{1}'(\Delta)+\Delta) & a>1-b
\end{cases}.
\end{align*}
\end{cor}
Corollary \ref{cor:binaryCLT} is illustrated in Fig. \ref{fig:The-LDP-for-1-1}.

\section{Proof of Theorem \ref{thm:nestedOT}}

\label{sec:Proof-of-Lemma-Product} In this section, we prove Theorem
\ref{thm:nestedOT}. It suffices to prove that $\mathcal{G}_{\alpha}^{(n)}(P_{X},P_{Y})$
is equal to the expression in \eqref{eq:nesteddual} since the other
two expressions follow by Strassen's duality in Theorem \ref{thm:strassen}.
More specifically, note that since $\mathcal{X}$ is Polish, the space
$\mathcal{P}(\mathcal{X})$ with the weak topology is also Polish
\cite[Theorem 6.2 and Theorem 6.5]{parthasarathy2005probability}.
Similarly, $\mathcal{P}(\mathcal{Y})$ with the weak topology is Polish
as well. On the other hand, by Lemma \ref{lem:OTconvexitycontinuity}
(in Appendix \ref{sec:Basic-Lemmas}), $(Q_{X},Q_{Y})\mapsto\mathcal{E}(Q_{X},Q_{Y})$
is lower semi-continuous. Assumption \ref{ass:Nonempty-Condition-We}
implies that $\{(Q_{X},Q_{Y}):\mathcal{E}(Q_{X},Q_{Y})>\alpha\}$
is nonempty. 
Applying Strassen's duality in Theorem \ref{thm:strassen}, we obtain
\eqref{eq:nestedOT} and \eqref{eq:nesteddual2}.

We next that $\mathcal{G}_{\alpha}^{(n)}(P_{X},P_{Y})$ is equal to
the expression in \eqref{eq:nesteddual}. By the Kantorovich duality
in Theorem \ref{thm:Kantorovich} and the fact that Strassen's OT
problem is a special case of Monge--Kantorovich's OT problem, it
is not difficult to show that Strassen's OT problem also admits the
following duality; see \cite[Proof of Theorem 1.27]{villani2003topics}.
\begin{equation}
\mathcal{G}_{\alpha}^{(n)}(P_{X},P_{Y})=\sup_{(\phi,\psi)\in\Psi}\int_{\mathcal{X}^{n}}\phi\ \mathrm{d}P_{X}^{\otimes n}+\int_{\mathcal{Y}^{n}}\psi\ \mathrm{d}P_{Y}^{\otimes n}\label{eq:-1}
\end{equation}
where $\Psi$ is the set of all pairs $(\phi,\psi)\in L^{1}(\mathcal{X}^{n})\times L^{1}(\mathcal{Y}^{n})$
such that 
\[
\begin{cases}
\phi(x^{n})+\psi(y^{n})\leq\bone_{c_{n}>\alpha}(x^{n},y^{n}),\;\forall(x^{n},y^{n}),\\
0\le\phi\le1,\qquad-1\le\psi\leq0,\\
\phi\textrm{ is upper semi-continuous}.
\end{cases}
\]
Note that $\Psi$ is convex. 

Observe that $P_{X}^{\otimes n}$, $P_{Y}^{\otimes n}$, and $c_{n}$
are permutation-invariant (or $n$-symmetric) in the sense that for
any permutation $\sigma$ of $[n]$, it holds that $P_{X}^{\otimes n}=P_{X}^{\otimes n}\circ\sigma^{-1},P_{Y}^{\otimes n}=P_{Y}^{\otimes n}\circ\sigma^{-1}$,
and $c_{n}=c_{n}\circ(\sigma^{-1},\sigma^{-1})$. Hence, we can additionally
assume $(\phi,\psi)$ is also permutation-invariant, since otherwise,
we can take average of $(\phi,\psi)\circ(\sigma^{-1},\sigma^{-1})$
over all permutation $\sigma$ of $[n]$. We denote $\overline{\Psi}$
as the set of $(\phi,\psi)\in\Psi$ such that $(\phi,\psi)$ is permutation-invariant
(i.e., $(\phi,\psi)=(\phi,\psi)\circ(\sigma^{-1},\sigma^{-1})$ for
any $\sigma$). Note that $\overline{\Psi}$ is still convex. Moreover,
$\overline{\Psi}$ can be represented as a convex combination of pairs
of indicators of measurable subsets. Here a set $A$ is said to be
\emph{permutation-invariant} if $x^{n}\in A$ if and only if its arbitrary
permutations belong to $A$ as well. 
\begin{lem}
\cite[Proof of Theorem 1.27]{villani2003topics} $\overline{\Psi}$
can be represented as a convex combination of pairs of the form $(\bone_{A},-\bone_{B})$
for permutation-invariant and measurable $A\subseteq\mathcal{X}^{n},B\subseteq\mathcal{Y}^{n}$
such that $A$ is closed and $\bone_{A}(x^{n})+\bone_{B}(y^{n})\leq\bone_{c_{n}>\alpha}(x^{n},y^{n}),\;\forall(x^{n},y^{n})$. 
\end{lem}
An original version of this lemma without the ``permutation-invariant''
condition was proven in \cite[Proof of Theorem 1.27]{villani2003topics}
by using the ``layer cake representation''. It is easy to check
that the proof still works when we impose the ``permutation-invariant''
condition. 

By the lemma above and observing that the objective function in \eqref{eq:-1}
is linear in $(\phi,\psi)$, we can rewrite \eqref{eq:-1} as 
\[
\mathcal{G}_{\alpha}^{(n)}(P_{X},P_{Y})=\sup_{A,B}P_{X}^{\otimes n}(A)+P_{Y}^{\otimes n}(B),
\]
where the supremization is taken over all pairs of permutation-invariant
$A\in\Sigma(\mathcal{X}^{n}),B\in\Sigma(\mathcal{Y}^{n})$ such that
\[
\begin{cases}
\bone_{A}(x^{n})+\bone_{B}(y^{n})\leq\bone_{c_{n}>\alpha}(x^{n},y^{n}),\;\forall(x^{n},y^{n})\\
A\textrm{ is closed}.
\end{cases}
\]
Recall that    $\ell_{1}:x^{n}\in\mathcal{X}^{n}\mapsto T_{x^{n}}$
and $\ell_{2}:y^{n}\in\mathcal{Y}^{n}\mapsto T_{y^{n}}$ denote the empirical
measure  functions. Then, any permutation-invariant sets $A,B$
can be written as $A=\ell_{1}^{-1}(E),B=\ell_{2}^{-1}(F)$ for some
$E\subseteq\mathcal{P}(\mathcal{X}),F\subseteq\mathcal{P}(\mathcal{Y})$.
Note that $E,F$ are not necessarily measurable with respect to the
$\sigma$-algebras induced by weak topologies. We can rewrite \eqref{eq:-1}
as 
\begin{equation}
\mathcal{G}_{\alpha}^{(n)}(P_{X},P_{Y})=\sup_{E,F}P_{X}^{\otimes n}(\ell_{1}^{-1}(E))+P_{Y}^{\otimes n}(\ell_{2}^{-1}(F)),\label{eq:-10}
\end{equation}
where the supremization is taken over all pairs of $E\subseteq\mathcal{P}(\mathcal{X}),F\subseteq\mathcal{P}(\mathcal{Y})$
such that 
\[
\begin{cases}
\bone_{E}(T_{X})+\bone_{F}(T_{Y})\leq\bone_{\mathcal{E}_{n}>\alpha}(T_{X},T_{Y}),\;\forall(T_{X},T_{Y})\\
\ell_{1}^{-1}(E)\textrm{ is closed},\ell_{2}^{-1}(F)\in\Sigma(\mathcal{Y}^{n}).
\end{cases}
\]

The function $\ell_{1},\ell_{2}$ are continuous. For any $E,F$ such
that $\ell_{1}^{-1}(E),\ell_{2}^{-1}(F)$ are Borel subsets of Polish
spaces $\mathcal{X}^{n},\mathcal{Y}^{n}$, we have that $E=\ell_{1}(\ell_{1}^{-1}(E)),F=\ell_{2}(\ell_{2}^{-1}(F))$
are analytic subsets of $\mathcal{P}(\mathcal{X}),\mathcal{P}(\mathcal{Y})$
respectively.  Since given a probability measure, analytic sets are universally measurable and
every measurable set in the completion of this probability measure space is the union of a Borel set
and a subset of a null set (of this probability measure), there exists a Borel set $E'\subseteq E$
such that $\mu_{n}(E')=P_{X}^{\otimes n}(\ell_{1}^{-1}(E))$. Similarly,
there exists a Borel set $F'\subseteq F$ such that $\nu_{n}(F')=P_{Y}^{\otimes n}(\ell_{2}^{-1}(F))$.

We  claim that $E$ is closed in $\mathcal{P}(\mathcal{X})$ if and
only if $\ell_{1}^{-1}(E)$ is closed in $\mathcal{X}^{n}$. We now
prove it. On one hand, since $\ell_{1}$ is continuous, for any closed
$E$ in $\mathcal{P}(\mathcal{X})$, $\ell_{1}^{-1}(E)$ is closed
in $\mathcal{X}^{n}$. On the other hand, we next show that for any
closed $A$ in $\mathcal{X}^{n}$, $\ell_{1}(A)$ is closed in $\mathcal{P}(\mathcal{X})$.
Let $\{T_{X}^{(k)}\}_{k\in\mathbb{N}}$ be a sequence of empirical
measures that belongs to $\ell_{1}(A)$ and converges to some $T_{X}$
(under the weak topology).  Any empirical measure $T_{X}$ can be
written as $T_{X}=\frac{1}{n}\sum_{i=1}^{n}\delta_{x_{i}}$ for some
sequence $x^{n}$. So does $T_{X}^{(k)}$, i.e., for each $k$, $T_{X}^{(k)}=\frac{1}{n}\sum_{i=1}^{n}\delta_{x_{i}^{(k)}}$
for some sequence $x^{n,(k)}$. Let $f:\mathcal{X}\to\mathbb{R}$
be a continuous bounded function given by $f(x)=\sum_{i\in[n]}[\delta-d(x_{i},x)]^{+}$
where $\delta>0$, $[t]^{+}:=\max\{t,0\}$, and $d$ is the metric
on $\mathcal{X}$.  By definition of the weak topology, $T_{X}^{(k)}\to T_{X}$
implies $\varint f\mathrm{d}T_{X}^{(k)}\to\varint f\mathrm{d}T_{X}$,
i.e., $\sum_{i=1}^{n}f(x_{i}^{(k)})\to n\delta$. This further implies
that  there exists a sequence of permutations $\left\{ \sigma_{k}\right\} $
of $[n]$ such that $\hat{x}_{i}^{(k)}\to x_{i}$ as $k\to\infty$
uniformly for all $i\in[n]$, where $\hat{x}^{n,(k)}:=x_{\sigma_{k}}^{n,(k)}$
denotes the rearrangement of $x_{\sigma_{k}}^{n,(k)}$ via $\sigma_{k}$.
Equivalently, in the product space, $\hat{x}^{n,(k)}\to x^{n}$ as
$k\to\infty$ under the product topology. Since $A$ is closed, we
know that $x^{n}\in A$. Hence, $T_{X}=\ell_{1}(x^{n})\in\ell_{1}(A)$.
That is, $\ell_{1}(A)$ is closed. 

By the claim above, 
\begin{equation}
\mathcal{G}_{\alpha}^{(n)}(P_{X},P_{Y})=\sup_{E,F}\mu_{n}(E)+\nu_{n}(F),\label{eq:-10-1}
\end{equation}
where the supremization is taken over all pairs of measurable $E\subseteq\mathcal{P}(\mathcal{X}),F\subseteq\mathcal{P}(\mathcal{Y})$
such that 
\begin{equation}
\begin{cases}
\bone_{E}(T_{X})+\bone_{F}(T_{Y})\leq\bone_{\mathcal{E}_{n}>\alpha}(T_{X},T_{Y}),\;\forall(T_{X},T_{Y})\\
E\textrm{ is closed}.
\end{cases}\label{eq:-21}
\end{equation}
By the inner regularity of the probability measures, without changing
the value of the supremization above, we can require both $E,F$ to
be closed. Moreover, the first condition in \eqref{eq:-21} is equivalent
to $\mathcal{E}(Q_{X},Q_{Y})>\alpha,\forall Q_{X}\in E,Q_{Y}\in F$.
Hence, we have \eqref{eq:nesteddual}. 
\begin{rem}
Theorem \ref{thm:nestedOT} can be also proven from the primal formulation
in \eqref{eq:nestedOT}. See the intuition given below Theorem \ref{thm:nestedOT}. 
\end{rem}

\section{Proof of Theorem \protect\ref{thm:LDP}}

\label{sec:Proof-of-Theorem-LDP} In this section, we prove Theorem
\protect\ref{thm:LDP}.

\subsection{Case of $\alpha<\mathcal{E}(P_{X},P_{Y})$}

In this subsection, we prove \eqref{eq:LD1}. To this end, it suffices
to prove the following result. Without the assumption of UCOTF, it
holds that for $\alpha<\mathcal{E}(P_{X},P_{Y})$, 
\begin{align}
f^{-}(\alpha) & \leq\liminf_{n\to\infty}-\frac{1}{n}\log(1-\mathcal{G}_{\alpha}^{(n)}(P_{X},P_{Y}))\label{eqn:-1}\\
 & \leq\limsup_{n\to\infty}-\frac{1}{n}\log(1-\mathcal{G}_{\alpha}^{(n)}(P_{X},P_{Y}))\leq f^{+}(\alpha),\label{eqn:-2}
\end{align}
where 
\begin{align}
f^{+}(\alpha) & :=\lim_{\epsilon\downarrow0}\inf_{\substack{Q_{X},Q_{Y}:\mathcal{E}(Q_{X}',Q_{Y}')\le\alpha,\\
\forall Q_{X}'\in B_{\epsilon}(Q_{X}),Q_{Y}'\in B_{\epsilon}(Q_{Y})
}
}\max\{D(Q_{Y}\|P_{Y}),D(Q_{X}\|P_{X})\}\label{eq:-99}\\
f^{-}(\alpha) & :=\lim_{\epsilon\downarrow0}\inf_{\substack{Q_{X},Q_{Y}:\mathcal{E}(Q_{X}',Q_{Y}')\le\alpha,\\
\exists Q_{X}'\in B_{\epsilon}(Q_{X}),Q_{Y}'\in B_{\epsilon}(Q_{Y})
}
}\max\{D(Q_{Y}\|P_{Y}),D(Q_{X}\|P_{X})\}\label{eq:-100}
\end{align}
with $B_{\epsilon}(\cdot)$ denoting a ball of radius $\epsilon$
under the Lévy--Prokhorov metric.

Equation \eqref{eq:LD1} is a consequence of \eqref{eqn:-1} and \eqref{eqn:-2}
as shown in the following. By the UCOTF assumption, $f^{-}(\alpha)\ge\lim_{\alpha'\downarrow\alpha}f(\alpha')$
and $f^{+}(\alpha)\le\lim_{\alpha'\uparrow\alpha}f(\alpha')$. On
the other hand, by the convexity of the relative entropy and Lemmas
\ref{lem:OTconvexitycontinuity} and \ref{lem:continuityofinf2} in
Appendix \ref{sec:Basic-Lemmas}, $f$ is continuous on $(c_{\min},+\infty)$.
Hence, \eqref{eq:LD1} holds. 


\subsubsection{\label{subsec:Lower-Bound:}Lower Bound}

By Lemma \ref{thm:nestedOT}, 
\begin{align}
1-\mathcal{G}_{\alpha}^{(n)}(P_{X},P_{Y}) & =\inf_{\textrm{closed }A,B:\mathcal{E}(Q_{X},Q_{Y})>\alpha,\forall Q_{X}\in A,Q_{Y}\in B}\mu_{n}(A^{c})+\nu_{n}(B^{c}).\label{eqn:1_G}
\end{align}
Let $E=D_{\le r}(P_{X})$ and $F=D_{\le r}(P_{Y})$ be two sublevel
sets of the relative entropies for $r>0$. Then by \cite[Theorem 20]{Erven},
$E$ and $F$ are compact. By the definition of compactness, for any
$\epsilon>0$, there exists a \emph{cover} $\{B_{\epsilon}(Q_{X,i})\}_{i=1}^{k_{1}}$
with a finite size $k_{1}$ for $E$. That is, there exists a positive
integer $k_{1}$ and a collection $\{B_{\epsilon}(Q_{X,i})\}_{i=1}^{k_{1}}$
of $k_{1}$ open balls in $\mathcal{P}(\mathcal{X})$ such that $E\subseteq E_{\epsilon}:=\bigcup_{i=1}^{k_{1}}B_{\epsilon}(Q_{X,i})$.
Similarly, there also exists another cover $\{B_{\epsilon}(Q_{Y,i})\}_{i=1}^{k_{2}}$
with a finite size $k_{2}$ for $F_{2}$. Define $F_{\epsilon}:=\bigcup_{i=1}^{k_{2}}B_{\epsilon}(Q_{Y,i})$.
Define $E_{\le\epsilon}:=\bigcup_{i=1}^{k_{1}}B_{\le\epsilon}(Q_{X,i})$
and $F_{\le\epsilon}:=\bigcup_{i=1}^{k_{2}}B_{\le\epsilon}(Q_{Y,i})$,
which are closed. 

We choose $r,\epsilon>0$ such that $\mathcal{E}(Q_{X},Q_{Y})>\alpha,\forall Q_{X}\in E_{\le\epsilon},Q_{Y}\in F_{\le\epsilon}$.
Then, the set pair $(E_{\le\epsilon},F_{\le\epsilon})$ constructed
here satisfy the constraints in the optimization at the right-hand
side of \eqref{eqn:1_G}. Hence, the right-hand side of \eqref{eqn:1_G}
is upper bounded by $\mu_{n}(E_{\le\epsilon}^{c})+\nu_{n}(F_{\le\epsilon}^{c}).$
By Sanov's theorem \cite[Theorem 6.2.10]{Dembo}, for fixed $r,\epsilon>0$,
we have 
\begin{align*}
\liminf_{n\to\infty}-\frac{1}{n}\log\mu_{n}(E_{\le\epsilon}^{c}) & \ge\inf_{Q_{X}\in E_{\epsilon}^{c}}D(Q_{X}\|P_{X})\ge r,\\
\liminf_{n\to\infty}-\frac{1}{n}\log\nu_{n}(F_{\le\epsilon}^{c}) & \ge\inf_{Q_{Y}\in F_{\epsilon}^{c}}D(Q_{Y}\|P_{Y})\ge r.
\end{align*}
Therefore, 
\begin{align*}
\liminf_{n\to\infty}-\frac{1}{n}\log(1-\mathcal{G}_{\alpha}^{(n)}(P_{X},P_{Y})) & \ge\liminf_{n\to\infty}-\frac{1}{n}\log[\mu_{n}(E_{\le\epsilon}^{c})+\nu_{n}(F_{\le\epsilon}^{c})]\ge r.
\end{align*}

We can take infimum over all feasible $r,\epsilon>0$, and then obtain
that $\liminf_{n\to\infty}-\frac{1}{n}\log(1-\mathcal{G}_{\alpha}^{(n)}(P_{X},P_{Y}))$
is lower bounded by 
\begin{align}
\sup_{\substack{r,\epsilon>0:\mathcal{E}(Q_{X},Q_{Y})>\alpha,\forall Q_{X}\in E_{\le\epsilon},Q_{Y}\in F_{\le\epsilon}}
}r & \ge\sup_{\substack{r,\epsilon>0:\mathcal{E}(Q_{X},Q_{Y})>\alpha,\forall Q_{X}\in E^{2\epsilon},Q_{Y}\in F^{2\epsilon}}
}r\\
 & =\sup_{\epsilon>0}\inf_{r>0:\mathcal{E}(Q_{X},Q_{Y})\leq\alpha,\exists Q_{X}\in E^{2\epsilon},Q_{Y}\in F^{2\epsilon}}r\label{eq:-4}
\end{align}
where $E^{2\epsilon}:=\bigcup_{Q_{X}\in E}B_{2\epsilon}(Q_{X})$ and
$F^{2\epsilon}:=\bigcup_{Q_{Y}\in F}B_{2\epsilon}(Q_{Y})$, and the
equality above follows by the monotonicity of the sublevel sets $E,F$
and the continuity of real numbers. Note that for each $Q_{X}\in E^{2\epsilon}$,
there is $Q_{X}'\in E$ such that $Q_{X}\in B_{2\epsilon}(Q_{X}')$,
and for each $Q_{Y}\in F^{2\epsilon}$, there is $Q_{Y}'\in F$ such
that $Q_{Y}\in B_{2\epsilon}(Q_{Y}')$. By the definition of $E,F$,
we have $r\ge\max\{D(Q_{X}'\|P_{X}),D(Q_{Y}'\|P_{Y})\}$. Hence, \eqref{eq:-4}
is further lower bounded by $f^{-}(\alpha)$ given in \eqref{eq:-100}.
(Note that the notations $Q_{X}',Q_{Y}'$ and $Q_{X},Q_{Y}$ are exchanged
in the definition of $f^{-}(\alpha)$.)

\subsubsection{Upper Bound}

In the following, we use a splitting technique to design a desired
coupling $\pi$ of $\mu_{n}$ and $\nu_{n}$. Let $(Q_{X},Q_{Y})$
be a pair of distributions such that 
\begin{equation}
B_{\epsilon}(Q_{X})\times B_{\epsilon}(Q_{Y})\subseteq\{(Q_{X},Q_{Y}):\mathcal{E}(Q_{X},Q_{Y})\leq\alpha\}\label{eq:-120}
\end{equation}
for sufficiently small $\epsilon>0$. If there is no such pair, then
$f^{+}(\alpha)=+\infty$, and hence, the upper bound $f^{+}(\alpha)$
in \eqref{eqn:-2} holds trivially. Denote 
\[
p:=\min\{\mu_{n}(B_{\epsilon}(Q_{X})),\nu_{n}(B_{\epsilon}(Q_{Y}))\}.
\]
By large deviations theory, it is not difficult to see that $p>0$
for sufficiently large $n$; this point will be confirmed later. Denote
$\mu_{n|B_{\epsilon}(Q_{X})}$ as the conditional distribution induced
by $\mu_{n}$ given the event $B_{\epsilon}(Q_{X})$. The conditional
distribution $\nu_{n|B_{\epsilon}(Q_{Y})}$ is defined similarly.
Define two new distributions 
\begin{align*}
 & \mu_{n}':=\frac{\mu_{n}-p\cdot\mu_{n|B_{\epsilon}(Q_{X})}}{1-p},\qquad\nu_{n}':=\frac{\nu_{n}-p\cdot\nu_{n|B_{\epsilon}(Q_{Y})}}{1-p}.
\end{align*}
Then $\mu_{n}$ and $\nu_{n}$ can be written as the following mixtures:
\begin{align*}
\mu_{n} & =(1-p)\mu_{n}'+p\cdot\mu_{n|B_{\epsilon}(Q_{X})},\qquad\nu_{n}=(1-p)\nu_{n}'+p\cdot\nu_{n|B_{\epsilon}(Q_{Y})}.
\end{align*}
This is the so-called \emph{splitting technique}, which was previously
used to study limit theorems of recurrent Markov processes \cite{nummelin1978uniform,athreya1978new},
used to construct a coupling of the original Markov chain and the
target Markov chain in the study of the mixing rate of Markov Chain
Monte Carlo (MCMC) \cite{roberts2004general}, and also used to prove
the noncompact version of the Kantorovich duality given in Theorem
\ref{thm:Kantorovich} \cite{villani2003topics}.

We now define a new mixture distribution 
\begin{align}
 & \pi:=(1-p)\cdot\mu_{n}'\otimes\nu_{n}'+p\cdot\mu_{n|B_{\epsilon}(Q_{X})}\otimes\nu_{n|B_{\epsilon}(Q_{Y})}.\label{eq:-121}
\end{align}
Obviously, $\pi\in\Pi(\mu_{n},\nu_{n})$. Moreover, by \eqref{eq:-120}
and \eqref{eq:-121}, we have 
\[
\pi\{(Q_{X},Q_{Y}):\mathcal{E}(Q_{X},Q_{Y})\leq\alpha\}\geq\pi(B_{\epsilon}(Q_{X})\times B_{\epsilon}(Q_{Y}))\geq p.
\]
Combining this with the nested formula in Theorem \ref{thm:nestedOT}
yields that 
\begin{align*}
\limsup_{n\to\infty}-\frac{1}{n}\log(1-\mathcal{G}_{\alpha}^{(n)}(P_{X},P_{Y})) & \leq\limsup_{n\to\infty}-\frac{1}{n}\log p\\
 & \leq\max\big\{\inf_{Q_{X}'\in B_{\epsilon}(Q_{X})}D(Q_{X}'\|P_{X}),\inf_{Q_{Y}'\in B_{\epsilon}(Q_{Y})}D(Q_{Y}'\|P_{Y})\big\}\\
 & \leq\max\{D(Q_{X}\|P_{X}),D(Q_{Y}\|P_{Y})\},
\end{align*}
where the second inequality follows by Sanov's theorem. Since $(Q_{X},Q_{Y})$
is an arbitrary pair of distributions satisfying \eqref{eq:-120}
and $\epsilon>0$ in \eqref{eq:-120} is also arbitrary, we have 
\begin{align*}
 & \limsup_{n\to\infty}-\frac{1}{n}\log(1-\mathcal{G}_{\alpha}^{(n)}(P_{X},P_{Y}))\le f^{+}(\alpha).
\end{align*}

\subsection{Case of $\alpha>\mathcal{E}(P_{X},P_{Y})$}

\subsubsection{Lower Bound}

We now prove the direction of ``$\ge$'' in \eqref{eq:LD2}, i.e.,
$g(\alpha)$ is a lower bound on the left side of \eqref{eq:LD2}.
Compared to the case of $\alpha<\mathcal{E}(P_{X},P_{Y})$, our proof
for the case $\alpha>\mathcal{E}(P_{X},P_{Y})$ is more complicated,
especially for the lower bound case. This is because for this case,
it seems difficult to construct an explicit coupling that asymptotically
attains the lower bound on the exponent. So, instead, we utilize Strassen's
dual formula given in \eqref{eq:nesteddual} to derive the lower bound.

We first provide a heuristic proof idea for the lower bound case.
For the supremum in \eqref{eq:nesteddual}, it does not change if
we restrict $\nu_{n}(B^{c})\le\mu_{n}(A)$. That is, 
\begin{equation}
\mathcal{G}_{\alpha}^{(n)}(P_{X},P_{Y})=\sup_{\substack{\textrm{closed }A,B:\nu_{n}(B^{c})\le\mu_{n}(A),\\
\mathcal{E}(Q_{X},Q_{Y})>\alpha,\forall Q_{X}\in A,Q_{Y}\in B
}
}\mu_{n}(A)-\nu_{n}(B^{c}),.\label{eq:-13-3-3}
\end{equation}
which yields the following simpler upper bound by omitting the negative
term $-\nu_{n}(B^{c})$. 
\begin{equation}
\mathcal{G}_{\alpha}^{(n)}(P_{X},P_{Y})\leq\sup_{\substack{\textrm{closed }A,B:\nu_{n}(B^{c})\le\mu_{n}(A),\\
\mathcal{E}(Q_{X},Q_{Y})>\alpha,\forall Q_{X}\in A,Q_{Y}\in B
}
}\mu_{n}(A).\label{eq:-73}
\end{equation}
By Sanov's theorem \cite[Theorem 6.2.10]{Dembo}, roughly speaking,
$-\frac{1}{n}\log\mu_{n}(A)=D(Q_{X}\|P_{X})+o_{n|A}(1)$ for some
$Q_{X}$ and $-\frac{1}{n}\log\nu_{n}(B^{c})=D(Q_{Y}\|P_{Y})+o_{n|B}(1)$
for some $Q_{Y}$. The latter means $B^{c}\subseteq\{Q_{Y}':D(Q_{Y}'\|P_{Y})\geq D(Q_{Y}\|P_{Y})+o_{n|B}(1)\}$.
Furthermore, to approach the exponent of the supremum in the right-hand
side of \eqref{eq:-73}, the sets $A$ and $B$ should be chosen as
small as possible under the conditions that $\nu_{n}(B^{c})\le\mu_{n}(A)$
and that the exponent of $\mu_{n}(A)$ remains unchanged. Hence we
should choose $A=\{Q_{X}\}$ and $B=\{Q_{Y}':D(Q_{Y}'\|P_{Y})\le D(Q_{Y}\|P_{Y})\}$.
Substituting these into \eqref{eq:-73}, we obtain 
\begin{align}
 & \liminf_{n\to\infty}-\frac{1}{n}\log\mathcal{G}_{\alpha}^{(n)}(P_{X},P_{Y})\nonumber \\
 & \geq\inf_{\substack{Q_{X},Q_{Y}:D(Q_{Y}\|P_{Y})\geq D(Q_{X}\|P_{X}),\\
\mathcal{E}(Q_{X},Q_{Y}')>\alpha\textrm{ for all }Q_{Y}'\textrm{ s.t. }D(Q_{Y}'\|P_{Y})\leq D(Q_{Y}\|P_{Y})
}
}D(Q_{X}\|P_{X})\label{eq:-73-2}\\
 & =\inf_{Q_{X}:\mathcal{E}(Q_{X},Q_{Y}')>\alpha\textrm{ for all }Q_{Y}'\textrm{ s.t. }D(Q_{Y}'\|P_{Y})\leq D(Q_{X}\|P_{X})}D(Q_{X}\|P_{X})\label{eqn:-113}\\
 & =g(\alpha),\label{eq:-113}
\end{align}
where \eqref{eq:-113} follows since, roughly speaking, given $Q_{X}$,
the optimal $Q_{Y}$ in \eqref{eq:-73-2} satisfies $D(Q_{Y}\|P_{Y})=D(Q_{X}\|P_{X})$.
We should note that there are two obstacles in this heuristic proof. 
\begin{enumerate}
\item \label{enu:Although-the-expression} Although we claim that the optimal
$B$ should be $\{Q_{Y}':D(Q_{Y}'\|P_{Y})\le D(Q_{Y}\|P_{Y})\}$ in
the proof idea above, this is not necessarily true since we only know
that $B\supseteq\{Q_{Y}':D(Q_{Y}'\|P_{Y})<D(Q_{Y}\|P_{Y})\}$. This
implies that to obtain a lower bound, we only can relax $B$ to $\{Q_{Y}':D(Q_{Y}'\|P_{Y})<D(Q_{Y}\|P_{Y})\}$.
This difference is subtle but crucial, since if we replace ``$\le$''
in \eqref{eqn:-113} with ``$<$'', then \eqref{eqn:-113} becomes
zero (by observing that $Q_{X}=P_{X}$ is feasible in the infimization
of \eqref{eqn:-113}). However, $g(\alpha)$ is bounded away from
zero. Hence, \eqref{eq:-113} is ``discontinuous'' in the feasible
region in the sense that whether excluding the point $Q_{X}=P_{X}$
from the feasible region in \eqref{eq:-113} will result in different
values. In the following, we provide a formal proof, in which we clear
away this obstacle by excluding $P_{X}$ 
from $A$. That is, we add the constraint $Q_{X}\neq P_{X}$ into
the infimum in \eqref{eqn:-113}, which makes the value of \eqref{eqn:-113}
do not change, and more precisely, remain to be equal to $g(\alpha)$
no matter whether we replace ``$\le$'' in \eqref{eqn:-113} with
``$<$''. 
\item \label{enu:Another-difficulty-is}Another obstacle is that in order
to show the inequality in \eqref{eq:-73-2}, we need to swap $\liminf_{n\to\infty}$
and the infimization operation in \eqref{eq:-73-2}. However, this
is not feasible in general. In the formal proof, we use a covering
technique (or compactness technique) to address this obstacle. 
\end{enumerate}
We next provide a formal proof for the lower bound $g(\alpha)$.

We denote $P_{XY}\in\Pi(P_{X},P_{Y})$ as a coupling such that $\mathbb{E}_{P_{XY}}[c(X,Y)]<\alpha$.
This is feasible since $\mathcal{E}(P_{X},P_{Y})<\alpha$. Denote
$P_{XY}^{\otimes n}$ as the $n$-fold product of $P_{XY}$. Then
by the definition of $\mathcal{G}_{\alpha}^{(n)}(P_{X},P_{Y})$ and
by weak law of large number, 
\begin{align*}
\mathcal{G}_{\alpha}^{(n)}(P_{X},P_{Y}) & \leq P_{XY}^{\otimes n}\Big\{(x^{n},y^{n}):\frac{1}{n}\sum_{i=1}^{n}c(x_{i},y_{i})>\alpha\Big\}\to0\mbox{ as }n\to\infty.
\end{align*}
Hence in Strassen's dual formula in \eqref{eq:nesteddual}, $\mu_{n}(A)+\nu_{n}(B)-1$
converges to zero. That is, given any $\delta>0$ and for sufficiently
large $n$, it suffices to restrict $A,B$ in the constraints in \eqref{eq:nesteddual}
to satisfy that $\mu_{n}(A)\le\frac{1}{2}+\delta$ or $\nu_{n}(B)\le\frac{1}{2}+\delta$.
Therefore, for any $\delta\in(0,\frac{1}{2})$, we have 
\begin{align}
\mathcal{G}_{\alpha}^{(n)}(P_{X},P_{Y})=\max\{\Upsilon_{1},\Upsilon_{2}\},\label{eqn:maxUp}
\end{align}
where 
\begin{align}
\Upsilon_{1} & :=\sup_{\substack{\textrm{closed }A,B:\mu_{n}(A)\le\frac{1}{2}+\delta,\\
\mathcal{E}(Q_{X},Q_{Y})>\alpha,\forall Q_{X}\in A,Q_{Y}\in B
}
}\mu_{n}(A)-\nu_{n}(B^{c}),\label{eqn:-Up_1}\\
\Upsilon_{2} & :=\sup_{\substack{\textrm{closed }A,B:\nu_{n}(B)\le\frac{1}{2}+\delta,\\
\mathcal{E}(Q_{X},Q_{Y})>\alpha,\forall Q_{X}\in A,Q_{Y}\in B
}
}\nu_{n}(B)-\mu_{n}(A^{c}).\label{eqn:-Up_2}
\end{align}

First consider the term \eqref{eqn:-Up_1}. For the optimization problem
in \eqref{eqn:-Up_1}, it suffices to consider $A,B$ such that 
\begin{equation}
\nu_{n}(B^{c})\le\mu_{n}(A)\le\frac{1}{2}+\delta.\label{eq:-3}
\end{equation}
Now we exclude a neighborhood of $P_{X}$ from $A$. We show that
the condition ``$\mu_{n}(A)\le\frac{1}{2}+\delta$'' in the constraints
under the supremum in \eqref{eqn:-Up_1} can be replaced by ``$A\subseteq B_{\epsilon}(P_{X})^{c}$''
for sufficiently small $\epsilon$. 
\begin{lem}
\label{lem:For-the-optimization} Assume UCOTF. Then for $\delta\in(0,\frac{1}{2})$
and for $\epsilon>0$ such that 
\begin{equation}
\epsilon+\sup_{Q_{X}\in B_{\epsilon}(P_{X})}\mathcal{E}(Q_{X},P_{Y})<\alpha,\label{eq:-11}
\end{equation}
there exists an $N_{\delta,\epsilon}\in\mathbb{N}$ such that for
all $n\ge N_{\delta,\epsilon}$, 
\begin{equation}
\sup_{\substack{\textrm{closed }A,B:\nu_{n}(B^{c})\le\mu_{n}(A)\le\frac{1}{2}+\delta,\\
\mathcal{E}(Q_{X},Q_{Y})>\alpha,\forall Q_{X}\in A,Q_{Y}\in B
}
}\mu_{n}(A)-\nu_{n}(B^{c})\leq\sup_{\substack{\textrm{closed }A,B:A\subseteq B_{\epsilon}(P_{X})^{c},\\
\mathcal{E}(Q_{X},Q_{Y})>\alpha,\forall Q_{X}\in A,Q_{Y}\in B
}
}\mu_{n}(A)-\nu_{n}(B^{c}).\label{eq:-94}
\end{equation}
\end{lem}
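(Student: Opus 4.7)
My plan is to argue by contradiction: I will show that any admissible pair $(A,B)$ in the supremum on the LHS of \eqref{eq:-94} must automatically satisfy $A \subseteq B_{\epsilon}(P_X)^c$ once $n$ is large enough, so the two suprema agree on their feasible sets. The mechanism is that the marginal constraint $\nu(B^c)\le \mu(A)\le \tfrac12+\delta$ prohibits $B^c$ from being too large, while intersecting $A$ with $B_{\epsilon}(P_X)$ would force $B^c$ to engulf a whole neighborhood of $P_Y$ by UCOTF and the gap condition \eqref{eq:-11}.

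More concretely, suppose $(A,B)$ is closed, satisfies $\nu(B^c)\le\mu(A)\le\tfrac12+\delta$ and the cost condition $\mathcal{E}(Q_X,Q_Y)>\alpha$ for all $Q_X\in A$, $Q_Y\in B$, yet $A\cap B_{\epsilon}(P_X)\ne\emptyset$. Pick any $Q_X^{\ast}\in A\cap B_{\epsilon}(P_X)$. By UCOTF (Assumption \ref{ass:Uniform-Continuity-of}), there exists $\eta=\eta(\epsilon)>0$, independent of $Q_X^{\ast}$, such that
\[
\sup_{\mathtt{L}_2(Q_Y,P_Y)<\eta}\bigl|\mathcal{E}(Q_X^{\ast},Q_Y)-\mathcal{E}(Q_X^{\ast},P_Y)\bigr|<\tfrac{\epsilon}{2}.
\]
Combined with \eqref{eq:-11}, which gives $\mathcal{E}(Q_X^{\ast},P_Y)<\alpha-\epsilon$, this yields $\mathcal{E}(Q_X^{\ast},Q_Y)<\alpha-\tfrac{\epsilon}{2}<\alpha$ for every $Q_Y\in B_{\eta}(P_Y)$. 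The cost condition then forces $B\cap B_{\eta}(P_Y)=\emptyset$, i.e.\ $B_{\eta}(P_Y)\subseteq B^c$, so $\nu(B^c)\ge \nu\bigl(B_{\eta}(P_Y)\bigr)$.

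Now, since the empirical measure $T_{Y^n}$ of an i.i.d.\ sample from $P_Y$ converges weakly to $P_Y$ in probability (equivalently, by Sanov's theorem applied to the open set $B_{\eta}(P_Y)$), we have $\nu\bigl(B_{\eta}(P_Y)\bigr)\to 1$ as $n\to\infty$. Choose $N_{\delta,\epsilon}$ so that $\nu\bigl(B_{\eta}(P_Y)\bigr)>\tfrac12+\delta$ for all $n\ge N_{\delta,\epsilon}$; this is possible because $\eta$ depends only on $\epsilon$ (and $\alpha$, $P_Y$), not on $(A,B)$ or $Q_X^{\ast}$. Then $\nu(B^c)>\tfrac12+\delta$, contradicting the hypothesis $\nu(B^c)\le\mu(A)\le\tfrac12+\delta$. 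Hence $A\cap B_{\epsilon}(P_X)=\emptyset$, and the feasible pair $(A,B)$ already belongs to the feasible region of the RHS supremum, which proves \eqref{eq:-94}.

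The only potentially delicate point is ensuring that $\eta$ (the radius of the forbidden neighborhood of $P_Y$) can be chosen uniformly in $Q_X^{\ast}\in B_{\epsilon}(P_X)$; this is precisely where \emph{uniform} continuity of $\mathcal{E}$ in Assumption \ref{ass:Uniform-Continuity-of} is essential, as pointwise continuity would not let me produce a single $N_{\delta,\epsilon}$ that works against all admissible pairs simultaneously. Everything else is a direct combination of \eqref{eq:-11}, UCOTF, and the weak law of large numbers.
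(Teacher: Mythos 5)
Your proof is correct and takes essentially the same approach as the paper: both proofs use UCOTF (together with \eqref{eq:-11}) to show that any point of $A$ lying in $B_{\epsilon}(P_X)$ would force $B^c$ to contain a fixed neighborhood $B_{\eta}(P_Y)$, whose $\nu$-measure tends to $1$ by Sanov's theorem, contradicting $\nu(B^c)\le\tfrac12+\delta$. The only cosmetic difference is that the paper routes through the intermediate set $A_\epsilon=\{Q_X:\mathcal{E}(Q_X,P_Y)\ge\alpha-\epsilon\}$, first proving $A\subseteq A_\epsilon$ and then $A_\epsilon\subseteq B_{\epsilon}(P_X)^c$, whereas you collapse these two steps into a single direct contradiction.
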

\begin{proof}[Proof of Lemma \ref{lem:For-the-optimization}] Define
$A_{\epsilon}:=\{Q_{X}:\mathcal{E}(Q_{X},P_{Y})\geq\alpha-\epsilon\}.$
Denote $(A_{n},B_{n})$ as any pair of closed sets satisfying the
constraints in the left side of \eqref{eq:-94}, i.e., they satisfy
that 
\begin{align}
\nu_{n}(B_{n}^{c}) & \le\mu_{n}(A_{n})\le\frac{1}{2}+\delta,\label{eq:-32}\\
\mathcal{E}(Q_{X},Q_{Y}) & >\alpha,\;\forall Q_{X}\in A_{n},Q_{Y}\in B_{n}.\label{eq:-28}
\end{align}
In order to show $A_{n}\subseteq B_{\epsilon}(P_{X})^{c}$ for sufficiently
large $n$, we first prove that for any $\epsilon>0$, $A_{n}\subseteq A_{\epsilon}$
holds for sufficiently large $n$, and then prove that $A_{\epsilon}\subseteq B_{\epsilon}(P_{X})^{c}$
holds for all $\epsilon>0$ satisfying \eqref{eq:-11}.

We now prove $A_{n}\subseteq A_{\epsilon}$ by contradiction. Suppose
that for infinitely many $n$, there is $Q_{X}^{(n)}\in A_{n}$ such
that 
\begin{equation}
\mathcal{E}(Q_{X}^{(n)},P_{Y})<\alpha-\epsilon.\label{eq:-27}
\end{equation}
By the assumption of UCOTF, for $\epsilon'>0$, 
\begin{equation}
\sup_{Q_{Y}\in B_{\epsilon'}(P_{Y})}\mathcal{E}(Q_{X}^{(n)},Q_{Y})\leq\mathcal{E}(Q_{X}^{(n)},P_{Y})+o_{\epsilon'}(1).\label{eq:-26}
\end{equation}
Let $\epsilon'$ be small enough such that $o_{\epsilon'}(1)<\epsilon$.
Then combining \eqref{eq:-27} and \eqref{eq:-26}, we have 
\begin{equation}
\sup_{Q_{Y}\in B_{\epsilon'}(P_{Y})}\mathcal{E}(Q_{X}^{(n)},Q_{Y})\leq\alpha.\label{eq:-30}
\end{equation}
Combining \eqref{eq:-28} and \eqref{eq:-30} yields that $B_{\epsilon'}(P_{Y})\subseteq B_{n}^{c}$.
On the other hand, by Sanov's theorem \cite[Theorem 6.2.10]{Dembo},
for fixed $\epsilon'>0$, $\nu_{n}(D_{\le\epsilon'^{2}/8}(P_{Y}))\to1$
as $n\to\infty$, which, combined with \eqref{eq:D-B}, implies that
$\nu_{n}(B_{\epsilon'}(P_{Y}))\to1$ as $n\to\infty$. This contradicts
with the condition $\nu_{n}(B_{n}^{c})\le\frac{1}{2}+\delta$ (see
\eqref{eq:-32}). Hence for sufficiently large $n$, $A_{n}\subseteq A_{\epsilon}.$

By the condition in \eqref{eq:-11} and the definition of $A_{\epsilon}$,
we have $B_{\epsilon}(P_{X})\subseteq A_{\epsilon}^{c}$, i.e., $A_{\epsilon}\subseteq B_{\epsilon}(P_{X})^{c}.$
This completes the proof of Lemma \ref{lem:For-the-optimization}.
\end{proof} By the assumption of UCOTF, $\epsilon+\sup_{Q_{X}\in B_{\epsilon}(P_{X})}\mathcal{E}(Q_{X},P_{Y})\to\mathcal{E}(P_{X},P_{Y})$
as $\epsilon\downarrow0$. On the other hand, $\mathcal{E}(P_{X},P_{Y})<\alpha$.
Hence \eqref{eq:-11} holds for all sufficiently small $\epsilon>0$.
Hence, given a sufficiently small $\epsilon>0$, for all sufficiently
large $n$, it suffices to consider $A$ such that $A\subseteq B_{\epsilon}(P_{X})^{c}$
in \eqref{eqn:-Up_1}.

In fact, Obstacle \ref{enu:Although-the-expression} has been addressed
now since we have already added the condition $A\subseteq B_{\epsilon}(P_{X})^{c}$
into the constraints. Such a condition will exclude the distribution
$P_{X}$ from the feasible region in the final expression $g(\alpha)$.
We will discuss this near the end of this proof. We next address Obstacle
\ref{enu:Another-difficulty-is} by using a covering technique.

Denote $F_{1}:=D_{\le s}(P_{X})$ and $F_{2}:=D_{\le s}(P_{Y})$.
Then by \cite[Theorem 20]{Erven}, $F_{1}$ and $F_{2}$ are compact.
By compactness, for any $\delta>0$, there exists a cover $\{B_{\delta}(Q_{X,i})\}_{i=1}^{k_{1}}$
(consisting of $k_{1}$ equal balls) for $F_{1}$. 
Similarly, there also exists another cover $\{B_{\delta}(Q_{Y,i})\}_{i=1}^{k_{2}}$
with a finite size $k_{2}$ for $F_{2}$. Define $G_{1}:=\bigcup_{i=1}^{k_{1}}B_{\le\delta}(Q_{X,i})$
and $G_{2}:=\bigcup_{i=1}^{k_{2}}B_{\le\delta}(Q_{Y,i})$. Obviously,
$G_{1}$ and $G_{2}$ are closed.

Now we continue \eqref{eq:-94}: The right-hand side of \eqref{eq:-94}
is further upper bounded by 
\begin{align}
 & \sup_{\substack{\textrm{closed }A,B:A\subseteq B_{\epsilon}(P_{X})^{c},\\
\mathcal{E}(Q_{X},Q_{Y})>\alpha,\forall Q_{X}\in A,Q_{Y}\in B
}
}\mu_{n}(A\cap{G_{1}})+\nu_{n}(B\cap{G_{2}})-1+\mu_{n}(G_{1}^{c})+\nu_{n}(G_{2}^{c})\nonumber \\
 & =\mu_{n}(G_{1}^{c})+\nu_{n}(G_{2}^{c})+\sup_{\substack{\textrm{closed }A\subseteq{G_{1}},B\subseteq{G_{2}}:A\subseteq B_{\epsilon}(P_{X})^{c},\\
\mathcal{E}(Q_{X},Q_{Y})>\alpha,\forall Q_{X}\in A,Q_{Y}\in B
}
}\mu_{n}(A)+\nu_{n}(B)-1.\label{eq:-95}
\end{align}

By Sanov's theorem \cite[Theorem 6.2.10]{Dembo}, for $i=1,2$, $\liminf_{n\to\infty}-\frac{1}{n}\log\mu_{n}(G_{i}^{c})\ge\inf_{Q_{X}\in G_{i}^{c}}D(Q_{X}\|P_{X})\ge s.$
Hence if we choose $s>g(\alpha)$, then the exponent of $\mu_{n}(G_{1}^{c})+\nu_{n}(G_{2}^{c})$
would be larger than $g(\alpha)$. Hence, for this case, to show the
lower bound $g(\alpha)$ on the exponent of the optimal ECP, we only
need to prove the exponent of the supremum term in \eqref{eq:-95}
is also larger than or equal to $g(\alpha)$.

Let $A\subseteq{G_{1}},B\subseteq{G_{2}}$ be two sets satisfying
the constraints under the supremum in \eqref{eq:-95}, i.e., they
are closed and satisfy that $A\subseteq B_{\epsilon}(P_{X})^{c}$
and $\mathcal{E}(Q_{X},Q_{Y})>\alpha,\forall Q_{X}\in A,Q_{Y}\in B$.
We denote $L_{1}:=\{i\in[k_{1}]:{B}_{\le\delta}(Q_{X,i})\cap A\neq\emptyset\}$
and $L_{2}:=\{i\in[k_{2}]:{B}_{\le\delta}(Q_{Y,i})\cap B\neq\emptyset\}$.
By definition, obviously the following property holds.

\begin{property} \label{property:-For-every}For every $i\in L_{1},j\in L_{2}$,
there exist $Q_{X}\in{B}_{\le\delta}(Q_{X,i}),Q_{Y}\in{B}_{\le\delta}(Q_{Y,j})$
such that $Q_{X}\in B_{\epsilon}(P_{X})^{c},\mathcal{E}(Q_{X},Q_{Y})>\alpha$.
\end{property}

Now we set $\delta=\epsilon/4$. By the triangle inequality and the
assumption of UCOTF, Property \ref{property:-For-every} implies Property
\ref{property:-For-every2}.

\begin{property} \label{property:-For-every2}For every $i\in L_{1},j\in L_{2}$,
all $Q_{X}\in{B}_{\le\epsilon/4}(Q_{X,i}),Q_{Y}\in{B}_{\le\epsilon/4}(Q_{Y,j})$
satisfy $Q_{X}\in B_{\epsilon/2}(P_{X})^{c},\mathcal{E}(Q_{X},Q_{Y})>\alpha-\kappa(\epsilon)$,
where $\kappa:(0,+\infty)\to(0,+\infty)$ is some positive and increasing
function such that $\lim_{\epsilon\downarrow0}\kappa(\epsilon)=0$.
\end{property} 

We now upper bound the supremum term in \eqref{eq:-95} as follows.
First, observe that the objective function $\mu_{n}(A)+\nu_{n}(B)-1$
is upper bounded by $\mu_{n}\Big(\bigcup_{i\in L_{1}}{B}_{\le\epsilon/4}(Q_{X,i})\Big)+\nu_{n}\Big(\bigcup_{i\in L_{2}}{B}_{\le\epsilon/4}(Q_{Y,i})\Big)-1$.
By Sanov's theorem \cite[Theorem 6.2.10]{Dembo}, it is further upper
bounded by $e^{-n(E_{1}+o_{n|\epsilon}(1))}-e^{-n(E_{2}+o_{n|\epsilon}(1))}$,
where 
\[
E_{1}:=\inf_{Q_{X}\in\bigcup_{i\in L_{1}}{B}_{\le\epsilon/4}(Q_{X,i})}D(Q_{X}\|P_{X}),\qquad E_{2}:=\inf_{Q_{Y}\in(\bigcup_{i\in L_{2}}{B}_{\le\epsilon/4}(Q_{Y,i}))^{c}}D(Q_{Y}\|P_{Y})
\]
Rigorously speaking, the terms $o_{n|\epsilon}(1)$ in the exponents
above depend on the union sets, or equivalently, depend on the sets
$L_{1},L_{2}$. However, such dependence can be removed, since $k_{1}$
and $k_{2}$ are finite and fixed. That is, given $\epsilon$, the
terms $o_{n|\epsilon}(1)$ in the exponents above can be made to converge
to zero uniformly for all $L_{1}\subseteq[k_{1}],L_{2}\subseteq[k_{2}]$,
and hence $o_{n|\epsilon}(1)$ can be assumed to be independent of
$L_{1},L_{2}$. Combining all above, the supremum term in \eqref{eq:-95}
can be upper bounded by 
\begin{align}
 & e^{-no_{n|\epsilon}(1)}\sup_{L_{1}\subseteq[k_{1}],L_{2}\subseteq[k_{2}]:\textrm{ Property }\ref{property:-For-every2}}e^{-n(E_{1}+o_{n|\epsilon}(1))}-e^{-nE_{2}}.\label{eq:-13}
\end{align}
If we relax the union sets $\bigcup_{i\in L_{1}}{B}_{\le\epsilon/4}(Q_{X,i}),\bigcup_{i\in L_{2}}{B}_{\le\epsilon/4}(Q_{Y,i})$
to any closed sets $A,B$ such that all $Q_{X}\in A,Q_{Y}\in B$ satisfy
$Q_{X}\in B_{\epsilon/2}(P_{X})^{c},\mathcal{E}(Q_{X},Q_{Y})>\alpha-\kappa(\epsilon)$,
then the supremum term in \eqref{eq:-13} is further upper bounded
by 
\begin{align}
\sup_{\substack{\textrm{closed }A,B:Q_{X}\in B_{\epsilon/2}(P_{X})^{c},\\
\mathcal{E}(Q_{X},Q_{Y})>\alpha-\kappa(\epsilon),\forall Q_{X}\in A,Q_{Y}\in B
}
}e^{-n(\inf_{Q_{X}\in A}D(Q_{X}\|P_{X})+o_{n|\epsilon}(1))}-e^{-n\inf_{Q_{Y}\in B^{c}}D(Q_{Y}\|P_{Y})}.\label{eq:-96}
\end{align}
Note that the term $o_{n|\epsilon}(1)$ is independent of $A,B$.
Until now, Obstacle \ref{enu:Another-difficulty-is} has been addressed.

Furthermore, \eqref{eq:-96} can be rewritten as 
\begin{align}
\sup_{\substack{\textrm{closed }A,B:Q_{X}\in B_{\epsilon/2}(P_{X})^{c},\\
\mathcal{E}(Q_{X},Q_{Y})>\alpha-\kappa(\epsilon),\forall Q_{X}\in A,Q_{Y}\in B
}
}\sup_{Q_{X}\in A} & e^{-n(D(Q_{X}\|P_{X})+o_{n|\epsilon}(1))}-e^{-n\inf_{Q_{Y}\in B^{c}}D(Q_{Y}\|P_{Y})}.\label{eq:-115}
\end{align}
Obviously, to approach the supremum in \eqref{eq:-115}, the set $A$
should be as small as possible. Hence without loss of optimality,
we can restrict that $A=\{Q_{X}\}$. That is, \eqref{eq:-115} can
be further rewritten as 
\begin{align}
 & \sup_{\substack{Q_{X},\textrm{closed }B:Q_{X}\in B_{\epsilon/2}(P_{X})^{c},\\
\mathcal{E}(Q_{X},Q_{Y})>\alpha-\kappa(\epsilon),\forall Q_{Y}\in B
}
}e^{-n(D(Q_{X}\|P_{X})+o_{n|\epsilon}(1))}-e^{-n\inf_{Q_{Y}\in B^{c}}D(Q_{Y}\|P_{Y})}.\label{eq:-33}
\end{align}

For set $B$, define $r:=\inf_{Q_{Y}\in B^{c}}D(Q_{Y}\|P_{Y})$. Then
$B^{c}\subseteq\{Q_{Y}:D(Q_{Y}\|P_{Y})\geq r\}$, i.e., 
\begin{equation}
B\supseteq\{Q_{Y}:D(Q_{Y}\|P_{Y})<r\}.\label{eq:-18}
\end{equation}
Given any sufficiently small $\epsilon>0$ and any $\epsilon'>0$,
for all sufficiently large $n$, 
\eqref{eq:-33} is upper bounded by 
\begin{align}
\sup_{\substack{r,Q_{X}:Q_{X}\in B_{\epsilon/2}(P_{X})^{c},\\
\mathcal{E}(Q_{X},Q_{Y})>\alpha-\kappa(\epsilon)\textrm{ for all }Q_{Y}\textrm{ s.t. }D(Q_{Y}\|P_{Y})<r
}
}e^{-n(D(Q_{X}\|P_{X})-\epsilon')}-e^{-nr},\label{eq:-17}
\end{align}
which follows since $o_{n|\epsilon}(1)\ge-\epsilon'$ for sufficiently
large $n$ given $\epsilon$, and moreover, by \eqref{eq:-18}, the
closed set $B$ is relaxed to $\{Q_{Y}:D(Q_{Y}\|P_{Y})<r\}$. 



By the equivalence of a statement and its contrapositive, we have
\begin{align}
 & \mathcal{E}(Q_{X},Q_{Y})>\alpha-\kappa(\epsilon)\textrm{ for all }Q_{Y}\textrm{ s.t. }D(Q_{Y}\|P_{Y})<r\\
\Longleftrightarrow\qquad & D(Q_{Y}\|P_{Y})\geq r\textrm{ for all }Q_{Y}\textrm{ s.t. }\mathcal{E}(Q_{X},Q_{Y})\leq\alpha-\kappa(\epsilon).\label{eqn:equicont}
\end{align}
Hence, \eqref{eq:-17} is further upper bounded by 
\begin{align}
\sup_{\substack{r,Q_{X}:Q_{X}\in B_{\epsilon/2}(P_{X})^{c},\\
\phi_{Q_{X}}(\alpha-\kappa(\epsilon))\geq r
}
}e^{-n(D(Q_{X}\|P_{X})-\epsilon')}-e^{-nr} & =\sup_{Q_{X}\in B_{\epsilon/2}(P_{X})^{c}}e^{-n(D(Q_{X}\|P_{X})-\epsilon')}-e^{-n\varphi_{Q_{X}}(\alpha-\kappa(\epsilon))},\label{eq:-54-1}
\end{align}
where $\phi_{Q_{X}}(t):=\inf_{Q_{Y}:\mathcal{E}(Q_{X},Q_{Y})\leq t}D(Q_{Y}\|P_{Y})$
for $t\ge0$. Since \eqref{eq:-54-1} is an upper bound on a nonnegative
quantity, it is nonnegative as well. Hence, without loss of optimality,
one can add the condition $\phi_{Q_{X}}(\alpha-\kappa(\epsilon))\geq D(Q_{X}\|P_{X})-\epsilon'$
into the constraints under the supremization in the right side of
\eqref{eq:-54-1}. Moreover, the second term (i.e., the negative one)
can be removed, in order to obtain a further upper bound.

Combining all points above (from \eqref{eq:-3} to the current point)
and taking $\liminf_{n\to\infty}-\frac{1}{n}\log$, we have that for
all sufficiently small $\epsilon,\epsilon'>0$, 
\begin{align}
\underline{\mathrm{E}}_{X}(\alpha) & :=\liminf_{n\to\infty}-\frac{1}{n}\log\Upsilon_{1}\geq\inf_{Q_{X}\in\mathcal{Q}_{\epsilon,\epsilon'}}D(Q_{X}\|P_{X})-\epsilon',\label{eq:-47-1}
\end{align}
where $\Upsilon_{1}$ is defined in \eqref{eqn:-Up_1}, and 
\begin{align}
\mathcal{Q}_{\epsilon,\epsilon'} & :=\{Q_{X}\in B_{\epsilon/2}(P_{X})^{c}:\,\phi_{Q_{X}}(\alpha-\kappa(\epsilon))\geq D(Q_{X}\|P_{X})-\epsilon'\}.\label{eq:-22}
\end{align}
From \eqref{eq:-47-1}, we obtain that 
\begin{align}
\underline{\mathrm{E}}_{X}(\alpha) & \geq\lim_{\epsilon\downarrow0}\lim_{\epsilon'\downarrow0}\inf_{Q_{X}\in\mathcal{Q}_{\epsilon,\epsilon'}}D(Q_{X}\|P_{X}).\label{eq:-37}
\end{align}
We next remove the limits in the lower bound above. 

Define $\phi_{Q_{X}}^{-}(t):=\lim_{s\uparrow t}\phi_{Q_{X}}(s)$.
Obviously, $\phi_{Q_{X}}^{-}(t)\ge\varphi_{Q_{X}}(t)$, and $\phi_{Q_{X}}^{-}$
is nonincreasing (since $\phi_{Q_{X}}$ is nonincreasing) and left-continuous.
Hence, 
\begin{align}
\mathcal{Q}_{\epsilon,\epsilon'} & \subseteq\mathcal{Q}_{\epsilon,\epsilon'}^{-}:=\{Q_{X}\in B_{\epsilon/2}(P_{X})^{c}:\,\phi_{Q_{X}}^{-}(\alpha-\kappa(\epsilon))\geq D(Q_{X}\|P_{X})-\epsilon'\}.\label{eq:-21-1}
\end{align}

We now claim that given $\epsilon,\epsilon'$, $\mathcal{Q}_{\epsilon,\epsilon'}^{-}$
is closed. To show this, it suffices to show that given $t>0$, $Q_{X}\mapsto\varphi_{Q_{X}}^{-}(t)$
is upper semi-continuous (under the weak topology). This follows since,
on one hand, for any sequence $\{Q_{X}^{(k)}\}$ such that $Q_{X}^{(k)}\rightarrow Q_{X}$
as $k\to\infty$, by the assumption of UCOTF, we have 
\begin{equation}
\limsup_{k\to\infty}\phi_{Q_{X}^{(k)}}^{-}(t)\le\lim_{s\uparrow t}\phi_{Q_{X}}^{-}(s)=\phi_{Q_{X}}^{-}(t).\label{eq:-23}
\end{equation}
Hence, $\mathcal{Q}_{\epsilon}^{-}$ is closed.

Note that $\mathcal{Q}_{\epsilon,\epsilon'}^{-}$ is non-increasing
in $\epsilon'$, and hence, the operation $\lim_{\epsilon'\downarrow0}$
in \eqref{eq:-37} can be replaced by $\sup_{\epsilon'>0}$. Applying
Lemma \ref{lem:continuityofinf}, we obtain that 
\begin{equation}
\sup_{\epsilon'>0}\inf_{Q_{X}\in\mathcal{Q}_{\epsilon,\epsilon'}^{-}}D(Q_{X}\|P_{X})=\inf_{Q_{X}\in\mathcal{Q}_{\epsilon}^{-}}D(Q_{X}\|P_{X}).\label{eq:-53-1}
\end{equation}
where $\mathcal{Q}_{\epsilon}^{-}:=\bigcap_{\epsilon'>0}\mathcal{Q}_{\epsilon,\epsilon'}^{-}$.
It is easily seen that 
\begin{align*}
\mathcal{Q}_{\epsilon}^{-} & =\{Q_{X}\in B_{\epsilon/2}(P_{X})^{c}:\,\phi_{Q_{X}}^{-}(\alpha-\kappa(\epsilon))\geq D(Q_{X}\|P_{X})\}\;\subseteq\;\tilde{\mathcal{Q}}_{\epsilon}^{-}\backslash\{P_{X}\},
\end{align*}
where $\tilde{\mathcal{Q}}_{\epsilon}^{-}:=\{Q_{X}:\,\phi_{Q_{X}}^{-}(\alpha-\kappa(\epsilon))\geq D(Q_{X}\|P_{X})\}$
is closed and non-decreasing in $\epsilon>0$. By Lemma \ref{lem:continuityofinf}
and \eqref{eq:-37}, 
\begin{align}
\underline{\mathrm{E}}_{X}(\alpha) & \geq\sup_{\epsilon>0}\inf_{Q_{X}\in\tilde{\mathcal{Q}}_{\epsilon}^{-}\backslash\{P_{X}\}}D(Q_{X}\|P_{X})=\inf_{Q_{X}\in\mathcal{Q}^{-}}D(Q_{X}\|P_{X}),\label{eq:-37-2}
\end{align}
where $\mathcal{Q}^{-}:=\bigcap_{\epsilon>0}\overline{\tilde{\mathcal{Q}}_{\epsilon}^{-}\backslash\{P_{X}\}}$.

Let $\epsilon$ be small enough such that $\mathcal{E}(P_{X},P_{Y})<\alpha-\kappa(\epsilon)$.
We now claim that for such $\epsilon$, $\tilde{\mathcal{Q}}_{\epsilon}^{-}\backslash\{P_{X}\}$
is closed. We next prove this claim. First observe that for any sequence
$\{Q_{X}^{(k)}\}\subseteq\tilde{\mathcal{Q}}_{\epsilon}^{-}\backslash\{P_{X}\}$
such that $Q_{X}^{(k)}\to Q_{X}$ as $k\to\infty$, we have $Q_{X}\in\tilde{\mathcal{Q}}_{\epsilon}^{-}$
since $\tilde{\mathcal{Q}}_{\epsilon}^{-}$ is closed. Hence, it suffices
to prove $Q_{X}\neq P_{X}$, which will be proven by contradiction
in the following. Suppose $Q_{X}=P_{X}$. Then, by the assumption
of UCOTF, $\mathcal{E}(Q_{X}^{(k)},P_{Y})\to\mathcal{E}(P_{X},P_{Y})<\alpha-\kappa(\epsilon)$.
Hence, for all sufficiently large $k$, it always holds that $\phi_{Q_{X}^{(k)}}^{-}(\alpha-\kappa(\epsilon))=0$,
i.e., for this case, $Q_{Y}=P_{Y}$ is a feasible (and also optimal)
solution to the infimization in the definition of $\phi_{Q_{X}^{(k)}}^{-}(\alpha-\kappa(\epsilon))$.
However, since $Q_{X}^{(k)}\neq P_{X}$, we have $D(Q_{X}^{(k)}\|P_{X})>0$
for all $k$. Hence, for sufficiently large $k$, $Q_{X}^{(k)}\notin\tilde{\mathcal{Q}}_{\epsilon}^{-}$.
This contradict with the choice of the sequence $\{Q_{X}^{(k)}\}$.
Hence, $Q_{X}\neq P_{X}$, which in turn implies that $Q_{X}\in\tilde{\mathcal{Q}}_{\epsilon}^{-}\backslash\{P_{X}\}$.
Since the convergent sequence $\{Q_{X}^{(k)}\}$ is arbitrarily chosen,
we have that $\tilde{\mathcal{Q}}_{\epsilon}^{-}\backslash\{P_{X}\}$
is closed, completing the proof of the claim.

In fact, the set $\tilde{\mathcal{Q}}_{\epsilon}^{-}$ consists of
two disjoint closed subsets $\tilde{\mathcal{Q}}_{\epsilon}^{-}\backslash\{P_{X}\}$
and $\{P_{X}\}$. The subset $\tilde{\mathcal{Q}}_{\epsilon}^{-}\backslash\{P_{X}\}$
is the ``reasonable'' feasible region for the infimization in \eqref{eq:-37-2};
see Obstacle \ref{enu:Although-the-expression}. Here we address Obstacle
\ref{enu:Although-the-expression} by excluding $P_{X}$ from $\tilde{\mathcal{Q}}_{\epsilon}^{-}$.
In the following, we show that by doing this, the resultant lower
bound turns into $g(\alpha)$.

By the claim above, we can write $\mathcal{Q}^{-}=\bigcap_{\epsilon>0}\big(\tilde{\mathcal{Q}}_{\epsilon}^{-}\backslash\{P_{X}\}\big)$.
It is easily seen that 
\begin{align*}
\tilde{\mathcal{Q}}_{\epsilon}^{-} & \subseteq\{Q_{X}:\,\phi_{Q_{X}}(\alpha-2\kappa(\epsilon))\geq D(Q_{X}\|P_{X})\},
\end{align*}
which follows since $\phi_{Q_{X}}^{-}(t)\le\varphi_{Q_{X}}(t-\delta)$
for any $\delta>0$, and here we set $\delta=\kappa(\epsilon)$.

By an equivalence similar to \eqref{eqn:equicont}, we have 
\begin{align}
\tilde{\mathcal{Q}}_{\epsilon}^{-} & \subseteq\{Q_{X}:\inf_{Q_{Y}:D(Q_{Y}\|P_{Y})<D(Q_{X}\|P_{X})}\mathcal{E}(Q_{X},Q_{Y})\geq\alpha-2\kappa(\epsilon)\}.\label{eq:-6}
\end{align}
Define $\psi_{Q_{X}}(t):=\inf_{Q_{Y}:D(Q_{Y}\|P_{Y})\leq t}\mathcal{E}(Q_{X},Q_{Y})$
for $t\ge0$. Define $\psi_{Q_{X}}^{-}(t):=\lim_{s\uparrow t}\psi_{Q_{X}}(s)$
for $t>0$. Then, $\tilde{\mathcal{Q}}_{\epsilon}^{-}\backslash\{P_{X}\}\subseteq\{Q_{X}:\psi_{Q_{X}}^{-}(D(Q_{X}\|P_{X}))\geq\alpha-2\kappa(\epsilon)\},$
which implies that $\mathcal{Q}^{-}\subseteq\{Q_{X}:\psi_{Q_{X}}^{-}(D(Q_{X}\|P_{X}))\geq\alpha\}.$
Therefore, 
\begin{align}
\underline{\mathrm{E}}_{X}(\alpha) & \geq\inf_{Q_{X}\neq P_{X}:\psi_{Q_{X}}^{-}(D(Q_{X}\|P_{X}))\geq\alpha}D(Q_{X}\|P_{X}).\label{eq:-37-2-2}
\end{align}

By the assumption of UCOTF and applying the triangle inequality, we
have that $\mathcal{E}(Q_{X},Q_{Y})$ is finite for all $Q_{X},Q_{Y}$.
Hence, $\psi_{Q_{X}}$ is finite on $[0,+\infty)$. Since the sublevel
sets of the relative entropy are compact, by Lemma \ref{lem:continuityofinf2},
$\psi_{Q_{X}}$ is continuous on $[0,+\infty)$, which implies that
$\psi_{Q_{X}}^{-}=\psi_{Q_{X}}$ on $[0,+\infty)$. Hence, we have
\begin{align}
\underline{\mathrm{E}}_{X}(\alpha) & \geq\inf_{Q_{X}:\psi_{Q_{X}}(D(Q_{X}\|P_{X}))\geq\alpha}D(Q_{X}\|P_{X})\ge\lim_{\alpha'\uparrow\alpha}g_{P_{X},P_{Y}}(\alpha').\label{eq:-37-2-2-1}
\end{align}
By symmetry, we obtain 
\[
\underline{\mathrm{E}}_{Y}(\alpha):=\liminf_{n\to\infty}-\frac{1}{n}\log\Upsilon_{2}\geq\lim_{\alpha'\uparrow\alpha}g_{P_{Y},P_{X}}(\alpha').
\]
Therefore, by \eqref{eqn:maxUp}, 
\[
\liminf_{n\to\infty}-\frac{1}{n}\log\mathcal{G}_{\alpha}^{(n)}(P_{X},P_{Y})\ge\lim_{\alpha'\uparrow\alpha}g(\alpha').
\]

\subsubsection{Upper Bound}

We next prove the direction of ``$\le$'' in \eqref{eq:LD2}, i.e.,
$g(\alpha)$ is an upper bound on the left side of \eqref{eq:LD2}.
For this case, \eqref{eqn:maxUp} still holds. Setting $A=B_{\le\epsilon}(Q_{X}):=\{Q_{X}':\mathtt{L}_{1}(Q_{X}',Q_{X})\le\epsilon\}$
for some fixed $Q_{X}$ and some $\epsilon>0$ and then applying Sanov's
theorem \cite[Theorem 6.2.10]{Dembo} to $\mu_{n}(A)$ and $\nu_{n}(\Gamma_{\mathcal{E}\le d}(A))$,
we obtain that 
\begin{align}
\Upsilon_{1}= & \sup_{\textrm{compact }A\subseteq\mathcal{P}(\mathcal{X}):\mu_{n}(A)\le\frac{1}{2}+\delta}\mu_{n}(A)-\nu_{n}(\Gamma_{\mathcal{E}\le\alpha}(A))\nonumber \\
 & \geq e^{-n(\inf_{Q_{X}'\in B_{\le\epsilon}(Q_{X})^{o}}D(Q_{X}'\|P_{X})+o_{n|Q_{X},\epsilon}(1))}-e^{-n(\inf_{Q_{Y}\in\overline{\Gamma_{\mathcal{E}\le\alpha}(B_{\le\epsilon}(Q_{X}))}}D(Q_{Y}\|P_{Y})+o_{n|Q_{X},\epsilon}(1))}.\label{eq:-58-2}
\end{align}
Since $Q_{X}\in B_{\le\epsilon}(Q_{X})^{o}$, we have 
\begin{equation}
\inf_{Q_{X}'\in B_{\le\epsilon}(Q_{X})^{o}}D(Q_{X}'\|P_{X})\leq D(Q_{X}\|P_{X}).\label{eq:-104}
\end{equation}
On the other hand, by the assumption of UCOTF, 
\begin{align}
\Gamma_{\mathcal{E}\le\alpha}(B_{\le\epsilon}(Q_{X})) & =\{Q_{Y}:\exists Q_{X}'\in B_{\le\epsilon}(Q_{X}),\mathcal{E}(Q_{X}',Q_{Y})\leq\alpha\}\subseteq\{Q_{Y}:\mathcal{E}(Q_{X},Q_{Y})\leq\alpha+\kappa(\epsilon)\},\label{eq:-105}
\end{align}
where $\kappa:(0,+\infty)\to(0,+\infty)$ is some positive and increasing
function such that $\lim_{\epsilon\downarrow0}\kappa(\epsilon)=0$.
By Lemma \ref{lem:OTconvexitycontinuity} (or the assumption of UCOTF),
the set at the most right-hand side above is closed. Therefore, 
\begin{equation}
\overline{\Gamma_{\mathcal{E}\le\alpha}(B_{\le\epsilon}(Q_{X}))}\subseteq\{Q_{Y}:\mathcal{E}(Q_{X},Q_{Y})\leq\alpha+\kappa(\epsilon)\}.\label{eq:-109}
\end{equation}
Substituting \eqref{eq:-104} and \eqref{eq:-109} into \eqref{eq:-58-2}
yields that 
\begin{align}
 & \Upsilon_{1}\geq e^{-n(D(Q_{X}\|P_{X})+o_{n|Q_{X},\epsilon}(1))}-e^{-n(\phi_{Q_{X}}(\alpha+\kappa(\epsilon))+o_{n|Q_{X},\epsilon}(1))}.\label{eq:-104-1}
\end{align}
where $\phi_{Q_{X}}(t):=\inf_{Q_{Y}:\mathcal{E}(Q_{X},Q_{Y})\leq t}D(Q_{Y}\|P_{Y})$
for $t\ge0$.

Now we choose 
\begin{equation}
Q_{X}\in\hat{\mathcal{Q}}:=\big\{ Q_{X}:\phi_{Q_{X}}(\alpha)>D(Q_{X}\|P_{X})\big\},\label{eqn:Q}
\end{equation}
which means that $D(Q_{X}\|P_{X})$ is finite for all $Q_{X}\in\hat{\mathcal{Q}}$.
Then given each $Q_{X}\in\hat{\mathcal{Q}}$, for all sufficiently
small $\epsilon>0$, 
\begin{equation}
\phi_{Q_{X}}(\alpha+\kappa(\epsilon))\geq D(Q_{X}\|P_{X})+\epsilon,\label{eq:-117}
\end{equation}
which follows by the right-continuity of $\phi_{Q_{X}}$; see Lemma
\ref{lem:continuityofinf2}. Fixing a pair $(Q_{X},\epsilon)$ satisfying
\eqref{eqn:Q} and \eqref{eq:-117}, and letting $n\to\infty$ in
\eqref{eq:-104-1}, we obtain 
\begin{align}
\overline{\mathrm{E}}_{X}(\alpha) & :=\limsup_{n\to\infty}-\frac{1}{n}\log\Upsilon_{1}\leq D(Q_{X}\|P_{X}).\label{eq:-58-1-1-1}
\end{align}
Since $Q_{X}\in\mathcal{Q}$ is arbitrary, we take infimum over all
$Q_{X}\in\hat{\mathcal{Q}}$. Then we obtain 
\begin{align}
\overline{\mathrm{E}}_{X}(\alpha) & \leq\inf_{Q_{X}\in\hat{\mathcal{Q}}}D(Q_{X}\|P_{X}).\label{eq:-57}
\end{align}

We now bound $\hat{\mathcal{Q}}$. By an equivalence similar to \eqref{eqn:equicont},
we have 
\begin{align}
\hat{\mathcal{Q}} & \supseteq\hat{\mathcal{Q}}_{1}:=\bigcup_{\epsilon>0}\big\{ Q_{X}:\psi_{Q_{X}}(D(Q_{X}\|P_{X})+\epsilon)>\alpha\big\},\label{eq:-72-2}
\end{align}
where $\psi_{Q_{X}}(t):=\inf_{Q_{Y}:D(Q_{Y}\|P_{Y})\leq t}\mathcal{E}(Q_{X},Q_{Y})$
for $t\ge0$. Indeed the union operation in \eqref{eq:-72-2} can
be replaced by $\lim_{\epsilon\downarrow0}$, since the set inside
the union operation is nonincreasing in $\epsilon>0$. We now remove
the union operation in \eqref{eq:-72-2}. 
We claim that 
\begin{equation}
\hat{\mathcal{Q}}_{1}=\hat{\mathcal{Q}}_{2}:=\big\{ Q_{X}:\psi_{Q_{X}}(D(Q_{X}\|P_{X}))>\alpha\big\}.\label{eq:-61}
\end{equation}
We next prove this claim.

By the monotonicity of $\psi_{Q_{X}}$, $\hat{\mathcal{Q}}_{1}\subseteq\hat{\mathcal{Q}}_{2}$.
We next prove the other direction. By Lemma \ref{lem:continuityofinf2},
given $Q_{X}$, $\psi_{Q_{X}}$ is right-continuous. Hence, for $Q_{X}\in\hat{\mathcal{Q}}_{2}$,
\begin{align*}
 & \lim_{\epsilon\downarrow0}\psi_{Q_{X}}(D(Q_{X}\|P_{X})+\epsilon)=\psi_{Q_{X}}(D(Q_{X}\|P_{X}))>\alpha.
\end{align*}
Hence, given $Q_{X}\in\hat{\mathcal{Q}}_{2}$, for all sufficiently
small $\epsilon>0$, 
\begin{align*}
\psi_{Q_{X}}(D(Q_{X}\|P_{X})+\epsilon) & >\alpha.
\end{align*}
That is, $Q_{X}\in\hat{\mathcal{Q}}_{1}$, which implies \eqref{eq:-61},
i.e., the claim above.

Combining \eqref{eq:-72-2} and \eqref{eq:-61} yields $\hat{\mathcal{Q}}_{2}\subseteq\hat{\mathcal{Q}}.$
Then, combining this with \eqref{eq:-57}, we have $\overline{\mathrm{E}}_{X}(\alpha)\leq g_{P_{X},P_{Y}}(\alpha).$
By symmetry, we obtain $\overline{\mathrm{E}}_{Y}(\alpha):=\limsup_{n\to\infty}-\frac{1}{n}\log\Upsilon_{2}\leq g_{P_{Y},P_{X}}(\alpha).$
Therefore, by \eqref{eqn:maxUp}, 
\[
\limsup_{n\to\infty}-\frac{1}{n}\log\mathcal{G}_{\alpha}^{(n)}(P_{X},P_{Y})\leq g(\alpha).
\]

\section{Proof of Theorem \protect\ref{thm:MDP}}

Before proving Theorem \protect\ref{thm:MDP}, we need introduce
two lemmas on properties of $\theta(\beta_{X},\beta_{Y})$. The first
lemma is the following which shows that $\theta(\beta_{X},\beta_{Y})$
is Lipschitz continuous on $\mathbb{S}_{X}\times\mathbb{S}_{Y}$.
The proof of this lemma is provided in Appendix \ref{sec:Proof-of-Lemma-continuity}. 
\begin{lem}
\label{LEM:CONTINUITY} The function $\theta(\beta_{X},\beta_{Y})$
is uniformly continuous on $\mathbb{S}_{X}\times\mathbb{S}_{Y}$.
More precisely, 
\begin{align}
|\theta(\beta_{X},\beta_{Y})-\theta(\beta_{X}',\beta_{Y}')| & \leq C\max\{\Vert\beta_{X}-\beta_{X}'\Vert_{\infty},\Vert\beta_{Y}-\beta_{Y}'\Vert_{\infty}\},\label{eqn:Lipcont}
\end{align}
where $C>0$ is a constant only depending on $P_{X},P_{Y},$ and $c$. 
\end{lem}
Based on Lemma \ref{LEM:CONTINUITY}, in the following lemma, we show
that the functional $(\beta_{X},\beta_{Y})\mapsto\theta(\beta_{X},\beta_{Y})$
corresponds to the directional derivative of $(Q_{X},Q_{Y})\mapsto\mathcal{E}(Q_{X},Q_{Y})$.
The proof is provided in Appendix \ref{sec:Proof-of-Lemma-setA}. 
\begin{lem}
\label{LEM:SETA} Denote $\alpha_{0}=\mathcal{E}(P_{X},P_{Y})$. Then
the following hold. 
\begin{enumerate}
\item For a pair of distributions $(Q_{X},Q_{Y})$ and a number $t>0$,
we have 
\begin{align}
\frac{\mathcal{E}(Q_{X},Q_{Y})-\alpha_{0}}{t} & \geq\theta(\beta_{X},\beta_{Y}),\label{eq:-88}
\end{align}
where $\beta_{X}:=\frac{Q_{X}-P_{X}}{t}$ and $\beta_{Y}:=\frac{Q_{Y}-P_{Y}}{t}.$ 
\item For any $(\beta_{X},\beta_{Y})$, we have 
\begin{equation}
\limsup_{t\downarrow0}\frac{\mathcal{E}(P_{X}+t\beta_{X},P_{Y}+t\beta_{Y})-\alpha_{0}}{t}\leq\theta(\beta_{X},\beta_{Y}).\label{eq:-89}
\end{equation}
Moreover, for a pair of bounded subsets $A\subseteq\mathbb{S}_{X},B\subseteq\mathbb{S}_{Y}$
(under the relative topologies), we have 
\begin{equation}
\limsup_{t\downarrow0}\sup_{\beta_{X}\in A,\beta_{Y}\in B}\Big(\frac{\mathcal{E}(P_{X}+t\beta_{X},P_{Y}+t\beta_{Y})-\alpha_{0}}{t}-\theta(\beta_{X},\beta_{Y})\Big)\leq0.\label{eq:-89-1}
\end{equation}
\end{enumerate}
\end{lem}
Note that the differentiability of $t\mapsto\mathcal{E}(P_{X}+t\beta_{X},P_{Y}+t\beta_{Y})$
at $t=0$ can be also proven by the theorems on Hadamard directional
differentiability in \cite{bonnans2013perturbation}, as done in \cite{tameling2019empirical}.
However, here we require a stronger condition, the ``uniform differentiability''
given in \eqref{eq:-88} and \eqref{eq:-89-1}. This restricts our
attention on finite alphabets. However, it is interesting to investigate
how to extend our proof to infinite alphabets, which remains to be
done in the future.

The proof of Theorem \ref{thm:MDP} is in fact almost the same as
the proof of Theorem \ref{thm:LDP}, except that the quantities $\mathcal{E}(P_{X},P_{Y})$
and $D(Q_{X}\|P_{X})$ are respectively replaced by $\theta(\beta_{X},\beta_{Y})$
and $\frac{1}{2}\sum_{x}\frac{\beta_{X}(x)^{2}}{P_{X}(x)}$. The feasibility
of the first replacement follows by Lemma \ref{LEM:SETA} and the
feasibility of the second one follows by the moderate deviation theorem
in \cite{wu1994large} or \cite[Theorem 3.7.1]{Dembo}. We omit the
detailed proof here.

\section{Proof of Theorem \protect\ref{thm:CLT}}

\label{sec:Proof-of-Theorem-CLT}

For a discrete distribution, it is uniquely determined by its probability
mass function (pmf). Moreover, a pmf can be thought of as a vector
$(P_{X}(i))_{i\in[M]}$ where $\mathcal{X}=[M]$. Hence, the empirical
measure $T_{X^{n}}$ with $X^{n}\sim P_{X}^{\otimes n}$ corresponds
a random vector $(T_{X^{n}}(i))_{i\in[M]}$. Denote $\hat{\mu}_{n}$
as the law of $\sqrt{n}\cdot(T_{X^{n}}(i)-P_{X}(i))_{i\in[M]}$. We
extend the law $\hat{\mu}_{n}$ to the space $\mathcal{M}_{1}(\mathcal{X})$
of signed measures with total measure $1$ by taking $\hat{\mu}_{n}(A)=\hat{\mu}_{n}(A\cap\mathcal{M}_{1}(\mathcal{X}))$
for measurable $A\subseteq\mathcal{M}_{1}(\mathcal{X})$. By the multivariate
central limits theorem, the distribution $\hat{\mu}_{n}$ converges
weakly to the Gaussian distribution $\Phi_{P_{X}}$ given in Section
\ref{subsec:Main-Result-3}. Similarly, denote $\hat{\nu}_{n}$ as
the law of $\sqrt{n}\cdot(T_{Y^{n}}(i)-P_{Y}(i))_{i\in[N]}$ with
$Y^{n}\sim P_{Y}^{\otimes n}$, and extend the law $\hat{\nu}_{n}$
to the space $\mathcal{M}_{1}(\mathcal{Y})$. Then, $\hat{\nu}_{n}$
converges weakly to the Gaussian distribution $\Phi_{P_{Y}}$.

\subsection{Lower bound}

Choose $A'\subseteq\mathbb{S}_{X}$ and $B'\subseteq\mathbb{S}_{Y}$
as closed sets such that $\theta(\beta_{X},\beta_{Y})>\Delta$ for
all $\beta_{X}\in A',\beta_{Y}\in B'$, and 
\begin{equation}
\Phi_{P_{X}}(A')+\Phi_{P_{Y}}(B')-1\ge\Lambda_{\Delta}(P_{X},P_{Y})-\epsilon.\label{eq:-29}
\end{equation}
We obtain that 
\begin{align}
\liminf_{n\to\infty}\mathcal{G}_{\alpha_{0}+{\Delta}/{\sqrt{n}}}^{(n)}(P_{X},P_{Y}) & =\liminf_{n\to\infty}\sup_{\substack{\textrm{closed }A\subseteq\mathcal{P}(\mathcal{X}),B\subseteq\mathcal{P}(\mathcal{Y}):\\
\mathcal{E}(Q_{X},Q_{Y})>\alpha_{0}+\frac{\Delta}{\sqrt{n}},\forall Q_{X}\in A,Q_{Y}\in B
}
}\mu_{n}(A)+\nu_{n}(B)-1\label{eqn:CLstrassen}\\
 & \geq\liminf_{n\to\infty}\sup_{\substack{\textrm{closed }A,B:\theta(\sqrt{n}(Q_{X}-P_{X}),\sqrt{n}(Q_{Y}-P_{Y}))>\Delta,\\
\forall Q_{X}\in A,Q_{Y}\in B
}
}\mu_{n}(A)+\nu_{n}(B)-1\label{eq:-106}\\
 & \geq\liminf_{n\to\infty}\hat{\mu}_{n}(A')+\hat{\nu}_{n}(B')-1\label{eqn:Aprime}\\
 & \geq\Phi_{P_{X}}(A')+\Phi_{P_{Y}}(B')-1\label{eq:-14}\\
 & \geq\Lambda_{\Delta}(P_{X},P_{Y})-\epsilon,\label{eq:-15}
\end{align}
where \eqref{eqn:CLstrassen} follows by Strassen's duality, \eqref{eq:-106}
follows by Lemma \ref{LEM:SETA}, in \eqref{eqn:Aprime} we choose
$A=P_{X}+A'/\sqrt{n}$ and $B=P_{Y}+B'/\sqrt{n}$, and \eqref{eq:-14}
follows by the multivariate central limit theorem. Since \eqref{eq:-15}
holds for any $\epsilon>0$, we have 
\begin{align*}
\liminf_{n\to\infty}\mathcal{G}_{\alpha_{0}+{\Delta}/{\sqrt{n}}}^{(n)}(P_{X},P_{Y}) & \geq\Lambda_{\Delta}(P_{X},P_{Y}).
\end{align*}

\subsection{Upper bound}

The proof of the upper bound follows steps similar to those from \eqref{eq:-95}-\eqref{eq:-96}.
Note that both $\hat{\mu}_{n}$ and $\Phi_{P_{X}}$ are concentrated
on the hyperplane $\mathbb{S}_{X}$. 
 Obviously, $\Phi_{P_{X}}$ is \emph{tight} on $\mathbb{S}_{X}$ equipped
with relative topology, i.e., for any $\epsilon>0$, there is a compact
$K_{X}\subseteq\mathbb{S}_{X}$ such that $\Phi_{P_{X}}(K_{X}^{c})\le\epsilon$.
Since $K_{X}$ is compact, for any $\delta>0$, it has a finite cover
which consists of finitely many $\delta$-radius balls $\{B_{\delta}(\beta_{X,i})\}_{i=1}^{k_{1}}$.
Similarly, for $\hat{\nu}_{n}$, there is a compact $K_{Y}\subseteq\mathbb{S}_{Y}$
such that $\Phi_{P_{Y}}(K_{Y}^{c})\le\epsilon$, which has a finite
cover consisting of finitely many $\delta$-radius balls $\{B_{\delta}(\beta_{Y,i})\}_{i=1}^{k_{2}}$.

For any measurable $A\subseteq\mathbb{S}_{X},B\subseteq\mathbb{S}_{Y}$,
\begin{align*}
\hat{\mu}_{n}(A) & \le\hat{\mu}_{n}(A\cap K_{X})+\hat{\mu}_{n}(K_{X}^{c})\\
 & \le\hat{\mu}_{n}\Big(A\cap\bigcup_{i\in[k_{1}]}B_{\le\delta}(\beta_{X,i})\Big)+\hat{\mu}_{n}(K_{X}^{c}).
\end{align*}
For the second term in the last line, $\hat{\mu}_{n}(K_{X}^{c})\to\Phi_{P_{X}}(K_{X}^{c})\le\epsilon$
as $n\to\infty$. Define 
\[
\theta_{n}(\beta_{X},\beta_{Y}):=\sqrt{n}\left(\mathcal{E}(P_{X}+\beta_{X}/\sqrt{n},P_{Y}+\beta_{Y}/\sqrt{n})-\alpha_{0}\right).
\]
Then, by Strassen's duality in \eqref{eq:nesteddual}, 
\begin{align}
 & \mathcal{G}_{\alpha_{0}+{\Delta}/{\sqrt{n}}}^{(n)}(P_{X},P_{Y})+1\nonumber \\
 & =\sup_{\substack{\textrm{closed }A\subseteq\mathcal{P}(\mathcal{X}),B\subseteq\mathcal{P}(\mathcal{Y}):\\
\mathcal{E}(Q_{X},Q_{Y})>\alpha_{0}+\Delta/\sqrt{n},\forall Q_{X}\in A,Q_{Y}\in B
}
}\mu_{n}(A)+\nu_{n}(B)\nonumber \\
 & =\sup_{\substack{\textrm{closed }A\subseteq\mathbb{S}_{X},B\subseteq\mathbb{S}_{Y}:\\
\theta_{n}(\beta_{X},\beta_{Y})>\Delta,\forall\beta_{X}\in A,\beta_{Y}\in B
}
}\hat{\mu}_{n}(A)+\hat{\nu}_{n}(B)\nonumber \\
 & \leq\sup_{\substack{\textrm{closed }A\subseteq\mathbb{S}_{X},B\subseteq\mathbb{S}_{Y}:\\
\theta_{n}(\beta_{X},\beta_{Y})>\Delta,\forall\beta_{X}\in A,\beta_{Y}\in B
}
}\hat{\mu}_{n}\Big(A\cap\bigcup_{i\in[k_{1}]}B_{\le\delta}(\beta_{X,i})\Big)+\hat{\nu}_{n}\Big(B\cap\bigcup_{i\in[k_{2}]}B_{\le\delta}(\beta_{Y,i})\Big)+\hat{\mu}_{n}(K_{X}^{c})+\hat{\nu}_{n}(K_{Y}^{c})\nonumber \\
 & =\hat{\mu}_{n}(K_{X}^{c})+\hat{\nu}_{n}(K_{Y}^{c})+\sup_{\substack{\textrm{closed }A\subseteq\bigcup_{i\in[k_{1}]}B_{\le\delta}(\beta_{X,i}),B\subseteq\bigcup_{i\in[k_{2}]}B_{\le\delta}(\beta_{Y,i}):\\
\theta_{n}(\beta_{X},\beta_{Y})>\Delta,\forall\beta_{X}\in A,\beta_{Y}\in B
}
}\hat{\mu}_{n}(A)+\hat{\nu}_{n}(B).\label{eq:-5}
\end{align}

By Lemma \ref{LEM:SETA}, for bounded subsets $A\subseteq\mathbb{S}_{X},B\subseteq\mathbb{S}_{Y}$,
$\theta_{n}(\beta_{X},\beta_{Y})\to\theta(\beta_{X},\beta_{Y})$ as
$n\to\infty$ uniformly for all $\beta_{X}\in A,\beta_{Y}\in B$.
Hence, given $\epsilon'>0$, for any sufficiently large $n$, the
supremum term in \eqref{eq:-5} is upper bounded by a variant of this
supremum term in which ``$\theta_{n}(\beta_{X},\beta_{Y})>\Delta$''
is replaced by ``$\theta(\beta_{X},\beta_{Y})>\Delta-\epsilon'$''.

For sets $A,B$, we denote $L_{1}:=\{i\in[k_{1}]:B_{\le\delta}(\beta_{X,i})\cap A\neq\emptyset\}$
and $L_{2}:=\{i\in[k_{2}]:B_{\le\delta}(\beta_{Y,i})\cap B\neq\emptyset\}$.
Then, $A\subseteq\bigcup_{i\in L_{1}}B_{\le\delta}(\beta_{X,i})$
and $B\subseteq\bigcup_{i\in L_{2}}B_{\le\delta}(\beta_{Y,i})$. Moreover,
by the uniform continuity of $\theta$ (Lemma \ref{LEM:CONTINUITY}),
$|\theta(\beta_{X},\beta_{Y})-\theta(\beta_{X}',\beta_{Y}')|<o_{\delta}(1)$,
$\forall\beta_{X}'\in B_{\le\delta}(\beta_{X}),\beta_{Y}'\in B_{\le\delta}(\beta_{Y})$.
Hence, given $\epsilon',\delta>0$, the supremum term in \eqref{eq:-5}
is further upper bounded by 
\begin{align}
 & \max_{\substack{L_{1}\subseteq[k_{1}],L_{2}\subseteq[k_{2}]:\theta(\beta_{X},\beta_{Y})>\Delta-\epsilon'-o_{\delta}(1),\\
\forall\beta_{X}\in\bigcup_{i\in L_{1}}B_{\le\delta}(\beta_{X,i}),\beta_{Y}\in\bigcup_{i\in L_{2}}B_{\le\delta}(\beta_{Y,i})
}
}\hat{\mu}_{n}\Big(\bigcup_{i\in L_{1}}B_{\le\delta}(\beta_{X,i})\Big)+\hat{\nu}_{n}\Big(\bigcup_{i\in L_{2}}B_{\le\delta}(\beta_{Y,i})\Big)\nonumber \\
 & \le\sup_{\substack{\textrm{closed }A\subseteq\mathbb{S}_{X},B\subseteq\mathbb{S}_{Y}:\\
\theta_{n}(\beta_{X},\beta_{Y})>\Delta-\epsilon'-o_{\delta}(1),\forall\beta_{X}\in A,\beta_{Y}\in B
}
}\hat{\mu}_{n}(A)+\hat{\nu}_{n}(B).\label{eq:-12}
\end{align}

Therefore, substituting the upper bound in \eqref{eq:-12} into \eqref{eq:-5},
and taking limits, we have that given $\epsilon,\epsilon',\delta>0$,
\begin{align*}
 & \limsup_{n\to\infty}\mathcal{G}_{\alpha_{0}+{\Delta}/{\sqrt{n}}}^{(n)}(P_{X},P_{Y})+1\le2\epsilon+\sup_{\substack{\textrm{closed }A\subseteq\mathbb{S}_{X},B\subseteq\mathbb{S}_{Y}:\\
\theta_{n}(\beta_{X},\beta_{Y})>\Delta-\epsilon'-o_{\delta}(1),\forall\beta_{X}\in A,\beta_{Y}\in B
}
}\Phi_{P_{X}}(A)+\Phi_{P_{Y}}(B).
\end{align*}
Letting $\epsilon,\epsilon',\delta\downarrow0$, we obtain 
\begin{align*}
\limsup_{n\to\infty}\mathcal{G}_{\alpha_{0}+{\Delta}/{\sqrt{n}}}^{(n)}(P_{X},P_{Y}) & \leq\lim_{\Delta'\uparrow\Delta}\Lambda_{\Delta'}(P_{X},P_{Y}).
\end{align*}

\appendix

\section{Basic Lemmas}

\label{sec:Basic-Lemmas}

In this section, we prove several basic lemmas for the OT problem.
These lemmas will be used to prove our main results in Sections \ref{sec:Proof-of-Lemma-Product}-\ref{sec:Proof-of-Theorem-CLT}. 
\begin{lem}
\label{lem:OTconvexitycontinuity} Let $\mathcal{X}$ and $\mathcal{Y}$
be Polish spaces. Assume that the cost function $c$ satisfies the
lower semi-continuity assumption. Then for $(P_{X},P_{Y})\in\mathcal{P}(\mathcal{X})\times\mathcal{P}(\mathcal{Y})$,
we have that $\mathcal{E}(P_{X},P_{Y})$ is convex in $(P_{X},P_{Y})$
and lower semi-continuous in $(P_{X},P_{Y})$ in the weak topology. 
\end{lem}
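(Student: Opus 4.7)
The plan is to treat convexity directly from the primal definition via the ``mixture of couplings'' construction, and to obtain lower semi-continuity either from Kantorovich duality (Lemma~\ref{lem:Kantorovich}) or by a direct compactness argument on couplings.

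For convexity, fix $\lambda\in[0,1]$ and two pairs of marginals $(P_X^0,P_Y^0),(P_X^1,P_Y^1)$. For any $\epsilon>0$, choose $\epsilon$-optimal couplings $\pi^i\in C(P_X^i,P_Y^i)$ so that $\int c\,\mathrm{d}\pi^i\le\mathcal{E}(P_X^i,P_Y^i)+\epsilon$ for $i=0,1$. Because marginals are linear in the joint measure, the convex combination $\pi:=\lambda\pi^0+(1-\lambda)\pi^1$ lies in $C(\lambda P_X^0+(1-\lambda)P_X^1,\,\lambda P_Y^0+(1-\lambda)P_Y^1)$, and
\[
\int c\,\mathrm{d}\pi=\lambda\int c\,\mathrm{d}\pi^0+(1-\lambda)\int c\,\mathrm{d}\pi^1\le\lambda\mathcal{E}(P_X^0,P_Y^0)+(1-\lambda)\mathcal{E}(P_X^1,P_Y^1)+\epsilon.
\]
Taking the infimum over couplings on the left and sending $\epsilon\downarrow0$ gives convexity.

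For lower semi-continuity, my preferred route is dual: by Lemma~\ref{lem:Kantorovich} (in its form with pointwise-everywhere constraint $f(x)+g(y)\le c(x,y)$, which still produces $\mathcal{E}$ under the lower semi-continuity assumption on $c$),
\[
\mathcal{E}(P_X,P_Y)=\sup_{\substack{(f,g)\in C_{\mathrm{b}}(\mathcal{X})\times C_{\mathrm{b}}(\mathcal{Y})\\ f(x)+g(y)\le c(x,y)\,\forall(x,y)}}\int f\,\mathrm{d}P_X+\int g\,\mathrm{d}P_Y.
\]
For each admissible $(f,g)$, the map $(P_X,P_Y)\mapsto\int f\,\mathrm{d}P_X+\int g\,\mathrm{d}P_Y$ is weakly continuous since $f,g$ are bounded continuous. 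A pointwise supremum of continuous functions is lower semi-continuous, so $\mathcal{E}$ is lower semi-continuous on $\mathcal{P}(\mathcal{X})\times\mathcal{P}(\mathcal{Y})$.

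Alternatively, a purely primal argument works: if $(P_X^n,P_Y^n)\to(P_X,P_Y)$ weakly, pass to a subsequence achieving the $\liminf$ of $\mathcal{E}(P_X^n,P_Y^n)$ and pick near-optimal couplings $\pi^n\in C(P_X^n,P_Y^n)$. Weakly convergent sequences in Polish spaces are tight, so $\{P_X^n\},\{P_Y^n\}$ are tight, which forces $\{\pi^n\}$ to be tight (standard lemma: coupling-tightness follows from marginal-tightness). Extract a further subsequence with $\pi^{n_k}\rightharpoonup\pi^\star$; continuity of marginalization ensures $\pi^\star\in C(P_X,P_Y)$, and the Portmanteau direction for non-negative lower semi-continuous integrands yields $\liminf_k\int c\,\mathrm{d}\pi^{n_k}\ge\int c\,\mathrm{d}\pi^\star\ge\mathcal{E}(P_X,P_Y)$. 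The main technical point in this route is verifying that lower semi-continuity of $c$ on $\mathcal{X}\times\mathcal{Y}$ still gives $\liminf\int c\,\mathrm{d}\pi^{n_k}\ge\int c\,\mathrm{d}\pi^\star$ without requiring $c$ to be bounded, which is handled by approximating $c$ from below by bounded continuous functions $c\wedge M$ and sending $M\to\infty$ via monotone convergence. This is the only place some care is needed; everywhere else the argument is routine.
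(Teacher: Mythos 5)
Your proposal is correct and for the lower semi-continuity part follows the same route as the paper, namely writing $\mathcal{E}$ as a supremum of weakly continuous linear functionals via Kantorovich duality. However, there is a genuine and worthwhile refinement in your version: you insist on the \emph{pointwise-everywhere} form of the dual constraint $f(x)+g(y)\le c(x,y)$, which makes the feasible set independent of the marginals. The paper, by contrast, carries the $P_{X}P_{Y}$-a.s.\ form of the constraint through the computation, so the same pair $(f,g)$ that is feasible for the limit $(P_{X},P_{Y})$ is implicitly treated as feasible for every $(P_{X}^{(n)},P_{Y}^{(n)})$ when bounding $\int f\,\mathrm{d}P_{X}^{(n)}+\int g\,\mathrm{d}P_{Y}^{(n)}\le\mathcal{E}(P_{X}^{(n)},P_{Y}^{(n)})$; that inclusion is not automatic since the null sets of $P_{X}^{(n)}P_{Y}^{(n)}$ need not match those of $P_{X}P_{Y}$. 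Your choice of the pointwise constraint removes this subtlety cleanly and is the standard fix, so on this point your write-up is actually tighter than the paper's. Your convexity argument (mixing $\epsilon$-optimal couplings) is simply the detailed version of what the paper dismisses as ``easy to verify by definition,'' and your sketched primal alternative for lower semi-continuity (tightness of couplings from tightness of marginals, followed by monotone approximation of $c$ by $c\wedge M\in C_{\mathrm b}$ and Portmanteau) is also valid and gives a self-contained route that does not rely on the duality lemma at all.
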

\begin{proof} By definition, it is easy to verify that $\mathcal{E}(P_{X},P_{Y})$
is convex in $(P_{X},P_{Y})$; see \cite[Theorem 4.8]{villani2008optimal}.
We next prove the lower semi-continuity. 
For any sequence of $\{(P_{X}^{(n)},P_{Y}^{(n)})\}$ such that $(P_{X}^{(n)},P_{Y}^{(n)})\to(P_{X},P_{Y})$
in the weak topology, we have 
\begin{align*}
\liminf_{n\to\infty}\mathcal{E}(P_{X}^{(n)},P_{Y}^{(n)}) & =\liminf_{n\to\infty}\sup_{(\phi,\psi)\in C_{\mathrm{b}}(\mathcal{X})\times C_{\mathrm{b}}(\mathcal{Y}):\phi+\psi\leq c}\int_{\mathcal{X}}\phi\mathrm{d}P_{X}^{(n)}+\int_{\mathcal{Y}}\psi\mathrm{d}P_{Y}^{(n)}\\
 & \geq\sup_{(\phi,\psi)\in C_{\mathrm{b}}(\mathcal{X})\times C_{\mathrm{b}}(\mathcal{Y}):\phi+\psi\leq c}\liminf_{n\to\infty}\int_{\mathcal{X}}\phi\mathrm{d}P_{X}^{(n)}+\int_{\mathcal{Y}}\psi\mathrm{d}P_{Y}^{(n)}\\
 & =\sup_{(\phi,\psi)\in C_{\mathrm{b}}(\mathcal{X})\times C_{\mathrm{b}}(\mathcal{Y}):\phi+\psi\leq c}\int_{\mathcal{X}}\phi\mathrm{d}P_{X}+\int_{\mathcal{Y}}\psi\mathrm{d}P_{Y}\\
 & =\mathcal{E}(P_{X},P_{Y}).
\end{align*}
\end{proof} 
\begin{lem}
\label{lem:continuityofinf} Let $\mathcal{Z}$ be a compact set in
a topological space. Let $\epsilon\in(0,+\infty)\mapsto A_{\epsilon}\subseteq\mathcal{Z}$
be a set-valued function. Assume $A_{\epsilon}$ is closed for every
$\epsilon>0$, and non-decreasing in $\epsilon$ (i.e., $A_{\epsilon}\subseteq A_{\epsilon'}$
for all $\epsilon<\epsilon'$). Let $f:\mathcal{Z}\to[0,+\infty]$
be a lower semi-continuous function. Then 
\begin{equation}
\sup_{\epsilon>0}\inf_{z\in A_{\epsilon}}f(z)=\inf_{z\in\bigcap_{\epsilon>0}A_{\epsilon}}f(z).\label{eq:-91}
\end{equation}
\end{lem}
\begin{proof} Obviously, 
\begin{equation}
\sup_{\epsilon>0}\inf_{z\in A_{\epsilon}}f(z)\leq\inf_{z\in\bigcap_{\epsilon>0}A_{\epsilon}}f(z).\label{eq:-122}
\end{equation}
Hence we only need to prove 
\begin{equation}
\sup_{\epsilon>0}\inf_{z\in A_{\epsilon}}f(z)\geq\inf_{z\in\bigcap_{\epsilon>0}A_{\epsilon}}f(z).\label{eqn:ge}
\end{equation}

By definition, both the operations ``$\sup_{\epsilon>0}$'' and
``$\bigcap_{\epsilon>0}$'' in \eqref{eq:-91} can be replaced by
``$\lim_{\epsilon\downarrow0}$''. In particular, 
\begin{equation}
\sup_{\epsilon>0}\inf_{z\in A_{\epsilon}}f(z)=\lim_{\epsilon\downarrow0}\inf_{z\in A_{\epsilon}}f(z),\label{eq:-48}
\end{equation}
since $A_{\epsilon}$ is non-decreasing in $\epsilon$. Let $\{\epsilon_{n}\}$
be a decreasing positive sequence such that $\lim_{n\to\infty}\epsilon_{n}=0$
and 
\begin{equation}
\lim_{n\to\infty}\inf_{z\in A_{\epsilon_{n}}}f(z)=\lim_{\epsilon\downarrow0}\inf_{z\in A_{\epsilon}}f(z).\label{eq:-43}
\end{equation}
Let $\delta>0$ be a positive number. We denote $\{z_{n}\in A_{\epsilon_{n}}:n\in\mathbb{N}\}$
as a sequence such that for each $n$, 
\begin{equation}
f(z_{n})\leq\inf_{z\in A_{\epsilon_{n}}}f(z)+\delta.\label{eq:-42}
\end{equation}
Since $\mathcal{Z}$ is compact, we can pass the sequence $\{z_{n}:n\in\mathbb{N}\}$
into a convergent subsequence, and assume the limit of this subsequence
is $\hat{z}\in\mathcal{Z}$. By the monotonicity and closedness of
$A_{\epsilon}$ in $\epsilon$, we have $\hat{z}\in A_{\epsilon}$
for any $\epsilon>0$, which further implies 
\begin{equation}
\hat{z}\in\bigcap_{\epsilon>0}A_{\epsilon}.\label{eq:-46}
\end{equation}
Therefore, 
\begin{align}
\sup_{\epsilon>0}\inf_{z\in A_{\epsilon}}f(z) & =\lim_{n\to\infty}\inf_{z\in A_{\epsilon_{n}}}f(z)\geq\liminf_{n\to\infty}f(z_{n})-\delta\ge f(\hat{z})-\delta\geq\inf_{z\in\bigcap_{\epsilon>0}A_{\epsilon}}f(z)-\delta,\label{eq:-47}
\end{align}
where the equality follows from \eqref{eq:-48} and \eqref{eq:-43},
the first inequality follows from \eqref{eq:-42}, the second inequality
follows by the lower semi-continuity of $f$, and the last inequality
follows from \eqref{eq:-46}. Since $\delta>0$ is arbitrary, we obtain
\eqref{eqn:ge}. \end{proof} 
\begin{lem}
\label{lem:continuityofinf2} Let $\mathcal{Z}$ be a convex set.
Let $f,g:\mathcal{Z}\to[0,+\infty]$ be convex functions. Define 
\begin{equation}
F:t\in[0,+\infty)\mapsto\inf_{z\in\mathcal{Z}:g(z)\le t}f(z).\label{eq:-25}
\end{equation}
Denote $t_{\inf}:=\inf\{t\in[0,+\infty):F(t)<+\infty\}$. Then, the
following three statements hold. 
\begin{enumerate}
\item $F$ is non-increasing and convex on $[0,+\infty)$, and continuous
on $(t_{\inf},+\infty)$. 
\item If additionally, $\mathcal{Z}$ is a compact topological space and
$f,g$ are lower semi-continuous, then $F$ is continuous on $[t_{\inf},+\infty)$. 
\item If additionally, $\mathcal{Z}$ is a topological space, $f,g$ are
lower semi-continuous, and any sublevel set of $f$ or $g$ is a compact
subset of $\mathcal{Z}$, then $F$ is continuous on $[t_{\inf},+\infty)$. 
\end{enumerate}
\end{lem}
\begin{rem}
In the second and third statements, $F$ is in fact right-continuous
on $[0,+\infty)$. 
\end{rem}
\begin{proof} We first prove the first statement. By definition,
it is easy to verify that $F$ is nonincreasing and convex on $[0,+\infty)$.
Furthermore, any convex function is continuous on any open interval
on which it is finite. Hence $F$ is continuous on $(t_{\inf},+\infty)$.

We next prove the second statement. To this end, we only need to show
that $F$ is right-continuous at $t=t_{\inf}$. For a sequence $\{t_{k}\}$
such that $t_{k}\downarrow t_{\inf}$ as $k\to\infty$ and for any
given $\delta>0$, one can find a sequence $\{z_{k}\}$ such that
$g(z_{k})\le t_{k}$ and $f(z_{k})\le F(t_{k})+\delta$. If additionally,
$\mathcal{Z}$ is compact, then we can pass $\{z_{k}\}$ to its a
convergent subsequence with the limit denoted by $\hat{z}$. For this
limit $\hat{z}$, by the lower semi-continuity of $f,g$, we have
$g(\hat{z})\le\liminf_{k\to\infty}g(z_{k})\le t_{\inf}$ and $f(\hat{z})\le\liminf_{k\to\infty}f(z_{k})\le\lim_{t\downarrow t_{\inf}}F(t)+\delta$,
which imply that $\hat{z}$ is a feasible solution for the case $t=t_{\inf}$.
Hence, $F(t_{\inf})\le\lim_{t\downarrow t_{\inf}}F(t)+\delta$. Since
$\delta>0$ is arbitrary, we have $F(t_{\inf})\le\lim_{t\downarrow t_{\inf}}F(t)$.
On the other hand, by the monotonicity of $F$, $F(t_{\inf})\ge\lim_{t\downarrow t_{\inf}}F(t)$.
Therefore, $F(t_{\inf})=\lim_{t\downarrow t_{\inf}}F(t)$, i.e., $F$
is right-continuous at $t=t_{\inf}$.

We now prove the last statement. We first assume that any sublevel
set of $f$ is a compact subset of $\mathcal{Z}$. If $\lim_{t\downarrow t_{\inf}}F(t)=+\infty$,
then by monotonicity of $F$, $F(t_{\inf})=+\infty$ and hence, $F$
is right-continuous at $t=t_{\inf}$. If $\lim_{t\downarrow t_{\inf}}F(t)<+\infty$,
then without loss of optimality, one can replace the constraint $z\in\mathcal{Z}$
with $z\in\mathcal{A}_{r}:=\{z:f(z)\le r\}$ for $r>\lim_{t\downarrow t_{\inf}}F(t)$
in the constraints in the infimization in \eqref{eq:-25}. In other
words, for any $t>t_{\inf}$, 
\[
F(t)=\inf_{z\in\mathcal{A}_{r}:g(z)\le t}f(z).
\]
Since $\mathcal{A}_{r}$ is compact, applying the second statement,
we have that $F$ is right-continuous at $t=t_{\inf}$.

We next assume that any sublevel set of $g$ is a compact subset of
$\mathcal{Z}$. Similarly to the above, we only need to consider the
case $\lim_{t\downarrow t_{\inf}}F(t)<+\infty$. For this case, without
loss of optimality, for any $t\le r$ with some $r>t_{\inf}$, one
can replace the constraint $z\in\mathcal{Z}$ with $z\in\mathcal{B}_{r}:=\{z:g(z)\le r\}$
in the constraints in the infimization in \eqref{eq:-25}. In other
words, for any $t\le r$, 
\[
F(t)=\inf_{z\in\mathcal{B}_{r}:g(z)\le t}f(z).
\]
Since $\mathcal{B}_{r}$ is compact, applying the second statement,
we have that $F$ is right-continuous at $t=t_{\inf}$. \end{proof} 
\begin{lem}
\label{lem:marginalbound} For Polish spaces $\mathcal{X}$ and $\mathcal{Y}$,
let $P_{X},Q_{X}$ be two distributions on $\mathcal{X}$, and $P_{Y},Q_{Y}$
two distributions on $\mathcal{Y}$. Then for any $Q_{XY}\in\Pi(Q_{X},Q_{Y})$,
there exists $P_{XY}\in\Pi(P_{X},P_{Y})$ such that 
\begin{equation}
\Vert P_{XY}-Q_{XY}\Vert_{\mathrm{TV}}\leq\Vert P_{X}-Q_{X}\Vert_{\mathrm{TV}}+\Vert P_{Y}-Q_{Y}\Vert_{\mathrm{TV}}.\label{eq:-81}
\end{equation}
\end{lem}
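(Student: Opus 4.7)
My plan is to construct the desired $P_{XY}$ by gluing maximal TV-couplings of the marginals against $Q_{XY}$ inside a single four-variable probability measure. First, using the coupling characterization of total variation on the Polish space $\mathcal{X}$, I would fix a \emph{maximal} coupling $\pi_X \in C(P_X, Q_X)$ on $\mathcal{X}\times\mathcal{X}$ satisfying $\pi_X\{(x,x'): x\neq x'\} = \|P_X-Q_X\|$. Explicitly, this is the measure that places $P_X\wedge Q_X$ on the diagonal and distributes the two residual masses $(P_X-Q_X)^+$ and $(Q_X-P_X)^+$ arbitrarily off it; the diagonal is closed (hence measurable) because $\mathcal{X}$ is metrizable. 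Symmetrically, choose a maximal coupling $\pi_Y\in C(P_Y,Q_Y)$ with $\pi_Y\{(y,y'): y\neq y'\}=\|P_Y-Q_Y\|$.

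Next, I would apply the gluing lemma twice. Since $\pi_X$ has second marginal $Q_X$ and $Q_{XY}$ has first marginal $Q_X$, gluing (which is valid on Polish spaces via regular conditional distributions) produces a measure on $\mathcal{X}\times\mathcal{X}\times\mathcal{Y}$ whose $(X,X')$-marginal is $\pi_X$ and whose $(X',Y')$-marginal is $Q_{XY}$. A second gluing of this measure with $\pi_Y$ along the shared marginal $Y'\sim Q_Y$ yields a probability measure $\Pi$ on $\mathcal{X}^2\times\mathcal{Y}^2$, supporting random variables $(X,X',Y',Y)$ with $(X,X')\sim\pi_X$, $(X',Y')\sim Q_{XY}$, and $(Y',Y)\sim\pi_Y$.

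Finally, let $P_{XY}$ denote the law of $(X,Y)$ under $\Pi$. Then $P_{XY}\in C(P_X,P_Y)$ by construction. For any measurable $A\subseteq\mathcal{X}\times\mathcal{Y}$, separating mass inside and outside $A$ gives
\[
P_{XY}(A)-Q_{XY}(A) = \Pi\{(X,Y)\in A,(X',Y')\notin A\} - \Pi\{(X,Y)\notin A,(X',Y')\in A\} \le \Pi\{(X,Y)\neq (X',Y')\},
\]
so taking the supremum over $A$ and applying the union bound
\[
\Pi\{(X,Y)\neq (X',Y')\} \le \Pi\{X\neq X'\}+\Pi\{Y\neq Y'\} = \|P_X-Q_X\|+\|P_Y-Q_Y\|
\]
yields \eqref{eq:-81}. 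The only delicate step is the gluing itself, which relies on the existence of regular conditional probabilities; this is guaranteed by the Polish assumption on $\mathcal{X}$ and $\mathcal{Y}$, so no additional hypothesis is needed.
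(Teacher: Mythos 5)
Your proof is correct and takes essentially the same route as the paper: glue couplings of $(P_X,Q_X)$ and $(P_Y,Q_Y)$ to $Q_{XY}$ on a common space, take the $(X,Y)$-marginal, and apply the union bound $\Pi\{(X,Y)\neq(X',Y')\}\le\Pi\{X\neq X'\}+\Pi\{Y\neq Y'\}$. The only cosmetic difference is that you fix maximal TV-couplings $\pi_X,\pi_Y$ at the outset, while the paper glues arbitrary couplings and then takes an infimum at the end, but the underlying argument is identical.
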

\begin{proof} Let $Q_{X'X}\in\Pi(P_{X},Q_{X})$ and $Q_{Y'Y}\in\Pi(P_{Y},Q_{Y})$
be two couplings. Define $Q_{X'XYY'}=Q_{X'|X}Q_{XY}Q_{Y'|Y}$. (Such
a joint distribution is well-defined, since for Polish $\mathcal{X}$
and $\mathcal{Y}$, the regular conditional distributions $Q_{X'|X}$
and $Q_{Y'|Y}$ exist.) Hence $Q_{X'Y'}\in\Pi(P_{X},P_{Y}).$

On the other hand, the joint distribution $Q_{X'XYY'}$ constructed
above satisfies 
\begin{align}
 & Q_{X'XYY'}\{(x',x,y,y'):(x,y)\neq(x',y')\}\nonumber \\
 & \leq Q_{XX'}\{(x,x'):x\neq x'\}+Q_{YY'}\{(y,y'):y\neq y'\}.\label{eq:-90}
\end{align}
Taking infimum over all $Q_{X'X}\in\Pi(P_{X},Q_{X}),Q_{Y'Y}\in\Pi(P_{Y},Q_{Y})$
for both sides of \eqref{eq:-90}, we have 
\begin{align*}
 & \Vert Q_{X'Y'}-Q_{XY}\Vert_{\mathrm{TV}}\\
 & =\inf_{P_{X'Y'XY}\in\Pi(Q_{X'Y'},Q_{XY})}P_{X'XYY'}\{(x',x,y,y'):(x,y)\neq(x',y')\}\\
 & \leq\inf_{Q_{X'X}\in\Pi(P_{X},Q_{X}),Q_{Y'Y}\in\Pi(P_{Y},Q_{Y})}Q_{X'XYY'}\{(x',x,y,y'):(x,y)\neq(x',y')\}\\
 & \leq\inf_{Q_{X'X}\in\Pi(P_{X},Q_{X})}Q_{XX'}\{(x,x'):x\neq x'\}+\inf_{Q_{Y'Y}\in\Pi(P_{Y},Q_{Y})}Q_{YY'}\{(y,y'):y\neq y'\}\\
 & =\Vert P_{X}-Q_{X}\Vert_{\mathrm{TV}}+\Vert P_{Y}-Q_{Y}\Vert_{\mathrm{TV}},
\end{align*}
where the two equalities above follow by the maximal coupling equality
given in \eqref{eqn:maxcoupling}. Hence $Q_{X'Y'}$ is a desired
distribution.

\end{proof}

\section{Proof of Lemma \protect\ref{LEM:CONTINUITY}}

\label{sec:Proof-of-Lemma-continuity}

Obviously, for $(\beta_{X},\beta_{Y}),(\beta_{X}',\beta_{Y}')\in\mathbb{S}_{X}\times\mathbb{S}_{Y}$,
we have 
\begin{align}
 & \min_{\begin{subarray}{c}
\beta_{XY}\in\overline{\Pi}(\beta_{X},\beta_{Y}),\\
\{(x,y):\beta_{XY}(x,y)<0\}\subseteq\mathcal{S}
\end{subarray}}\sum_{x,y}\beta_{XY}(x,y)c(x,y)\nonumber \\
 & \leq\min_{\begin{subarray}{c}
\beta_{XY}\in\overline{\Pi}(\beta_{X}',\beta_{Y}'),\\
\{(x,y):\beta_{XY}(x,y)<0\}\subseteq\mathcal{S}
\end{subarray}}\sum_{x,y}\beta_{XY}(x,y)c(x,y)+\min_{\begin{subarray}{c}
\beta_{XY}\in\overline{\Pi}(\beta_{X}-\beta_{X}',\beta_{Y}-\beta_{Y}'),\\
\{(x,y):\beta_{XY}(x,y)<0\}\subseteq\mathcal{S}
\end{subarray}}\sum_{x,y}\beta_{XY}(x,y)c(x,y).\label{eq:-118}
\end{align}
Observe that 
\[
C:=\sup_{\Vert\hat{\beta}_{X}\Vert_{\infty},\Vert\hat{\beta}_{Y}\Vert_{\infty}\le1}\min_{\begin{subarray}{c}
\beta_{XY}\in\overline{\Pi}(\hat{\beta}_{X},\hat{\beta}_{Y}),\\
\{(x,y):\beta_{XY}(x,y)<0\}\subseteq\mathcal{S}
\end{subarray}}\sum_{x,y}\beta_{XY}(x,y)c(x,y)
\]
satisfies that $C<+\infty$. Otherwise, $\mathcal{E}(P_{X}+\epsilon\hat{\beta}_{X},P_{Y}+\epsilon\hat{\beta}_{Y})=+\infty$
holds for any $\epsilon>0$, which is impossible since $\mathcal{E}(P_{X},P_{Y})\leq c_{\sup}=\max_{x,y}c(x,y)<+\infty$
for any $(P_{X},P_{Y})$. By the upper boundness of $C$, we have
\begin{align}
 & \min_{\begin{subarray}{c}
\beta_{XY}\in\overline{\Pi}(\beta_{X}-\beta_{X}',\beta_{Y}-\beta_{Y}'),\\
\{(x,y):\beta_{XY}(x,y)<0\}\subseteq\mathcal{S}
\end{subarray}}\sum_{x,y}\beta_{XY}(x,y)c(x,y)\leq C\max\{\Vert\beta_{X}-\beta_{X}'\Vert_{\infty},\Vert\beta_{Y}-\beta_{Y}'\Vert_{\infty}\}.\label{eq:-119}
\end{align}

Combining \eqref{eq:-118} with \eqref{eq:-119} yields 
\begin{align*}
\theta(\beta_{X},\beta_{Y}) & \leq\theta(\beta_{X}',\beta_{Y}')+C\max\{\Vert\beta_{X}-\beta_{X}'\Vert_{\infty},\Vert\beta_{Y}-\beta_{Y}'\Vert_{\infty}\}.
\end{align*}
By symmetry, we can obtain 
\begin{align*}
\theta(\beta_{X}',\beta_{Y}') & \leq\theta(\beta_{X},\beta_{Y})+C\max\{\Vert\beta_{X}-\beta_{X}'\Vert_{\infty},\Vert\beta_{Y}-\beta_{Y}'\Vert_{\infty}\}.
\end{align*}
Therefore, \eqref{eqn:Lipcont} holds. 

\section{Proof of Lemma \protect\ref{LEM:SETA}}

\label{sec:Proof-of-Lemma-setA}

We first prove \eqref{eq:-88}. Denote $Q_{XY}^{*}$ as an optimal
distribution attaining $\mathcal{E}(Q_{X},Q_{Y})$. Recall that $P_{XY}^{*}$
is an optimal distribution attaining $\mathcal{E}(P_{X},P_{Y})$ with
support $\mathcal{S}$. For such $P_{XY}^{*}$ and $Q_{XY}^{*}$,
we can write $Q_{XY}^{*}=P_{XY}^{*}+t\beta_{XY}^{*},$ where $\beta_{XY}^{*}:=\frac{Q_{XY}^{*}-P_{XY}^{*}}{t}.$
Obviously, $\beta_{XY}^{*}\in\overline{\Pi}(\beta_{X},\beta_{Y})$
and $\{(x,y):\beta_{XY}^{*}(x,y)<0\}\subseteq\mathcal{S}.$ Therefore,
\begin{align*}
\mathcal{E}(Q_{X},Q_{Y}) & =\sum_{x,y}(P_{XY}^{*}(x,y)+t\beta_{XY}^{*}(x,y))c(x,y)\,=\,\alpha_{0}+t\sum_{x,y}\beta_{XY}^{*}(x,y)c(x,y)\,\geq\,\alpha_{0}+t\theta(\beta_{X},\beta_{Y}),
\end{align*}
where the inequality above follows by the definition of the function
$\theta$ in \eqref{eqn:theta}.

Next we prove \eqref{eq:-89}. Since $\mathcal{X}$ and $\mathcal{Y}$
are respectively the supports of $P_{X}$ and $P_{Y}$, given $\beta_{X},\beta_{Y}$
and for sufficiently small $t$, the measures $P_{X}+t\beta_{X},P_{Y}+t\beta_{Y}$
are two distributions. Hence, for sufficiently small $t$, by definition,
\begin{equation}
\mathcal{E}(P_{X}+t\beta_{X},P_{Y}+t\beta_{Y})=\min_{P_{XY}\in\Pi(P_{X}+t\beta_{X},P_{Y}+t\beta_{Y})}\mathbb{E}[c(X,Y)].\label{eq:-64}
\end{equation}
For $\epsilon>0$, denote $\beta_{XY}^{*}\in\overline{\Pi}(\beta_{X},\beta_{Y})$
as a bivariate function which $\epsilon$-approximately attains $\theta(\beta_{X},\beta_{Y})$
in the sense that $\{(x,y):\beta_{XY}^{*}(x,y)<0\}\subseteq\mathcal{S}$
and $\sum_{x,y}\beta_{XY}^{*}(x,y)c(x,y)\leq\theta(\beta_{X},\beta_{Y})+\epsilon.$
Now we set $P_{XY}^{(t)}=P_{XY}^{*}+t\beta_{XY}^{*}$. Then, for sufficiently
small $t$, $P_{XY}^{(t)}$ is a distribution, and moreover, $P_{XY}^{(t)}\in\Pi(P_{X}+t\beta_{X},P_{Y}+t\beta_{Y}).$
Hence for sufficiently large $n$, 
\begin{align}
\mathcal{E}(P_{X}+t\beta_{X},P_{Y}+t\beta_{Y}) & \leq\mathbb{E}_{P_{XY}^{(t)}}[c(X,Y)]=\alpha_{0}+t\sum_{x,y}\beta_{XY}^{*}(x,y)c(x,y)\leq\alpha_{0}+t(\theta(\beta_{X},\beta_{Y})+\epsilon).\label{eqn:alpha0}
\end{align}
Since $\epsilon>0$ is arbitrary, we obtain \eqref{eq:-89}.

Furthermore, the sets $A,B$ in \eqref{eq:-89-1} are assumed to be
bounded, which means that $\|\beta_{X}\|_{\infty},\|\beta_{Y}\|_{\infty}$
are bounded on $A,B$. Let $t$ be small enough and choose $\epsilon$
fixed for all $\beta_{X}\in A,\beta_{Y}\in B$, then the proof for
\eqref{eq:-89} still works, i.e., \eqref{eqn:alpha0} holds for all
$\beta_{X}\in A,\beta_{Y}\in B$. Letting $\epsilon\downarrow0$,
we obtain \eqref{eq:-89-1}.

\section*{Acknowledgments}

The author would like to thank Vincent Y. F. Tan for his comments
on the preliminary version of the manuscript, and also thank Yonglong
Li and Bo Wei for useful discussions. The author  would like to
thank the reviewers for their suggestions which greatly have improved the readability of this paper, and also thank one of them for pointing out the references \cite{fournier2015rate,weed2019sharp}.

\bibliographystyle{plain}
\bibliography{ref}

\end{document}